\newtheorem{theorem}{Theorem}
\newtheorem{lemma}{Lemma}
\theoremstyle{definition}
\newtheorem{definition}{Definition}
\newcommand{\myvec}[1]{{\bf\text{#1}}}
\def\footnoterule{\relax%
  \kern-1pt
  \hbox to \columnwidth{\vrule width 0.5\columnwidth height 0.4pt\hfill}
  \kern4.6pt}
\DeclareMathOperator{\vecc}{vec}
\DeclareMathOperator{\Cov}{cov}
\begin{document}
\title{An Introduction to Complex Random Tensors}
\author{Divyanshu~Pandey, 
		Alexis~Decurninge,
        Harry~Leib
\thanks{Divyanshu Pandey (email: dp76@rice.edu) is with the Department of Electrical and Computer Engineering, Rice University, Houston, TX, USA. Alexis Decurninge (email: alexis.decurninge@huawei.com) is with Huawei Technologies, Boulogne-Billancourt, France. Harry Leib (email : harry.leib@mcgill.ca) is with the Department of Electrical and Computer Engineering, McGill University, Montreal, QC, Canada.}
} 
\maketitle

\vspace*{-1.5cm}

\begin{spacing}{1}
\begin{abstract}
This work considers the notion of random tensors and reviews some fundamental concepts in statistics when applied to a tensor based data or signal. In several engineering fields such as Communications, Signal Processing, Machine learning, and Control systems, the concepts of linear algebra combined with random variables have been indispensable tools. With the evolution of these subjects to multi-domain communication systems, multi-way signal processing, high dimensional data analysis, and multi-linear systems theory, there is a need to bring in multi-linear algebra equipped with the notion of random tensors. Also, since several such application areas deal with complex-valued entities, it is imperative to study this subject from a complex random tensor perspective, which is the focus of this paper. Using tools from multi-linear algebra, we characterize statistical properties of complex random tensors, both proper and improper, study various correlation structures, and fundamentals of tensor valued random processes. Furthermore, the asymptotic distribution of various tensor eigenvalue and singular value definitions is also considered, which is used for the study of spiked real tensor models that deals with recovery of low rank tensor signals perturbed by noise. This paper aims to provide an overview of the state of the art in random tensor theory of both complex and real valued tensors, for the purpose of enabling its application in engineering and applied science.
\end{abstract} 
{\em Index terms}: Tensors, Multi-way arrays, Random tensors,  Multi-variate random variables, Gaussian tensors, Spiked tensors, Tensor eigenvalues.

\section{Introduction}

Tensors are multi-way arrays which are now widely used in several engineering applications to represent data and model systems spanning more than one domain. The application of tensors was initially introduced  in Physics and Continuum Mechanics \cite{PierreComon}, and was later adopted for applications in Psychometric \cite{tucker64extension} and Chemometrics \cite{bro2006review} as well. In the past two decades, tensors have been extensively employed in signal processing applications \cite{NikosCDMA, Cichocki, MMSEJournal}. More recently, with the emergence of data driven systems, where data is inherently multi-modal in nature, the use of tensors has almost become ubiquitous \cite{cichocki2014era}. Due to its wide spread applications, tensor data analysis has garnered significant attention in the statistical learning community. 

Realizing the wide applications of tensor tools, over the past few years there have been several tutorial-style papers detailing the concepts of tensor algebra addressing directly the engineering community and their specific applications \cite{KoldaTensor,BaderTensor,NikosTensor,TensorBook2020,TensorTut2021,pandey2023linear}. However, most of these tutorials focus only on the study of the algebraic and structural properties of deterministic tensors. The notion of random tensors and their statistical properties are only discussed sparsely across these papers, for specific application and use-cases. To the best of our knowledge, there is no single document which exhaustively explores the properties of random tensors in a suitable manner. This paper provides such an overview of several random tensor concepts and properties, with the target audience being graduate students and researchers in various engineering disciplines. It is important to mention that we will not be looking at random tensor theory as studied in theoretical Physics \cite{guruau2017random} which considers tensor models to define and analyze geometries in higher dimension. In this work, we look at tensors as a higher order generalization of vectors and matrices, where the physical interpretation of its various modes can be application specific.  

Random tensors appear in several engineering areas, such as modelling signals and channels in multi-domain communication systems \cite{PandeyIMMSE}, computer vision \cite{panagakis2021tensor}, multiway image denoising \cite{muti2008lower}, big data analytics \cite{song2019tensor}, control systems \cite{MLTI2}, bio-medical signal processing \cite{zhou2016linked}, and many more. Thus extending the results from random vector and matrix theory to higher order tensors is essential for a complete statistical understanding of multi-modal data or signals in all these fields. A commonly used approach while handling multi-modal data/signals is to ignore their structure or treat them as a vector in order to apply known methods from linear algebra and random vectors theory. However, such characterization fails to capture the mutual effect of various modes of the tensor, and makes it difficult to identify low rank structures in the tensor, or find mode specific correlations. Hence, it is important to study the notions of random tensors without distorting the tensor structure. The Einstein product of tensors and its isomorphism with the space of their transformed matrices has extended several matrix algebra results to a tensor setting without restructuring the tensor \cite{TamonTensorInversion,LuEVD,pandey2023linear}. The Einstein product is a special form of tensor contraction, and can also be used to extend results from random vector/matrix theory to higher order tensors, as discussed in detail in this paper.
     
Random matrix theory was used in particular to study the asymptotic properties of the matrix spectrum with useful applications in the analysis of wireless communications systems \cite{RandomMatrixForWireless}, neural network \cite{Pennington17} or mechanical systems \cite{soize05}. The adaptation of these tools to random tensors encounter the lack of universally adopted generalization of matrix eigenvalues and singular values to higher order tensors \cite{limsingular}. Indeed, the notion of tensor spectrum is defined in various manners in the literature. Two classes of tensor eigenvalues are of particular interest, the first being strongly related to the tensor unfolding, transforming the tensors into matrices \cite{pandey2023linear} while the second considers the tensor eigenvalues as solutions of a fixed point equation \cite{limsingular}. Using different tools, both classes can be used to study models where the observations are the sum of a low-rank \textit{signal} tensor with an additive \textit{noise} tensor.
These models are often referred as spiked tensor models since the signal tensor can be seen as a spike. The analysis of the asymptotic properties of estimators of the signal has attracted recent interest and is an active area of research.
For example, Ben Arous et al. analyzed asymptotic properties of the matrix unfolding spectrum with such observations in \cite{benarous21}.
Several authors \cite{montanari14, perry20, jagannath20, Chen21} took another path leveraging spin glass theory in order to derive the quality of signal estimators assuming some prior distribution on the low-rank signal tensor.
Another line of research directly uses tools from random matrix theory in \cite{de2022random, seddik23} to study the tensor contractions involved in the fixed point equations defining the second class of eigenvalues and eigenvectors.
 
In this paper, we leverage the tools from multi-linear algebra as described in \cite{pandey2023linear} to extend the subject of random vectors to higher order arrays. The objective of this work is to provide a tutorial style presentation of the notion of random tensors starting from basics, while highlighting the key differences that the higher order structures bring forth.

The paper is organized as follows : Section \ref{Sec2} presents basic tensor algebra preliminaries. Section \ref{Sec3} presents complete characterization of complex random tensors using joint distributions, along with their first and second order moments. In particular, tensor correlation structures are presented in details. Section \ref{Sec4} considers  Gaussian tensors and discusses various specific cases of the same. Section \ref{Sec5} deals with random tensor processes. Section \ref{sec6} considers various distributions of tensor singular and eigenvalues and their asymptotic properties. Section \ref{sec7} presents the asymptotic properties of both symmetric and  asymmetric spiked random tensors. Note that the spiked random tensor properties are so far only known for real tensors, and its extension to complex tensors is a future research direction. The paper is concluded in Section \ref{sec8}.      

\section{Preliminaries and Notational Convention} \label{Sec2}
\subsection{Notations}
In this paper, deterministic vectors are represented using lowercase fonts, e.g. $\text{x}$ , deterministic matrices using uppercase fonts, e.g. $\text{X}$ and deterministic tensors of order 3 or more using uppercase calligraphic fonts, e.g. $\mathscr{X}$. Their corresponding random quantities will be denoted by bold fonts, e.g. $\myvec{\textbf{x}}$ for random vectors, $\textbf{X}$ for random matrices and $\pmb{\mathscr{X}}$ for random tensors. The individual elements of a tensor are indicated using the indices in subscript, e.g. the $(i,j,k,l)^{th}$ element of a fourth order tensor $\mathscr{X}$ is denoted by ${\mathscr{X}}_{i,j,k,l} $. A colon in subscript represents all the possible values of an index, e.g. $\mathscr{X}_{:,j,k,l}$ denotes a vector containing all entries of $\mathscr{X}$ in the first mode corresponding to $j^{th}$ second, $k^{th}$ third, and $l^{th}$ fourth mode. Similarly, $\mathscr{X}_{:,:,k,l}$ denotes a matrix slice in tensor $\mathscr{X}$ corresponding to $k^{th}$ third and $l^{th}$ fourth mode, $\mathscr{X}_{:,:,:,l}$ denotes a third order sub-tensor corresponding to $l^{th}$ fourth mode, and $\mathscr{X}_{:,:,:,:}$ is same as $\mathscr{X}$.  The $n^{th}$ element in a sequence is denoted by a superscript in parentheses, e.g. ${\mathscr{A}}^{(n)}$ denotes the $n^{th}$ tensor in a sequence of tensors. The set of complex numbers is denoted by $\mathbb{C}$ and the set of reals by $\mathbb{R}$. An all zero tensor is denoted by $0_{\mathscr{T}}$.

\subsection{Basics of tensor algebra}
Tensors, which are multi-way arrays, are used to represent systems and processes with variations across multiple modes. The number of modes is called the tensor \textit{order}. Tensors can be seen as a generalization of vectors and matrices to higher order. Thus, vectors are often referred as order 1 tensors and matrices as order 2 tensors. A tensor containing complex elements drawn from $\mathbb{C}$ is called a complex tensor, and a tensor containing real elements drawn from $\mathbb{R}$ is called a real tensor. Thus, an order-$N$ complex tensor is denoted as $\mathscr{X} \in \mathbb{C}^{I_1 \times \dots \times I_N}$ and an order-$N$ real tensor is denoted as $\mathscr{X} \in \mathbb{R}^{I_1 \times \dots \times I_N}$, where $I_n$ denotes the dimension of $n^{th}$ mode of the tensor. The well-defined notions of linear algebra which applies to vector and matrix based operations may differ significantly when applied to tensors because of multiple modes being involved. In this section we define a few basic operations associated with complex tensors which are required to understand the concepts in this paper. A more detailed review of these tensor algebra results can be found in \cite{pandey2023linear} and references within.

\subsubsection*{Tensor Transformation}
An order $N+M$ tensor $\mathscr{X} \in \mathbb{C}^{I_1 \times \dots I_N \times J_1 \times \dots \times J_M}$ can be transformed into a matrix $\text{X} \in \mathbb{C}^{I_1 \cdots I_N \times J_1 \cdots J_M}$ using the transformation function $f_{I_1,\dots,I_N|J_1,\dots,J_M}(\mathscr{X})=\text{X}$ defined as \cite{TamonTensorInversion}:
\begin{equation}\label{tranformprop}
\mathscr{X}_{i_1,i_2,\dots,i_N,j_1,j_2,\dots, j_M} \xrightarrow{f_{I_1,\dots,I_N|J_1,\dots,J_M}}\text{X}_{i_1+\sum_{k=2}^{N}(i_k-1)\prod_{l=1}^{k-1}I_l , j_1+\sum_{k=2}^{M}(j_k-1)\prod_{l=1}^{k-1}J_l }.
\end{equation}
Such a transformation is referred to as a matrix mapping of a tensor by partitioning its $N+M$ modes into two disjoint sets. This mapping is bijective \cite{TamonTensorInversion} and thus $f_{I_1,\dots,I_N|J_1,\dots,J_M}^{-1}(\text{X})=\mathscr{X}$. By exploiting the isomorphism between the matrix and tensor linear spaces, several concepts from linear algebra have been derived for tensors \cite{TamonTensorInversion, pandey2023linear}. Another commonly used matrix transformation of a tensor is mode-n unfolding where a tensor $\mathscr{A} \in \mathbb{C}^{I_1 \times \dots \times I_N}$ can be converted into a sequence of matrices $\{\text{A}^{(n)}\}_{n=1}^N$ such that for each $n$, the matrix $\text{A}^{(n)} \in \mathbb{C}^{I_n \times I_1 \cdots I_{n-1}I_{n+1}\cdots I_N}$ is formed by the the mode-n fibers of $\mathscr{A}$ taken as the columns of the resulting matrix \cite{KoldaTensor}.  

\subsubsection*{Tensor Products}
Since a tensor has multiple modes, two tensors can be multiplied across various modes. The most general form is called the tensor contraction product, and several other specific products can be defined using contraction over one or multiple modes . Some relevant products are briefly described below, more details can be found in \cite{pandey2023linear}.
\begin{itemize}
\item \textit{Contracted Product} : It is denoted by $\mathscr{Z} = \{ \mathscr{X},\mathscr{Y} \}_{\{1,\dots,M;1,\dots,M\}}$, where $\mathscr{X} \in \mathbb{C}^{I_{1} \times \dots \times I_{M} \times J_{1} \times \dots \times J_{N} }$, $\mathscr{Y} \in \mathbb{C}^{I_{1} \times \dots \times I_{M} \times K_{1} \times \dots \times K_{P} }$, and $\mathscr{Z} \in \mathbb{C}^{J_{1} \times \dots \times J_{N} \times K_{1} \times \dots \times K_{P} } $ where each element of the resulting tensor is defined as $
\mathscr{Z}_{j_1,\dots,j_N,k_1,\dots,k_P} = \sum_{i_1,\dots,i_M} \mathscr{X}_{i_{1}, \dots, i_{M}, j_{1} , \dots, j_{N}}\mathscr{Y}_{i_{1} , \dots, i_{M} , k_{1} , \dots ,k_{P}}$. In general, the contraction need not be over consecutive modes.
\item \textit{Einstein Product} : For tensors $\mathscr{A} \in \mathbb{C}^{I_1 \times \dots \times I_P \times K_1 \dots \times K_N}$ and $\mathscr{B} \in \mathbb{C}^{K_1 \times \dots \times K_N \times J_{1} \dots \times J_M}$, Einstein product is a contraction between their $N$ common modes, denoted by $*_N$, and is defined as $
(\mathscr{A}*_N \mathscr{B})_{i_1,\dots,i_P,j_{1},\dots,j_{M}} = \sum_{k_1,\dots,k_N}\mathscr{A}_{i_1,i_2,\dots,i_P,k_1,\dots,k_N}\mathscr{B}_{k_1,\dots k_N,j_{1},j_{2},\dots,j_M}$.
\item \textit{Mode-n product} : For a tensor $\mathscr{A} \in \mathbb{C}^{I_{1} \times I_{2} \times \dots \times I_{N} }$ and a matrix $\text{U} \in \mathbb{C}^{J \times I_{n}}$, mode-n product is denoted by $\mathscr{B} = \mathscr{A} {\times}_{n} \text{U}$ for $n=1,\dots,N$, and defined as $
\mathscr{B}_{i_1,i_2,\dots,i_{n-1},j,i_{n+1},\dots,i_N} = \sum_{i_n = 1}^{I_n} \mathscr{A}_{i_1,i_2,\dots,i_N} \text{U}_{j,i_n}$ where $\mathscr{B} \in \mathbb{C}^{I_1 \times I_2 \times \dots \times I_{n-1} \times J \times I_{n+1} \times \dots I_N}$.
\item \textit{Outer product} : For $\mathscr{X} \in \mathbb{C}^{I_{1} \times \dots \times I_{N} }$ and $\mathscr{Y} \in \mathbb{C}^{J_{1} \times \dots \times J_{M} }$, the outer product is denoted by $\mathscr{X} \circ \mathscr{Y} $ and it is defined as $
( \mathscr{X} \circ \mathscr{Y} )_{i_{1},i_{2},\dots,i_{N},j_{1},j_{2},\dots,j_{M}} = \mathscr{X}_{i_{1},i_{2},\dots,i_{N}} \mathscr{Y}_{j_{1},j_{2},\dots,j_{M}}$.  
\end{itemize}

Furthermore, based on the definition of the Einstein product of tensors and the transformation in \eqref{tranformprop}, it was shown in \cite{TamonTensorInversion, pandey2023linear} that for tensors $\mathscr{X} \in \mathbb{C}^{I_1 \times \dots \times I_N \times J_1 \times \dots \times J_M}$ and $\mathscr{Y} \in \mathbb{C}^{J_1 \times \dots \times J_M \times K_1 \times \dots \times K_P}$ we have, $f_{I_1,\dots,I_N|K_1,\dots,K_P}(\mathscr{X}*_M \mathscr{Y})=f_{I_1,\dots,I_N|J_1,\dots,J_M}(\mathscr{X})\cdot f_{J_1,\dots,J_M|K_1,\dots,K_P}(\mathscr{Y})$. This result helps define several concepts such as tensor inversion, Hermitian , pseudo-diagonality, identity tensor, unitary tensor, EVD, SVD, determinant, trace, and their associated properties.

\subsubsection*{Tensor Structures} Various structures in a tensor can be observed depending on its specific order and dimensions. A tensor is called \textit{cubical} if every mode has the same size, for e.g. $\mathscr{X} \in  \mathbb{C}^{I \times I \times \dots \times I}$ \cite{KoldaTensor}. Further, a tensor $\mathscr{X} \in \mathbb{C}^{I_1 \times \dots \times I_N \times J_1 \times \dots \times J_M}$ is called a \textit{square} tensor if $N=M$ and $I_k=J_k$ for $k=1,\dots,N$ \cite{TensorDet}.  Thus any even order cubical tensor is also a square tensor. A tensor $\mathscr{X} \in \mathbb{C}^{I_1 \times \dots \times I_N \times J_1 \times \dots \times J_M}$ is called \textit{pseudo-diagonal} if its matrix transformation $f_{I_1,\dots,I_N|J_1,\dots,J_M}(\mathscr{X})$ is a diagonal matrix \cite{pandey2023linear}. 

A cubical tensor is called symmetric (sometimes also referred as supersymmetric \cite{KoldaTensor}) if its elements remain constant under any permutation of the indices. For a square tensor of order $2N$, the notion of partial symmetry can be defined based on partition after $N$ modes using the definition of tensor transpose. In general, the notions of tensor transpose and Hermitian can be defined with respect to any permutation of their indices, see \cite{pan2014tensor} \cite{pandey2023linear} for details. However for the purpose of this paper, we define it for a fixed permutation with partition after $N$ modes. The \textit{Hermitian} of a tensor $\mathscr{A} \in \mathbb{C}^{I_1 \times \dots \times I_N \times J_1 \times \dots \times J_M }$ is denoted as $\mathscr{A}^H \in \mathbb{C}^{J_1 \times \dots \times J_M \times I_1 \times \dots \times I_N }$ such that $(\mathscr{A}^H)_{j_1,j_2,\dots, j_M,i_1,i_2,\dots,i_N}^*=\mathscr{A}_{i_1,i_2,\dots,i_N,j_1,j_2,\dots,j_M}$. Similarly the \textit{transpose} is defined as $(\mathscr{A}^T)_{j_1,j_2,\dots, j_M,i_1,i_2,\dots,i_N}=\mathscr{A}_{i_1,i_2,\dots,i_N,j_1,j_2,\dots,j_M}$. 

A square tensor $\mathscr{X} \in \mathbb{C}^{I_1 \times \dots \times I_N \times I_1 \times \dots \times I_N}$ (for both real and complex case) can be referred as partially symmetric if $\mathscr{X}=\mathscr{X}^T$. Furthermore, a square complex tensor is called \textit{Hermitian} if $\mathscr{X}=\mathscr{X}^H$. Also, a square tensor is called unitary if $\mathscr{X}^H*_N \mathscr{X} = \mathscr{X}*_N \mathscr{X}^H = \mathscr{I}_N$, where $\mathscr{I}_N$ denotes a pseudo-diagonal identity tensor of order $2N$. Note that the transpose or Hermitian of a tensor can be defined for any order tensor by suitably defining a mode partition. However, the structures of partial symmetry or conjugate partial symmetry (Hermitian) are defined only for square tensors.     

\subsubsection*{Tensor Decompositions}
It is important to understand the two most commonly used tensor decompositions : Canonical Polyadic (CP) and Tucker decompositions, to understand the spiked random  tensor models. The CP decomposition factorizes a tensor into a sum of component rank-one tensors. The CP decomposition of a tensor $\mathscr{X} \in \mathbb{C}^{I_{1} \times I_{2} \times \dots \times I_{N} }$ can be stated as \cite{KoldaTensor}
\begin{equation}\label{CPdecomeq}
\mathscr{X} = \sum_{r=1}^{R}\text{a}_{r}^{(1)}\circ \text{a}_{r}^{(2)}\circ\dots\circ\text{a}_{r}^{(N)} := \llbracket  \text{A}^{(1)}, \text{A}^{(2)},\dots,\text{A}^{(N)} \rrbracket
\end{equation} 
where ${\text{a}_r}^{(i)} \in {\mathbb{C}}^{I_i}$ are the column vectors of matrices $\text{A}^{(i)} \in {\mathbb{C}}^{I_i \times R}$. The notion of tensor rank is tied to CP decomposition. An order-$N$ tensor is \textit{rank-one} if it can be written as the outer product of $N$ vectors \cite{KoldaTensor}. Thus, essentially CP decomposition writes a tensor $\mathscr{X}$ as a sum of $R$ rank-one tensors where $R$ is called the CP-rank of the tensor.

The Tucker decomposition, first introduced in \cite{Tucker1966}, decomposes a tensor into a core tensor multiplied (or transformed) by a matrix along each mode. The Tucker decomposition of the $N^{th}$ order tensor $\mathscr{X} \in \mathbb{C}^{I_1 \times \dots \times I_N}$ can be written as:
\begin{equation}
\mathscr{X} = \mathscr{G} {\times}_{1} \text{A}^{(1)} {\times}_{2} \text{A}^{(2)} {\times}_{3} \dots {\times}_{N} \text{A}^{(N)}
\end{equation}
where $\mathscr{G} \in \mathbb{C}^{J_1 \times \dots \times J_N}$ is the core tensor and $\text{A}^{(n)} \in \mathbb{C}^{I_n \times J_n}$ are the factor matrices. The Tucker decomposition is also called as Higher-Order Singular Value Decomposition (HOSVD) \cite{LathauwerSVD}.

\section{Characterization of Complex Random Tensors}\label{Sec3}
The statistical properties of random tensors can be characterized through joint distributions of all their elements. In particular, while dealing with complex random tensors, a complete characterization may benefit from an augmented representation such that the correlation between the real and imaginary components of a tensor can also be analyzed. In this section, we establish the basic notions of joint distributions and second order moments associated with complex random tensors.

\subsection{Complete characterization of a complex random tensors though joint PDF} 
A tensor $\pmb{\mathscr{X}} \in \mathbb{C}^{I_1 \times \dots \times I_N}$ is said to be random if its components $\pmb{\mathscr{X}}_{i_1,\dots,i_N}$ are random variables defined on the same probability space $(\Omega, \mathcal{F},P)$. The statistics of $N$ complex random variables $\textbf{x}_n$ for $n=1,\dots,N$ are often determined by the joint cumulative distribution function/probability mass function/probability density function (CDF/PMF/PDF) of the $2N$ real random variables $\Re(\textbf{x}_n)$ and $\Im{(\textbf{x}_n)}$. Thus, a complex random vector $\myvec{x} \in \mathbb{C}^{N}$, is often denoted using its composite real representation as $\myvec{x}^{c}=\begin{bmatrix}\Re{(\myvec{x})} \\ \Im{(\myvec{x})}\end{bmatrix} \in \mathbb{R}^{2N}$. Alternately, an augmented representation is sometimes used since it keeps the vector within the complex field, and it is defined as $\myvec{x}^{a}=\begin{bmatrix}\myvec{x} \\ \myvec{x}^*\end{bmatrix} \in \mathbb{C}^{2N}$ \cite{ProperComplex, ComplexPDFBook}. 

But unlike a vector, a tensor has more than one mode. Thus, an augmented or composite representation could be introduced across any mode. Given that the primary advantage of a tensor is its ability to maintain distinction between different domains, rather than concatenating and extending a given domain, we suggest that for a complex valued tensor $\pmb{\mathscr{X}} \in \mathbb{C}^{I_1\times \dots \times I_N}$, the augmented or composite tensors can be created by adding another domain of size 2. With such representation, a composite real tensor would be defined as $\pmb{\mathscr{X}}^{c}, \in \mathbb{R}^{I_1 \times \dots \times I_N \times 2}$  where $\pmb{\mathscr{X}}^c_{i_1,\dots,i_N,1}=\Re{(\pmb{\mathscr{X}}_{i_1,\dots,i_N})}$ and $\pmb{\mathscr{X}}^c_{i_1,\dots,i_N,2}=\Im{(\pmb{\mathscr{X}}_{i_1,\dots,i_N})}$. Similarly, the augmented complex tensor can be defined as $\pmb{\mathscr{X}}^{a}, \in \mathbb{C}^{I_1 \times \dots \times I_N \times 2}$ where $\pmb{\mathscr{X}}^a_{i_1,\dots,i_N,1}=\pmb{\mathscr{X}}_{i_1,\dots,i_N}$ and $\pmb{\mathscr{X}}^a_{i_1,\dots,i_N,2}=\pmb{\mathscr{X}}_{i_1,\dots,i_N}^*$. The composite and augmented structures are shown in Fig. \ref{First_Aug_Cor} where a square is used to represent a tensor with its order indicated in a smaller box on top right corner.  

\begin{figure}[h]
\center
\includegraphics[scale=0.6]{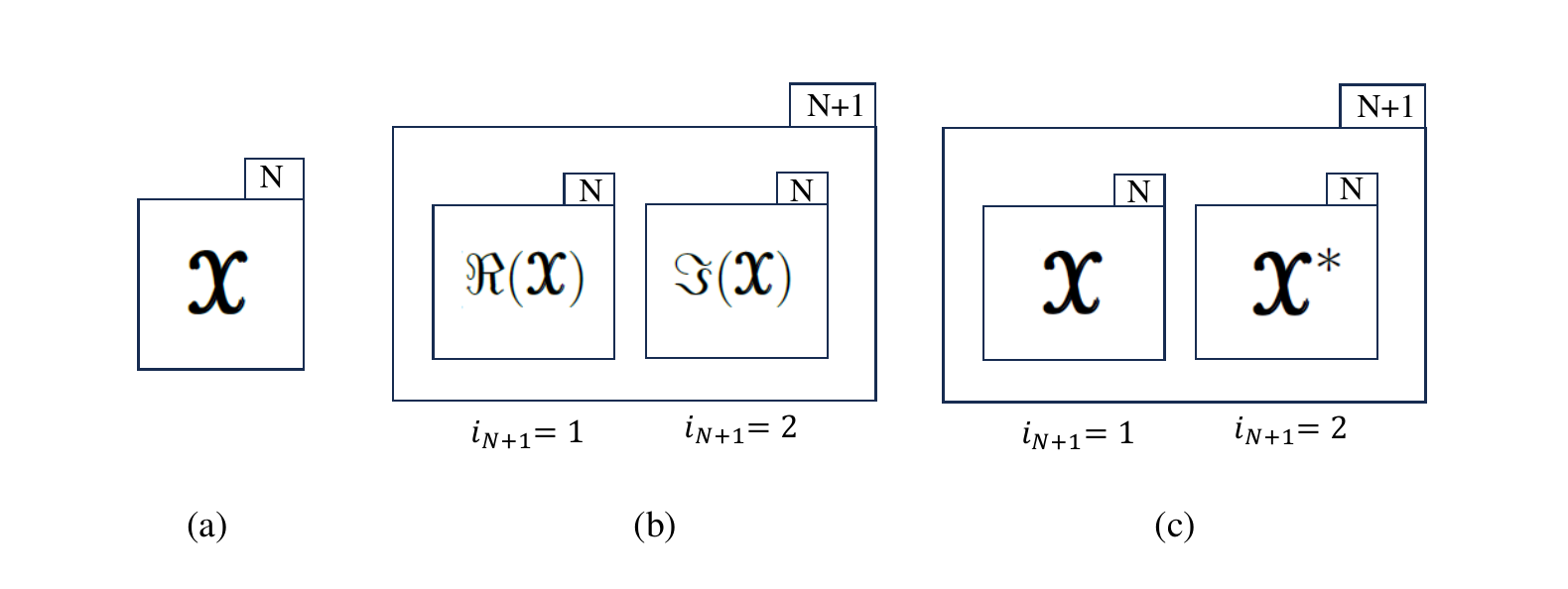}
\caption{(a) Order $N$ complex tensor, (b) Order $N+1$ real composite tensor, (c) Order $N+1$ complex augmented tensor.  \label{First_Aug_Cor}}
\end{figure}

The joint CDF of a real tensor $\pmb{\mathscr{X}} \in \mathbb{R}^{I_1 \times \dots \times I_N}$ is the joint CDF of all the elements of the tensor and can be written as:
\begin{equation}
F_{{}_{\pmb{\mathscr{X}}}}(\mathscr{X}) = Pr\left(\bigcap_{i_1,\dots,i_N} \pmb{\mathscr{X}}_{i_1,\dots,i_N} \in (-\infty,\mathscr{X}_{i_1,\dots,i_N}]\right).
\end{equation}
A random tensor is discrete if all its elements are discrete random variables, in which case its PMF is defined as:
\begin{equation}
p_{{}_{\pmb{\mathscr{X}}}}(\mathscr{X}) = Pr\left(\bigcap_{i_1,\dots,i_N} \pmb{\mathscr{X}}_{i_1,\dots,i_N} =\mathscr{X}_{i_1,\dots,i_N}\right).
\end{equation}

A random tensor is absolutely continuous if its CDF can be expressed as a multi-dimensional integral of a joint PDF of its components
\begin{equation}
F_{{}_{\pmb{\mathscr{X}}}}(\mathscr{X})=\int_{-\infty}^{ \mathscr{X}_{I_1,I_2,\dots,I_N}}\dots \int_{-\infty}^{ \mathscr{X}_{1,1,\dots,2}}\int_{-\infty}^{ \mathscr{X}_{1,1,\dots,1}}
p_{{}_{\pmb{\mathscr{X}}}}(\mathscr{U}) {d \mathscr{U}_{1,1,\dots,1}  d \mathscr{U}_{1,1,\dots,2} \dots d \mathscr{U}_{I_1,I_2,\dots,I_N}}
\end{equation}
Subsequently for any continuity point its joint PDF is given by taking the partial derivative of its CDF with respect to all the individual elements as:

\begin{equation}
p_{{}_{\pmb{\mathscr{X}}}}(\mathscr{X}) = \dfrac{\partial^{I_1\cdots I_N}F_{{}_{\pmb{\mathscr{X}}}}(\mathscr{X})}{\partial \mathscr{X}_{1,1,\dots,1}\partial \mathscr{X}_{1,1,\dots,2}\cdots \partial \mathscr{X}_{I_1,I_2,\dots,I_N}}.
\end{equation} 

In this paper, we are interested in absolutely continuous tensors. For a complex tensor $\pmb{\mathscr{X}} \in \mathbb{C}^{I_1 \times \dots \times I_N}$, its statistics can be described using the joint CDF/PDF of the real and imaginary components of all its elements. Hence either the augmented or the composite representations can be used to specify their joint CDFs and PDFs as:
\begin{equation}\label{CDF_X_xonjx}
F_{{}_{\pmb{\mathscr{X}}}}(\mathscr{X},\mathscr{X}^*) = F_{{}_{\pmb{\mathscr{X}}^c}}(\mathscr{X}^c) =F_{{}_{\Re{(\pmb{\mathscr{X}})},\Im{(\pmb{\mathscr{X}})}}}(\Re{(\mathscr{X})},\Im{(\mathscr{X})}),
\end{equation} 
\begin{equation}\label{PDF_X_xonjx}
p_{{}_{\pmb{\mathscr{X}}}}(\mathscr{X},\mathscr{X}^*) = p_{{}_{\pmb{\mathscr{X}}^c}}(\mathscr{X}^c) =p_{{}_{\Re{(\pmb{\mathscr{X}})},\Im{(\pmb{\mathscr{X}})}}}(\Re{(\mathscr{X})},\Im{(\mathscr{X})}).
\end{equation} 
The CDF and PDF as defined in \eqref{CDF_X_xonjx} and \eqref{PDF_X_xonjx} highlight that these distributions are a function of both the complex tensor and its conjugate. The composite and the augmented representations can both be used to define the statistical properties of the complex tensor. However, since the composite representation allows to remain in the complex field, it is more widely used. 

The first order moment, also known as mean or expectation, of a complex random tensor $\pmb{\mathscr{X}}$ is a tensor of same size as $\pmb{\mathscr{X}}$, and is denoted as $\mathscr{M} = \mathbb{E}[\pmb{\mathscr{X}}] = \mathbb{E}[\Re{(\pmb{\mathscr{X}})}] + j \mathbb{E}[\Im{(\pmb{\mathscr{X}})}] $ with each component consisting of the expected value of the corresponding element of $\pmb{\mathscr{X}}$. The augmented mean tensor will thus be denoted as $\mathscr{M}^a=\mathbb{E}[\pmb{\mathscr{X}}^a]$.

\subsubsection*{Characteristic Function of a random tensor}
The \textit{characteristic function} of a complex random vector $\myvec{x} \in \mathbb{C}^N$ is defined as $\Phi_{\myvec{x}}(\underline{\omega})= \mathbb{E}\big[ \exp(i\Re(\underline{\omega}^H \myvec{x}) ) \big]$ for $\underline{\omega} \in \mathbb{C}^N$\cite{andersenlinear}. Using Einstein Product, the characteristic function of a complex random tensor $\pmb{\mathscr{X}} \in \mathbb{C}^{I_1 \times \dots \times I_N}$ is 
\begin{equation}\label{characfuncdef}
\Phi_{\pmb{\mathscr{X}}}(\mathscr{W})= \mathbb{E}\big[ \exp(i\Re(\mathscr{W}^* *_N \pmb{\mathscr{X}}) ) \big]
\end{equation}
for tensor $\mathscr{W} \in \mathbb{C}^{I_1 \times \dots \times I_N}$.

\subsection{Second order characterization}

The covariance of a complex tensor $\pmb{\mathscr{X}} \in \mathbb{C}^{I_1 \times I_2 \times \dots \times I_N}$ can be defined as a tensor of size $I_1 \times I_2 \times \dots \times I_N \times I_1 \times I_2 \times \dots \times I_N$ represented by $\mathscr{Q}=\mathbb{E}[(\pmb{\mathscr{X}}-\mathscr{M})\circ(\pmb{\mathscr{X}}-\mathscr{M})^{*}]$ where $\mathscr{M}=\mathbb{E}[\pmb{\mathscr{X}}]$ is the mean tensor. However, a complete second-order characterization of complex tensors which accounts for correlation between the real and imaginary components of the tensor as well, requires defining pseudo-covariance, also known as complementary covariance \cite{ComplexPDFBook}. The pseudo-covariance of tensor $\pmb{\mathscr{X}}$ is given as $\tilde{\mathscr{Q}}=\mathbb{E}[(\pmb{\mathscr{X}}-\mathscr{M})\circ (\pmb{\mathscr{X}}-\mathscr{M})]$. The covariance tensor defined using the augmented version of $\pmb{\mathscr{X}}$ is specified by $\ddot{\mathscr{Q}}=\mathbb{E}[(\pmb{\mathscr{X}}^a-\mathscr{M}^a)\circ(\pmb{\mathscr{X}}^a-\mathscr{M}^a)^{*}]$ and is of size $I_1\times \dots \times I_N \times 2 \times I_1\times \dots \times I_N \times 2$. The augmented covariance tensor $\ddot{\mathscr{Q}}$ contains the covariance $\mathscr{Q}$ and the pseudo-covariance tensor $\tilde{\mathscr{Q}}$ along with their conjugates as $\ddot{\mathscr{Q}}_{\underbrace{:,\dots,:}_N,1,\underbrace{:,\dots,:}_N,1}=\mathscr{Q},\ddot{\mathscr{Q}}_{:,\dots,:,1,:,\dots,:,2}=\tilde{\mathscr{Q}}, \ddot{\mathscr{Q}}_{:,\dots,:,2,:,\dots,:,1}=\tilde{\mathscr{Q}}^* $ and $\ddot{\mathscr{Q}}_{:,\dots,:,2,:,\dots,:,2}=\mathscr{Q}^*$ as shown in Fig \ref{Aug_Corr}.

\begin{figure}[h]
\center
\includegraphics[scale=0.4]{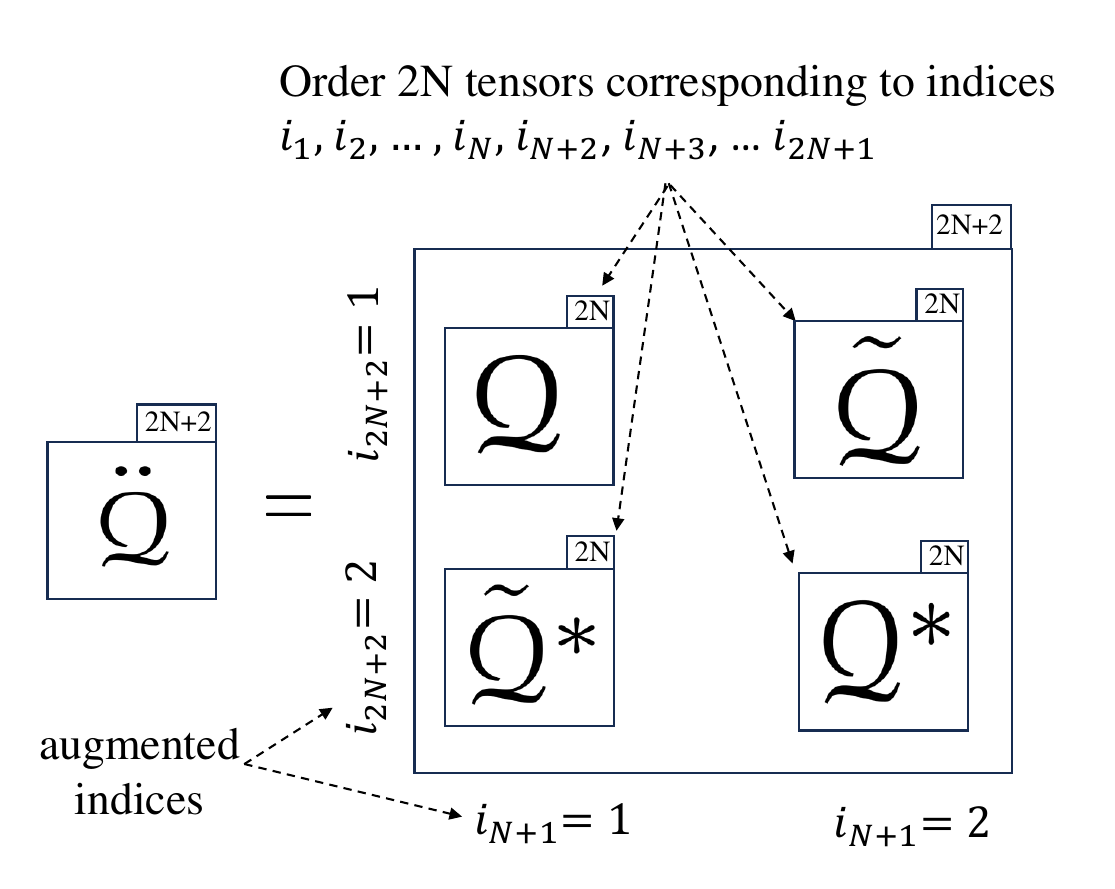}
\caption{Structure of augmented covariance tensor \label{Aug_Corr}}
\end{figure}

Similarly, cross covariance and cross pseudo-covariance between random tensors $\pmb{\mathscr{X}}$ and $\pmb{\mathscr{Y}}$ can be defined as $\mathscr{Q}_{\pmb{\mathscr{XY}}}=\mathbb{E}[(\pmb{\mathscr{X}}-\mathbb{E}[\pmb{\mathscr{X}}])\circ(\pmb{\mathscr{Y}}-\mathbb{E}[\pmb{\mathscr{Y}}])^{*}]$ and $\tilde{\mathscr{Q}}_{\pmb{\mathscr{XY}}}=\mathbb{E}[(\pmb{\mathscr{X}}-\mathbb{E}[\pmb{\mathscr{X}}])\circ(\pmb{\mathscr{Y}}-\mathbb{E}[\pmb{\mathscr{Y}}])]$ respectively. A complex random tensor is defined to be \textit{proper} if its pseudo-covariance vanishes, i.e $\tilde{\mathscr{Q}} = 0_{\mathscr{T}}$. Similarly random tensors $\pmb{\mathscr{X}}$ and $\pmb{\mathscr{Y}}$ are called \textit{cross proper} if their cross pseudo-covariance is $0_{\mathscr{T}}$.  Corresponding definitions for vectors can be found in \cite{ComplexSignalsPJS, ComplexPDFBook}.

Similar to covariance and pseudo-covariance, we can define the correlation and pseudo-correlation tensors as $\mathscr{R}=\mathbb{E}[\pmb{\mathscr{X}} \circ \pmb{\mathscr{X}}^*]$ and $\tilde{\mathscr{R}}=\mathbb{E}[\pmb{\mathscr{X}} \circ \pmb{\mathscr{X}}]$ respectively. The relation between covariance and correlation can be established as:
\begin{align}
\mathscr{Q} &= \mathbb{E}[(\pmb{\mathscr{X}}-\mathscr{M})\circ(\pmb{\mathscr{X}}-\mathscr{M})^{*}] \nonumber \\
&=\mathbb{E}[\pmb{\mathscr{X}} \circ \pmb{\mathscr{X}}^*]-\mathbb{E}[\pmb{\mathscr{X}} \circ \mathscr{M}^*] - \mathbb{E}[\mathscr{M}\circ\pmb{\mathscr{X}}^*]+ \mathbb{E}[\mathscr{M} \circ \mathscr{M}^*] \nonumber \\
&= \mathscr{R} - \mathscr{M} \circ \mathscr{M}^*.
\end{align}
Similarly, the relation between pseudo-covariance and pseudo-correlation can be shown to be:
\begin{equation}
\tilde{\mathscr{Q}} = \tilde{\mathscr{R}} - \mathscr{M} \circ \mathscr{M}.
\end{equation}
Both covariance and correlations tensors are Hermitian tensors, i.e. $\mathscr{Q} = \mathscr{Q}^H $ and $\mathscr{R} = \mathscr{R}^H $. Also, the pseudo-covariance and pseudo-correlation are partial symmetric, i.e. $\tilde{\mathscr{Q}} = \tilde{\mathscr{Q}}^T $ and $\tilde{\mathscr{R}} = \tilde{\mathscr{R}}^T $. In several engineering applications, the specific structure of the correlation tensor plays a crucial role in analyzing the statistical properties of the random tensor involved, as discussed in detail in the next section. 

\subsection{Tensor correlation structures}
The correlation of a zero mean random tensor $\pmb{\mathscr{A}} \in \mathbb{C}^{I_1 \times \dots \times I_N}$ of order $N$ is described using an order $2N$ correlation tensor of size $I_1 \times \dots \times I_N \times I_1 \times \dots \times I_N$ with each element defined as $\mathscr{R}_{i_1,\dots,i_N,i_1',\dots,i_N'}= \mathbb{E}[\pmb{\mathscr{A}}_{i_1,\dots,i_N} \cdot \pmb{\mathscr{A}}_{i_1',\dots,i_N'}^*]$. Thus the structure of $\mathscr{R}$ determines the correlation across all the elements, including elements within the same domain as well as elements across different domains. When elements of a specific domain are correlated with elements of the same domain, we will refer to it as intra-domain correlation, and when elements of a domain are correlated with elements of another domain, we will refer to it as inter-domain correlation. 

If $\pmb{\mathscr{A}}$ contains uncorrelated elements, then $\mathscr{R}$ is pseudo-diagonal. For details on pseudo-diagonal structure, refer to \cite{MDPIpaper,pandey2023linear}. In the presence of intra or inter-domain correlation, $\mathscr{R}$ contains non-zero elements on its pseudo-diagonal as well as some specific non pseudo-diagonal positions. First we consider the case with intra-domain correlation in any single mode. For such a case, we denote  the correlation tensor using the notation $\mathscr{R}^{(n)}$ where $\mathscr{R}^{(n)}$ corresponds to a correlation tensor when only the $n$th mode elements of $\pmb{\mathscr{A}}$ are correlated. Hence based on the position of non-zero elements among the indices of correlation tensor $(i_1,i_2,\dots,i_N,i_1',i_2',\dots,i_N')$ we can define different structures.

When elements are correlated only across mode-1, we have correlation tensor $\mathscr{R}^{(1)}$ such that $\mathscr{R}^{(1)}_{i_1,i_2,\dots,i_N,i_1',i_2',\dots,i_N'}$ is non-zero for any $i_1,i_1'$ and also the other indices that satisfy $i_2=i_2',i_3=i_3',\dots,i_N=i_N'$. For instance, if $\pmb{\mathscr{A}}$ is a $2 \times 2 \times 2$ tensor, and thus correlation is given by a $2 \times 2 \times 2 \times 2 \times 2 \times 2$ tensor, the non-zero elements occur at positions with $i_2=i_2',i_3=i_3'$ for all the possible values of $i_1,i_1'$. The positions are indicated in Figure \ref{Order6Corr} which illustrates an order 6 tensor. This includes all the eight pseudo-diagonal positions marked with red vertical lines where $i_1=i_1',i_2=i_2',i_3=i_3'$ which are (1,1,1,1,1,1), (1,1,2,1,1,2), (1,2,1,1,2,1), (1,2,2,1,2,2), (2,2,2,2,2,2), (2,1,1,2,1,1), (2,2,1,2,2,1), (2,1,2,2,1,2) and also the non pseudo-diagonal positions marked with green horizontal lines where $i_1 \neq i_1',i_2 = i_2',i_3=i_3'$, which are (1,1,1,2,1,1), (2,1,1,1,1,1), (1,2,1,2,2,1), (2,2,1,1,2,1), (1,1,2,2,1,2), (2,1,2,1,1,2), (1,2,2,2,2,2), (2,2,2,1,2,2).

\begin{figure}[h]
\center
\includegraphics[scale=0.7]{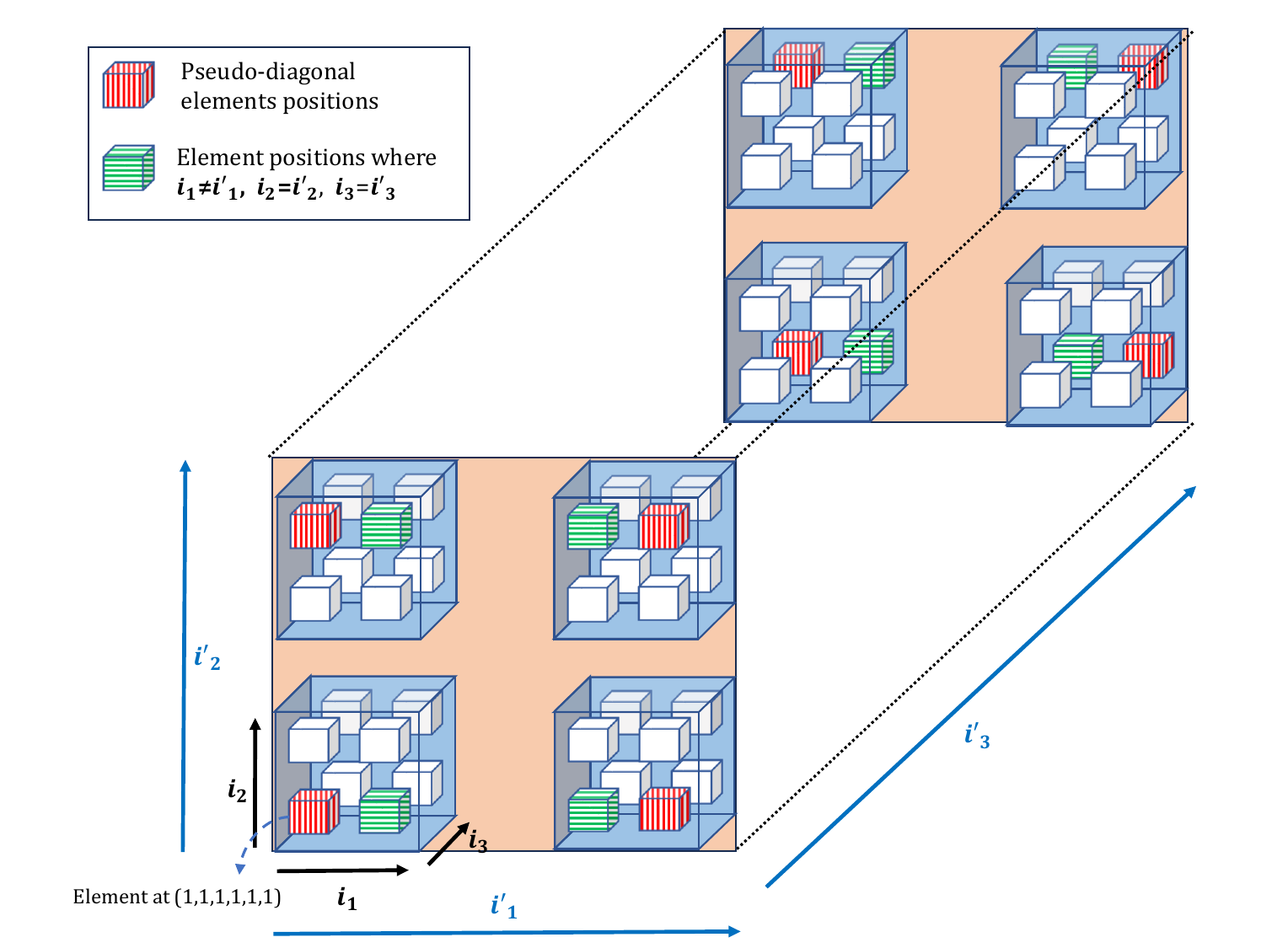}
\caption{Structure of order 6 correlation tensor when elements are correlated across mode-1 \label{Order6Corr}}
\end{figure}

Similarly with correlation in any specific mode-$n$ for $n=1,2,3,\dots,N$, we have $\mathscr{R}^{(n)}$  such that $\mathscr{R}^{(n)}_{i_1,i_2,\dots,i_N,i_1',i_2',\dots,i_N'}$ is non-zero for any $i_n,i_n'$ and also the other indices that satisfy $i_1=i_1',i_2=i_2',\dots,i_{n-1}=i_{n-1}',i_{n+1}=i_{n+1}', \dots,i_N=i_N'$. 
 
Note that any correlation tensor is a Hermitian positive semi-definite tensor and thus can be written as $\mathscr{R} = \mathscr{C} *_N \mathscr{C}^H$  where $\mathscr{C}$ is a tensor of the same size as $\mathscr{R}$ and can be seen as the square root \cite{duan2019newton} of $\mathscr{R}$, also denoted as $\mathscr{R}^{1/2}$. Since $\mathscr{R}$ is Hermitian, its tensor EVD is written as $\mathscr{R} = \mathscr{U}*_N \mathscr{D} *_N \mathscr{U}^H$ where $\mathscr{D}$ contains the real eigenvalues of $\mathscr{R}$ on its pseudo-diagonal \cite{pandey2023linear}. Thus square root of $\mathscr{R}$ is defined as $\mathscr{R}^{1/2} = \mathscr{U}*_N \mathscr{D}^{1/2} *_N \mathscr{U}^H$ where $\mathscr{D}^{1/2}$ is a pseudo-diagonal real tensor whose non zero elements are the square root of the corresponding elements of $\mathscr{D}$. Now consider a tensor $\pmb{\mathscr{B}} \in \mathbb{C}^{I_1 \times \dots \times I_N}$ with zero mean unit variance i.i.d entries such that its correlation is an order $2N$ identity tensor. We can generate an order $N$ tensor $\pmb{\mathscr{A}} \in \mathbb{C}^{I_1 \times \dots \times I_N}$ with correlation $\mathscr{R}=\mathscr{C}*_N \mathscr{C}^H$ using :
\begin{equation}\label{ARB}
\pmb{\mathscr{A}} = \mathscr{C} *_N \pmb{\mathscr{B}}.
\end{equation} 
We can verify that the correlation of $\pmb{\mathscr{A}}$ in \eqref{ARB} is indeed $\mathscr{R}$. 
\begin{align}
\mathbb{E}[\pmb{\mathscr{A}}\circ \pmb{\mathscr{A}}^*] &= \mathbb{E}[(\mathscr{C} *_N \pmb{\mathscr{B}})\circ (\mathscr{C} *_N \pmb{\mathscr{B}})^*] \\ 
&= \mathbb{E}[(\mathscr{C} *_N \pmb{\mathscr{B}})\circ (\pmb{\mathscr{B}}^* *_N \mathscr{C}^H)] \qquad \text{(using eq(22) in \cite{pandey2023linear})}\\
&=\mathscr{C} *_N \mathbb{E}[\pmb{\mathscr{B}}\circ \pmb{\mathscr{B}}^*] *_N \mathscr{C}^H \\
&=\mathscr{C} *_N \mathscr{I} *_N \mathscr{C}^H = \mathscr{R}.
\end{align}

Thus to generate a tensor with correlation in only $n$th domain such that the correlation tensor is $\mathscr{R}^{(n)} = \mathscr{C}^{(n)} *_N \mathscr{C}^{(n)H} $, we can use :
\begin{equation}
\pmb{\mathscr{A}} = \mathscr{C}^{(n)} *_N \pmb{\mathscr{B}}.
\end{equation}

Now let us consider the case where elements of two modes (assume mode-1 and mode-2) are correlated, i.e. both inter-domain and intra-domain correlation exist in mode-1 and mode-2. In this case, the  correlation tensor contains non-zero elements on its pseudo-diagonal as well as non pseudo-diagonal positions corresponding to $i_1 \neq i_1', i_2 \neq i_2'$. We denote such a correlation tensor using the notation $\mathscr{R}^{(1,2)}$ such that $\mathscr{R}^{(1,2)}_{i_1,i_2,\dots,i_N,i_1',i_2',\dots,i_N'}$ is non-zero for any $i_1,i_2,i_1',i_2'$ and where the other indices satisfy $i_3=i_3',\dots,i_N=i_N'$. In a more general case, we may have inter and intra domain correlation between any two or more domains, say mode-$k$ and mode-$m$, in which case we denote the correlation tensor using notation $\mathscr{R}^{(k,m)}$. Thus to generate a tensor $\pmb{\mathscr{A}}$ with correlation in $k$th and $m$th domain such that the correlation tensor is $\mathscr{R}^{(k,m)}$ with its square root denoted by $ \mathscr{C}^{(k,m)}$, we can use :
\begin{equation}
\pmb{\mathscr{A}} = \mathscr{C}^{(k,m)} *_N \pmb{\mathscr{B}}.
\end{equation}
Similarly the structure of the correlation tensor, depending on the position of its non-zero elements, can represent inter and intra-domain correlation across any number of domains. The structure $\mathscr{R}^{(1,2,3,\dots,N)}$ denotes the correlation of a tensor where entries across all the $N$ modes are correlated, hence all the elements of the correlation tensor are non-zero. For brevity of notation, we will drop the superscript on $\mathscr{R}^{(1,2,3,\dots,N)}$ and denote it as $\mathscr{R}$ whenever the superscript contains all the $N$ modes. 

\subsubsection{Separability of Correlation across a subset of modes}
For a matrix $\textbf{H} \in \mathbb{C}^{N_R \times N_T}$, the correlation is often represented using the Kronecker model where the correlation across the rows and columns are considered separable \cite{MIMOHaykin}. Such a correlation model is commonly used in modelling MIMO communication systems where $\textbf{H} \in \mathbb{C}^{N_R \times N_T}$ represents the channel matrix between an input/transmit side of size $N_T \times 1$ and an output/receive side of size $N_R \times 1$. The receive correlation matrix is defined as $\Psi_{R} = \mathbb{E}[\textbf{H} \textbf{H}^H]$ and the transmit correlation matrix as $\Psi_{T} = \mathbb{E}[\textbf{H}^T \textbf{H}^*]$. Correlation separability across the two domains (transmit and receiver antennas) implies that the correlation matrix of the vectorized channel, $\vecc(\textbf{H})$ is given by $\Psi_T \otimes \Psi_R$, where $\otimes$ denotes the Kronecker product. Note that the receive correlation matrix can also be written as $\Psi_R = \sum_{n=1}^{N_T}\mathbb{E}[\textbf{H}_{:,n}\textbf{H}_{:,n}^H]$, where $\textbf{H}_{:,n}$ is the vector channel between $n^{th}$ transmit antenna and all the receive antennas. Separable correlation assumes that every row vector of the matrix $\textbf{H}$ has same correlation matrix, i.e. $\mathbb{E}[\textbf{H}_{:,n}\textbf{H}_{:,n}^H]$ does not depend on $n$. Similarly, the transmit correlation matrix can be written as $\Psi_T = \sum_{m=1}^{N_R}\mathbb{E}[\textbf{H}_{m,:}\textbf{H}_{m,:}^H]$, where $\textbf{H}_{m,:}$ is the vector channel between $m^{th}$ receive antenna and all the transmit antennas. Separable correlation implies that $\mathbb{E}[\textbf{H}_{m,:}\textbf{H}_{m,:}^H]$ does not depend on $m$, i.e. every column vector of the matrix has same correlation matrix \cite{kermoal2002a, Kai2004modeling, yu2002models}. Hence under separable correlation, the receive correlation matrix is the correlation across all the receivers for a fixed transmitter, upto a scaling factor. Similarly the transmit correlation matrix is the correlation across all the transmitters for a fixed receiver. The separable correlation model may not be very accurate in all scenarios, but is still widely used because of its tractable analytical form, see for example 
\cite{CapacityCorrelatedMIMO,fu2020ber,he2018model,9234486}. 

Since a tensor has multiple modes, the separability of correlation can exist between all or a subset of the modes as well. For instance, consider a tensor channel of order $N+M$ defined as $\pmb{\mathscr{H}} \in \mathbb{C}^{J_1 \times \dots \times J_M \times I_1 \times \dots I_N}$ which links an input $\pmb{\mathscr{X}} \in \mathbb{C}^{I_1 \times \dots \times I_N}$ with an output $\pmb{\mathscr{Y}} \in \mathbb{C}^{J_1 \times \dots \times J_M}$ using the Einstein product as $\pmb{\mathscr{Y}} = \pmb{\mathscr{H}} *_N \pmb{\mathscr{X}}$. It is possible that not all the $N+M$ domains of the channel tensor are separable, but the correlation across first $M$ modes corresponding to receive side is separable from the last $N$ modes corresponding to transmit side. To understand separability of correlation in tensors, we first define the notion of transmit and receiver correlation tensors. An element in the output tensor indexed by a fixed sequence of indices say $(\bar{j}_1,\dots,\bar{j}_M)$, is a linear combination of all the elements of the input tensor where the coefficients of the combination are given by the sub-tensor $\pmb{\mathscr{H}}_{\bar{j}_1,\dots,\bar{j}_M,:,:,\dots,:}$. We denote such a sub-tensor as $\pmb{\mathscr{H}}_T^{(\bar{j}_1,\dots,\bar{j}_M)} \in \mathbb{C}^{I_1 \times \dots \times I_N}$. Hence if the elements of such a channel sub-tensor are correlated, then it will introduce correlation in the input tensor elements as seen at the $(\bar{j}_1,\dots,\bar{j}_M)$th receive element. Subsequently the transmit correlation tensor for a fixed receive element at index $(\bar{j}_1,\dots,\bar{j}_M)$ can be defined as a tensor of order $2N$ :
\begin{equation}\label{G_T_2}
\mathscr{G}_{T}^{(\bar{j}_1,\dots,\bar{j}_M)} = \mathbb{E}[\pmb{\mathscr{H}}_T^{(\bar{j}_1,\dots,\bar{j}_M)} \circ \pmb{\mathscr{H}}_T^{(\bar{j}_1,\dots,\bar{j}_M)*} ]
\end{equation} 
of size $I_1 \times \dots \times I_N \times I_1 \times \dots \times I_N$, which can be written in element wise form as:
\begin{equation}\label{G_T_3}
(\mathscr{G}_{T}^{(\bar{j}_1,\dots,\bar{j}_M)})_{i_1,\dots,i_N,i_1',\dots,i_N'} = \mathbb{E}[\pmb{\mathscr{H}}_{\bar{j}_1,\dots,\bar{j}_M,i_1,\dots,i_N} \cdot \pmb{\mathscr{H}}_{\bar{j}_1,\dots,\bar{j}_M,i_1',\dots,i_N'}^* ].
\end{equation}
Similarly, the channel between any fixed input tensor element at index $(\bar{i}_1,\dots,\bar{i}_N)$ and all the receive elements is the sub-tensor $\pmb{\mathscr{H}}_{:,\dots,:,\bar{i}_1,\dots,\bar{i}_N}$. We denote such a sub-tensor as $\pmb{\mathscr{H}}_R^{(\bar{i}_1,\dots,\bar{i}_N)} \in \mathbb{C}^{J_1 \times \dots \times J_M}$. Hence the receive correlation tensor for a fixed transmit element at index $(\bar{i}_1,\dots,\bar{i}_N)$ can be defined as a tensor of order $2M$: 
\begin{equation}\label{G_R_2}
\mathscr{G}_{R}^{(\bar{i}_1,\dots,\bar{i}_N)} = \mathbb{E}[\pmb{\mathscr{H}}_R^{(\bar{i}_1,\dots,\bar{i}_N)} \circ \pmb{\mathscr{H}}_R^{(\bar{i}_1,\dots,\bar{i}_N)*} ]
\end{equation} 
of size $J_1 \times \dots \times J_M \times J_1 \times \dots \times J_M$, which can be written in element wise form as :
\begin{equation}\label{G_R_3}
(\mathscr{G}_{R}^{(\bar{i}_1,\dots,\bar{i}_N)})_{j_1,\dots,j_M,j_1',\dots,j_M'} = \mathbb{E}[\pmb{\mathscr{H}}_{j_1,\dots,j_M,\bar{i}_1,\dots,\bar{i}_N} \cdot \pmb{\mathscr{H}}_{j_1',\dots,j_M',\bar{i}_1,\dots,\bar{i}_N}^* ].
\end{equation}

If $\mathscr{G}_R^{(\bar{i}_1,\dots,\bar{i}_N)}$ does not depend on $(\bar{i}_1,\dots,\bar{i}_N)$, we say that receive correlation tensor is uniform, or same for all the transmit domains. In such a case for notational convenience, we drop the superscript on $\mathscr{G}_R^{(i_1,\dots,i_N)}$ and denote it as $\mathscr{G}_R$.  Similarly if $\mathscr{G}_T^{(\bar{j}_1,\dots,\bar{j}_M)}$ does not depend on $(\bar{j}_1,\dots,\bar{j}_M)$, we say that transmit correlation tensor is uniform for all receive domains and denote it as $\mathscr{G}_R$. Thus with uniformity of correlation, using the simplified notation we have:
\begin{align}\label{G_R_1}
(\mathscr{G}_{R}^{(\bar{i}_1,\dots,\bar{i}_N)})_{j_1,\dots,j_M,j_1',\dots,j_M'} &= (\mathscr{G}_{R})_{j_1,\dots,j_M,j_1',\dots,j_M'} \quad \forall (\bar{i}_1,\dots,\bar{i}_N), \\ \label{G_T_1}
(\mathscr{G}_{T}^{(\bar{j}_1,\dots,\bar{j}_M)})_{i_1,\dots,i_N,i_1',\dots,i_N'} &= (\mathscr{G}_{T})_{i_1,\dots,i_N,i_1',\dots,i_N'} \quad \forall (\bar{j}_1,\dots,\bar{j}_M).
\end{align} 

In such a case correlation across the transmit and receive domains is said to be separable, since the overall channel correlation tensor can be expressed in terms of $\mathscr{G}_R$ and $\mathscr{G}_T$ as explained in the following lemma:

\begin{lemma}\label{lemma1}
For channel $\pmb{\mathscr{H}} \in \mathbb{C}^{J_1 \times \dots \times J_M \times I_1 \times \dots \times I_N }$ with zero mean unit variance circular Gaussian entries, if the elements of the correlation tensor of the channel are given by :
\begin{equation}\label{SepcortenTh}
\mathscr{R}_{j_1,\dots,j_M,i_1,\dots,i_N,j_1',\dots,j_M',i_1',\dots,i_N'} = (\mathscr{G}_{R})_{j_1,\dots,j_M,j_1',\dots,j_M'} \cdot (\mathscr{G}_{T})_{ i_1,\dots,i_N,i_1',\dots,i_N'}  
\end{equation} 
then the transmit correlation tensor given by $\mathscr{G}_T$ is uniform for all receive domains and the receive correlation tensor given by $\mathscr{G}_R$ is uniform for all transmit domains.
\end{lemma}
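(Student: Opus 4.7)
The plan is to verify uniformity by unpacking the definitions of $\mathscr{G}_T^{(\bar{j}_1,\dots,\bar{j}_M)}$ and $\mathscr{G}_R^{(\bar{i}_1,\dots,\bar{i}_N)}$ with the separability assumption plugged in, and then to invoke the unit variance condition on the channel entries to kill off the residual dependence on the indices we need to sweep over.

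First, I would set the primed and unprimed receive indices in the elementwise formula for $\mathscr{R}$ both equal to $(\bar{j}_1,\dots,\bar{j}_M)$, and compare the result with the definition of $\mathscr{G}_T^{(\bar{j}_1,\dots,\bar{j}_M)}$. This yields
\begin{equation}
(\mathscr{G}_T^{(\bar{j}_1,\dots,\bar{j}_M)})_{i_1,\dots,i_N,i_1',\dots,i_N'} = (\mathscr{G}_R)_{\bar{j}_1,\dots,\bar{j}_M,\bar{j}_1,\dots,\bar{j}_M}\,(\mathscr{G}_T)_{i_1,\dots,i_N,i_1',\dots,i_N'}.
\end{equation}
Hence the only possible dependence of $\mathscr{G}_T^{(\bar{j})}$ on $(\bar{j}_1,\dots,\bar{j}_M)$ is through the scalar pseudo-diagonal entry $(\mathscr{G}_R)_{\bar{j}_1,\dots,\bar{j}_M,\bar{j}_1,\dots,\bar{j}_M}$.

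Second, I would apply the unit variance condition. Since $\mathbb{E}[|\pmb{\mathscr{H}}_{j_1,\dots,j_M,i_1,\dots,i_N}|^2]=1$ for every index tuple, the separable form of the hypothesis forces
\begin{equation}
(\mathscr{G}_R)_{j_1,\dots,j_M,j_1,\dots,j_M}\,(\mathscr{G}_T)_{i_1,\dots,i_N,i_1,\dots,i_N}=1
\end{equation}
for all admissible $(j,i)$. Fixing any one $i$-tuple shows that the pseudo-diagonal of $\mathscr{G}_R$ is a constant $c_R$; symmetrically, the pseudo-diagonal of $\mathscr{G}_T$ is a constant $c_T$ with $c_R c_T = 1$. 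Because the factorization of $\mathscr{R}$ as $\mathscr{G}_R\cdot\mathscr{G}_T$ in the sense of the hypothesis has a one-dimensional scalar ambiguity, I would rescale $\mathscr{G}_R\to c_R^{-1}\mathscr{G}_R$ and $\mathscr{G}_T\to c_R\mathscr{G}_T$ to take $c_R = c_T = 1$ without loss of generality.

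Combining these two steps, the scalar on the right-hand side of the first display becomes $1$, so $(\mathscr{G}_T^{(\bar{j}_1,\dots,\bar{j}_M)})_{i_1,\dots,i_N,i_1',\dots,i_N'} = (\mathscr{G}_T)_{i_1,\dots,i_N,i_1',\dots,i_N'}$ independently of $(\bar{j}_1,\dots,\bar{j}_M)$, which is precisely the uniformity statement for $\mathscr{G}_T$. The dual calculation, starting from the definition of $\mathscr{G}_R^{(\bar{i}_1,\dots,\bar{i}_N)}$ and setting $i_k = i_k' = \bar{i}_k$, gives uniformity for $\mathscr{G}_R$ by symmetry. The argument is essentially a direct unfolding of definitions; the only mild subtlety is the scalar normalization freedom between $\mathscr{G}_R$ and $\mathscr{G}_T$, which is what the unit-variance condition is actually pinning down. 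I would also note in passing that the zero-mean circular Gaussian hypothesis is not used, as only second-moment information enters the argument.
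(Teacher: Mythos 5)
Your proof is correct, and it reaches the same two key identities as the paper --- namely
$(\mathscr{G}_T^{(\bar{j}_1,\dots,\bar{j}_M)})_{i_1,\dots,i_N,i_1',\dots,i_N'} = (\mathscr{G}_R)_{\bar{j}_1,\dots,\bar{j}_M,\bar{j}_1,\dots,\bar{j}_M}\,(\mathscr{G}_T)_{i_1,\dots,i_N,i_1',\dots,i_N'}$ and its dual --- but by a genuinely shorter route. The paper obtains the joint second moments $\mathbb{E}[\pmb{\mathscr{H}}_{j_1,\dots,j_M,i_1,\dots,i_N}\,\pmb{\mathscr{H}}_{j_1',\dots,j_M',i_1',\dots,i_N'}^*]$ by writing down the Gaussian characteristic function $\Phi_{\pmb{\mathscr{H}}}(\mathscr{W})=\exp\{-\tfrac14 \mathscr{W}^* *_{N+M}\mathscr{R}*_{N+M}\mathscr{W}\}$, introducing complex (Wirtinger-type) tensor gradients, and evaluating the second derivative at $\mathscr{W}=0_{\mathscr{T}}$; you instead read the same moments directly off the definition of the correlation tensor and the separability hypothesis. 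Since for a zero-mean tensor the correlation tensor \emph{is} the collection of these second moments, the characteristic-function detour only re-derives the hypothesis, so nothing is lost in your shortcut --- and, as you correctly observe, it exposes that Gaussianity plays no role (only zero mean and the second-order structure enter). Your explicit treatment of the scalar normalization ambiguity between $\mathscr{G}_R$ and $\mathscr{G}_T$ is also a small improvement: the paper asserts that the pseudo-diagonal entries of the factor $\mathscr{G}_R$ equal $1$, whereas the unit-variance condition by itself only forces $(\mathscr{G}_R)_{j_1,\dots,j_M,j_1,\dots,j_M}\,(\mathscr{G}_T)_{i_1,\dots,i_N,i_1,\dots,i_N}=1$, i.e.\ constant pseudo-diagonals with reciprocal values; your rescaling argument (or simply the observation that a constant pseudo-diagonal already yields independence of $(\bar{j}_1,\dots,\bar{j}_M)$) closes that gap cleanly. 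What the paper's heavier machinery buys is pedagogical: it introduces tensor-valued complex gradients and characteristic functions that are reused elsewhere, but it is not needed for this lemma.
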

\begin{proof}
See Appendix
\end{proof}

\subsubsection{Issues with the  Matrix Kronecker Model}
The  Kronecker model has been commonly employed for MIMO channel modeling following \cite{kermoal2002a} which unfortunately contains some inaccuracies.
The work in \cite{kermoal2002a}  implies that for a MIMO channel matrix $\textbf{H}$ if the transmit and receive correlation matrices satisfy a uniformity property, i.e. $\text{R}_R^{(n)} = \mathbb{E}[\textbf{H}_{:,n}\textbf{H}_{:,n}^H]$ does not depend on $n$, and also  
$\text{R}_T^{(m)} = \mathbb{E}[\textbf{H}_{m,:}\textbf{H}_{m,:}^H]$ does not depend on $m$, then the correlation of the vectorized channel $\vecc(\textbf{H})$ is given as $\text{R}_{MIMO} = \text{R}_T \otimes \text{R}_R$.  The statement is true only the other way around. If the correlation  of the vectorized channel $\text{R}_{MIMO}$ is given as $\text{R}_T \otimes \text{R}_R$, then the transmit and receive correlation matrices satisfy the uniformity property. The proof of this confusing statement in \cite{kermoal2002a} considers a $2 \times 2$ matrix example, but in its equation (33), it assumes what is supposed to be proven to begin with. A simple counter example is provided to show that just because $\mathbb{E}[\textbf{H}_{:,n}\textbf{H}_{:,n}^H]$ does not depend on $n$ and
$\mathbb{E}[\textbf{H}_{m,:}\textbf{H}_{m,:}^H]$ does not depend on $m$, this does not necessarily imply that the correlation of the vectorized channel is given in terms of the Kronecker product.

Consider the following channel :
\begin{equation}
\textbf{H}=
\begin{bmatrix}
h_{11} & h_{12} \\
h_{21} & h_{22}
\end{bmatrix}
\end{equation}
Now lets consider the following specific example for the correlation matrix $\text{R}_{MIMO}$ :
\begin{equation}\label{counterexMat}
\text{R}_{MIMO} = 
\begin{bmatrix}
\mathbb{E}[h_{11}h_{11}^*] & \mathbb{E}[h_{11}h_{21}^*] & \mathbb{E}[h_{11}h_{12}^*] & \mathbb{E}[h_{11}h_{22}^*] \\
\mathbb{E}[h_{21}h_{11}^*] & \mathbb{E}[h_{21}h_{21}^*] & \mathbb{E}[h_{21}h_{12}^*] & \mathbb{E}[h_{21}h_{22}^*] \\
\mathbb{E}[h_{12}h_{11}^*] & \mathbb{E}[h_{12}h_{21}^*] & \mathbb{E}[h_{12}h_{12}^*] & \mathbb{E}[h_{12}h_{22}^*] \\
\mathbb{E}[h_{22}h_{11}^*] & \mathbb{E}[h_{22}h_{21}^*] & \mathbb{E}[h_{22}h_{12}^*] & \mathbb{E}[h_{22}h_{22}^*] \\
\end{bmatrix}
=
\begin{bmatrix}
1 & \rho & \mu & \nu \\
\rho^* & 1 & \gamma & \mu \\
\mu^* & \gamma^* & 1 & \rho \\
\nu^* & \mu^* & \rho^* & 1 \\  
\end{bmatrix}
\end{equation}
This example is a Hermitian matrix with all diagonal elements that are positive, but to ensure that this can represent a correlation matrix, specific range of values for the variables $\rho, \mu, \nu, \gamma$ need to be specified.  If a symmetric matrix is strictly row diagonally dominant and has strictly positive diagonal entries, then it is positive definite \cite[Page 549]{PosDetermBook}. A matrix $\text{A}$ of size $n \times n$ is called strictly row diagonally dominant if $|A_{i,i}| > \sum_{j=1,j\neq i}^n |\text{A}_{i,j}|$ for all $i=1,\dots,n$ \cite{PosDetermBook}.  Hence for   \eqref{counterexMat} any choice of parameters $\rho, \mu, \nu, \gamma$ which satisfy
\begin{align}
|\rho|+|\mu|+|\nu| &< 1 \\
|\rho|+|\gamma|+|\mu| &< 1
\end{align}
will yield a positive definite matrix $\text{R}_{MIMO}$.

Reference \cite{kermoal2002a} in its equation (33) assumes $\nu=\rho \mu$ and $\gamma = \rho \mu^*$ to begin with, for its proof. But that is not correct. Even if $\nu \neq \rho \mu$, and $\gamma \neq \rho \mu^*$ we may still have that the transmit and receive correlation satisfy the uniformity property as shown next:
\begin{equation}
\text{R}_R^{(1)} = \mathbb{E}[\textbf{H}_{:,1}\textbf{H}_{:,1}^H] = 
\begin{bmatrix}
\mathbb{E}[h_{11}h_{11}^*] & \mathbb{E}[h_{11}h_{21}^*] \\
\mathbb{E}[h_{21}h_{11}^*] & \mathbb{E}[h_{21}h_{21}^*] 
\end{bmatrix}
=
\begin{bmatrix}
1 & \rho\\
\rho^* & 1
\end{bmatrix}
\end{equation}
\begin{equation}
\text{R}_R^{(2)} = \mathbb{E}[\textbf{H}_{:,2}\textbf{H}_{:,2}^H] = 
\begin{bmatrix}
\mathbb{E}[h_{12}h_{12}^*] & \mathbb{E}[h_{12}h_{22}^*] \\
\mathbb{E}[h_{22}h_{12}^*] & \mathbb{E}[h_{22}h_{22}^*] 
\end{bmatrix}
=
\begin{bmatrix}
1 & \rho \\
\rho^* & 1
\end{bmatrix}
\end{equation}  
which shows that $\text{R}_R^{(1)}$ is same as $\text{R}_R^{(2)}$, i.e. $\text{R}_R^{(n)}$ does not depend on $n$. Similarly, we have :    
\begin{equation}
\text{R}_T^{(1)} = \mathbb{E}[\textbf{H}_{1,:}\textbf{H}_{1,:}^H] = 
\begin{bmatrix}
\mathbb{E}[h_{11}h_{11}^*] & \mathbb{E}[h_{11}h_{12}^*] \\
\mathbb{E}[h_{12}h_{11}^*] & \mathbb{E}[h_{12}h_{12}^*] 
\end{bmatrix}
=
\begin{bmatrix}
1 & \mu \\
\mu^* & 1
\end{bmatrix}
\end{equation}
\begin{equation}
\text{R}_T^{(2)} = \mathbb{E}[\textbf{H}_{2,:}\textbf{H}_{2,:}^H] = 
\begin{bmatrix}
\mathbb{E}[h_{21}h_{21}^*] & \mathbb{E}[h_{21}h_{22}^*] \\
\mathbb{E}[h_{22}h_{21}^*] & \mathbb{E}[h_{22}h_{22}^*] 
\end{bmatrix}
=
\begin{bmatrix}
1 & \mu \\
\mu^* & 1
\end{bmatrix}
\end{equation}  
Thus $\text{R}_T^{(1)}$ is same as $\text{R}_T^{(2)}$, i.e. $\text{R}_T^{(m)}$ and it does not depend on $m$. Essentially the transmit and receive correlation matrices do not include $\mathbb{E}[h_{11}h_{22}^*]$ given by $\nu$ and $\mathbb{E}[h_{12}h_{21}^*]$ given by $\gamma$. 
Hence, even though the transmit correlation matrix for different receive indices $m$ is the same given by $\text{R}_T = [1, \mu ; \mu^*, 1]$, and the receive correlation matrix for different transmit indices $n$ is the same given by $\text{R}_R = [1, \rho ; \rho^*, 1]$, it does not imply that  $\text{R}_{MIMO} = \text{R}_T \otimes \text{R}_R$. Since $\text{R}_T \otimes \text{R}_R$ in this case would be given as :
\begin{equation}
\text{R}_T \otimes \text{R}_R = 
\begin{bmatrix}
1 & \rho & \mu & \mu \rho \\
\rho^* & 1 & \mu \rho^* & \mu \\
\mu^* & \mu^*\rho & 1 & \rho \\
\mu^*\rho^* & \mu^* & \rho^* & 1 \\
\end{bmatrix}
\end{equation} 
which is not equal to \eqref{counterexMat}.

The separability of correlation across row and column modes generalizes to a separability across all the modes for a tensor case, which we consider next. However, it is important not to propagate the  rather misleading conclusion from \cite{kermoal2002a}  to the more general tensor case.   

\subsubsection{Separability Across all Modes}\label{sectionSepa}
A correlation separable across all the modes can be represented using the mode-n product of tensors with mode specific correlation matrices \cite{da2011multi,HoffCovariance,DenizJOAS}. Consider an order $N$ random tensor $\pmb{\mathscr{X}} \in \mathbb{C}^{I_1 \times \dots \times I_N}$ with i.i.d. zero mean and unit variance elements. Let $\Psi^{(n)} \in \mathbb{C}^{I_n \times I_n}$ for $n=1,\dots,N$ be a sequence of Hermitian matrices such that $\Psi^{(n)} = \text{A}^{(n)}\text{A}^{(n)H}$ where $\text{A}^{(n)} \in \mathbb{C}^{I_n \times I_n}$ is the square root of $\Psi^{(n)}$. The mode-$n$ product of tensor $\pmb{\mathscr{X}}$ across all the modes with these matrices is expressed using the Tucker product as \cite{favier2016nested}:
\begin{equation} \label{Hcorr_moden}
\pmb{\mathscr{X}}^{corr} = \pmb{\mathscr{X}} \times_1 \text{A}^{(1)} \times_2 \text{A}^{(2)} \times_3 \cdots \times_N \text{A}^{(N)}
\end{equation} 
Let $\vecc (\pmb{\mathscr{X}})$ be denoted as $\myvec{x}$, then using the property of mode-$n$ product from \cite{favier2016nested},\cite[Lemma 2.1]{6288476}, we can write \eqref{Hcorr_moden} using Kronecker product denoted by $\otimes$ as :
\begin{equation}
\vecc (\pmb{\mathscr{X}}^{corr}) = (\text{A}^{(N)} \otimes \cdots \otimes \text{A}^{(1)}) \myvec{x}
\end{equation}
Then the correlation matrix of the vectorized tensor is given as   :
\begin{align}
\mathbb{E}[\vecc (\pmb{\mathscr{X}}^{corr}) \vecc (\pmb{\mathscr{X}}^{corr})^H] & = \mathbb{E}[(\text{A}^{(N)}  \otimes \cdots \otimes \text{A}^{(1)})\myvec{x} \cdot \myvec{x}^H  (\text{A}^{(N)} \otimes \cdots \otimes \text{A}^{(1)})^H] \label{eq38}\\
& = (\text{A}^{(N)}  \otimes \cdots \otimes \text{A}^{(1)})\mathbb{E}[\myvec{x} \cdot \myvec{x}^H]  (\text{A}^{(N)} \otimes \cdots \otimes \text{A}^{(1)})^H \\
& = (\text{A}^{(N)}  \otimes \cdots \otimes \text{A}^{(1)})  (\text{A}^{(N)} \otimes \cdots \otimes \text{A}^{(1)})^H \\
& = (\text{A}^{(N)} \text{A}^{(N)H}  \otimes \cdots \otimes \text{A}^{(1)} \text{A}^{(1)H})\label{KronVecHcorr} \\ 
& = \Psi^{(N)}  \otimes \cdots \otimes \Psi^{(1)}
\end{align}
where \eqref{KronVecHcorr} follows from matrix Kronecker product properties \cite[Corollary 4]{zhang2013kronecker}.  Hence the correlation matrix of the vectorized tensor is given in terms of the Kronecker product of different mode-$n$ factor correlation matrices denoted by $\Psi^{(n)}$. Such a model is called a separable model and is considered for real random variables in \cite{HoffCovariance}, however we present it here for complex case. The separability for real case can be derived by replacing the Hermitian operation with transpose in \eqref{eq38}-\eqref{KronVecHcorr}.  While \eqref{KronVecHcorr} expresses the correlation as a matrix by vectorizing $\pmb{\mathscr{X}}^{corr}$, it is shown in \cite[Proposition 2.1]{HoffCovariance} that the correlation of $\pmb{\mathscr{X}}^{corr}$ from \eqref{Hcorr_moden} can also be expressed as an order $2N$ tensor obtained via the outer product of the factor matrices $\Psi^{(n)}$ defined as $\bar{\mathscr{R}}= \Psi^{(1)} \circ \cdots \circ \Psi^{(N)}$. Note that the correlation tensor when defined as $\mathscr{R} = \mathbb{E}[\pmb{\mathscr{X}}^{corr} \circ \pmb{\mathscr{X}}^{corr*}]$ is just a permuted version of $\bar{\mathscr{R}}$, where $\mathscr{R}_{i_1,\dots,i_N,i_1',\dots,i_N'} = \bar{\mathscr{R}}_{i_1,i_1',\dots,i_N,i_N'} = \mathbb{E}[ \pmb{\mathscr{X}}^{corr}_{i_1,\dots,i_N} \cdot \pmb{\mathscr{X}}_{i_1',\dots,i_N'}^{corr*}] = \Psi^{(1)}_{i_1,i_1'} \cdots \Psi^{(N)}_{i_N,i_N'}$. Hence the separable model implies that each element in the correlation tensor can be written in terms of product of the elements of the factor matrices. 

It is important to mention that while the separable model  makes structural assumptions and thus does not represent the most general second order characterization as a full correlation tensor does,  it is still often preferred since it significantly reduces the number of parameters required to specify the second order characteristics. For instance, consider an order $N$ tensor of size $L \times L \times \dots \times L$, thus containing $L^N$ elements. A full correlation model of such a tensor would be an order $2N$ tensor containing a total of  $L^{2N}$  elements. The number of pseudo-diagonal elements in such correlation tensor would be $L^N$ and non pseudo-diagonal elements would be $L^{2N}-L^N$. But since correlation tensors are Hermitian, only half of the non pseudo-diagonal elements are distinct. Thus total number of  parameters required to specify a full correlation model would be:
\begin{equation}
\gamma_{full} = \dfrac{1}{2}(L^{2N}-L^N)+ L^N = \dfrac{1}{2}(L^{2N}+L^N)
\end{equation}
which is exponential in order of the tensor $N$. On the other hand the separable covariance model is represented using $N$ factor matrices of size $L \times L$ which are all Hermitian. Thus the total number of parameters required to specify the separable correlation model  is given by: 
\begin{equation}
\gamma_{sep} = \sum_{n=1}^{N}\dfrac{1}{2}(L^{2}+L)= \dfrac{N}{2}(L^{2}+L)
\end{equation}
which is linear in $N$. The ratio $\gamma = \gamma_{sep}/\gamma_{full}$ is plotted against $N$ in Figure \ref{Fig2_ref} for different values of $L$. For the vector case, $N=1$, the correlation is a matrix, and thus the ratio is 1. However as $N$ increases the ratio drops exponentially clearly indicating that the number of parameters to represent full correlation would be too large in high order tensors as compared to a separable model. Moreover, as $L$ increases, the ratio decreass with increasing $N$ even faster.

\begin{figure}[h]
\center
\includegraphics[scale=0.5]{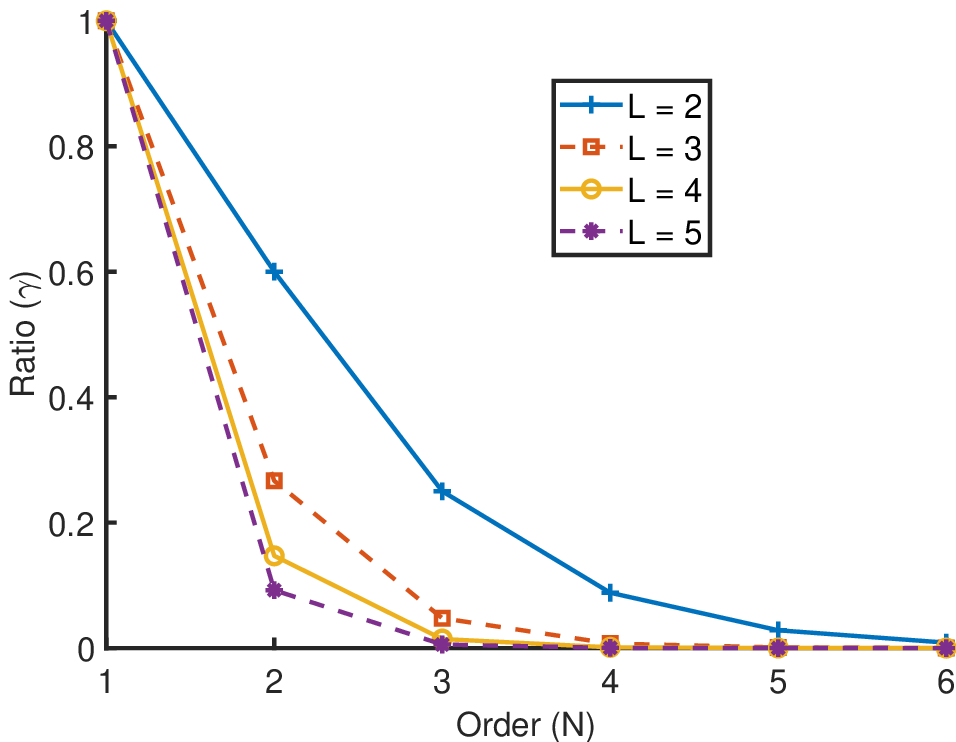}
\caption{Comparison of number of parameters needed to characterize correlation under full and separable assumptions   \label{Fig2_ref}}
\end{figure}

\section{Complex Gaussian tensors}\label{Sec4}
A Gaussian tensor is a random tensor where all its elements are random variables with a joint Gaussian distribution. 
\subsection{The complex Gaussian distribution}
Very often while discussing complex Gaussian distributions, the random entities are implicitly assumed to be proper. However, for a more general case including both proper and improper complex random entities, a complex Gaussian PDF of a vector is defined using its augmented or composite representation. Details on the complex Gaussian PDF for a vector and how it simplifies for the proper case can be found in \cite{ComplexPDFBook}. Since the distribution is a scalar function of all the individual elements of the tensor, it can be defined using the vectorization of the augmented tensor. 
  
The PDF of  Gaussian distributed complex-valued tensor $\pmb{\mathscr{X}} \in {\mathbb{C}}^{I_1 \times I_2 \times \dots \times I_N} $ of order $N$ can be specified using its augmented version as:
\begin{equation}\label{vectorGaussDist}
p_{{}_{\pmb{\mathscr{X}}}}(\text{x}) =p_{{}_{{\pmb{\mathscr{X}}^a}}}(\text{x}^a)= \dfrac{1}{{(\pi)}^{I_1I_2\dots I_N} (\det(\ddot{\text{Q}}))^{1/2} } \times \exp \Big\{ -\dfrac{1}{2}(\text{x}^a-\text{m}^a)^H \ddot{\text{Q}}^{-1} (\text{x}^a - \text{m}^a) \Big\}
\end{equation}
where $\text{x}^a=\vecc(\mathscr{X}^a)$, $\text{m}^a=\vecc(\mathbb{E}[\pmb{\mathscr{X}}^a])$ and $\ddot{\text{Q}}$ is the covariance matrix of the vectorized augmented tensor. However, Einstein product properties can be exploited to describe the PDF without any reshaping or vectorizing of the tensor as follows:
\begin{equation}\label{TensorGaussDist}
p_{{}_{\pmb{\mathscr{X}}}}(\mathscr{X}) = p_{{}_{\pmb{\mathscr{X}}^a}}(\mathscr{X}^a)= \dfrac{1}{{(\pi)}^{I_1I_2\dots I_N} (\det(\ddot{\mathscr{Q}}))^{1/2} } \times \exp \Big\{ -\dfrac{1}{2}(\mathscr{X}^a-\mathscr{M}^a)^* *_{N+1} \ddot{\mathscr{Q}}^{-1} *_{N+1} (\mathscr{X}^a - \mathscr{M}^a) \Big\}
\end{equation} 
where $\mathscr{M}^a = \mathbb{E}[\pmb{\mathscr{X}}^a]$ is the order-$N+1$ augmented mean tensor and $\ddot{\mathscr{Q}}=\mathbb{E}[(\pmb{\mathscr{X}}^a-\mathscr{M}^a) \circ (\pmb{\mathscr{X}}^a-\mathscr{M}^a)^*]$ is the order-$2N+2$ covariance  of the augmented tensor. The equivalence of \eqref{vectorGaussDist} and \eqref{TensorGaussDist} can be directly established based on Lemma 1 from \cite{AccessPaper}. 

\subsection{Circular Symmetric Gaussian distribution}

Statistically, a random tensor $\pmb{\mathscr{X}}$ is called \textit{symmetric} in distribution if $\pmb{\mathscr{X}}$ and $-\pmb{\mathscr{X}}$ have the same probability distribution. Note that it is different than the symmetric tensor definition which applies to the structure of deterministic tensors and states that a symmetric tensor is one which remains invariant under any permutation of its indices \cite{KoldaTensor}.  Further, a random tensor $\pmb{\mathscr{X}}$ is called \textit{circular symmetric} if $\pmb{\mathscr{X}}$ and $e^{j\phi}\pmb{\mathscr{X}}$ have the same distribution for any $\phi \in \mathbb{R}$, which intuitively implies that the distribution of circular $\pmb{\mathscr{X}}$ is rotation invariant. Based on the definition of proper and circular tensors, it can be readily established that  a complex Gaussian tensor is circular symmetric if and only if it is zero mean and proper. Figure \ref{Fig3_ref} shows the scatter plot of a $10 \times 10 \times 10$ tensor containing complex Gaussian distributed elements of different kinds to illustrate the difference between circular symmetric, proper and improper Gaussian distribution.

\begin{figure}[h]
\center
\includegraphics[scale=0.7]{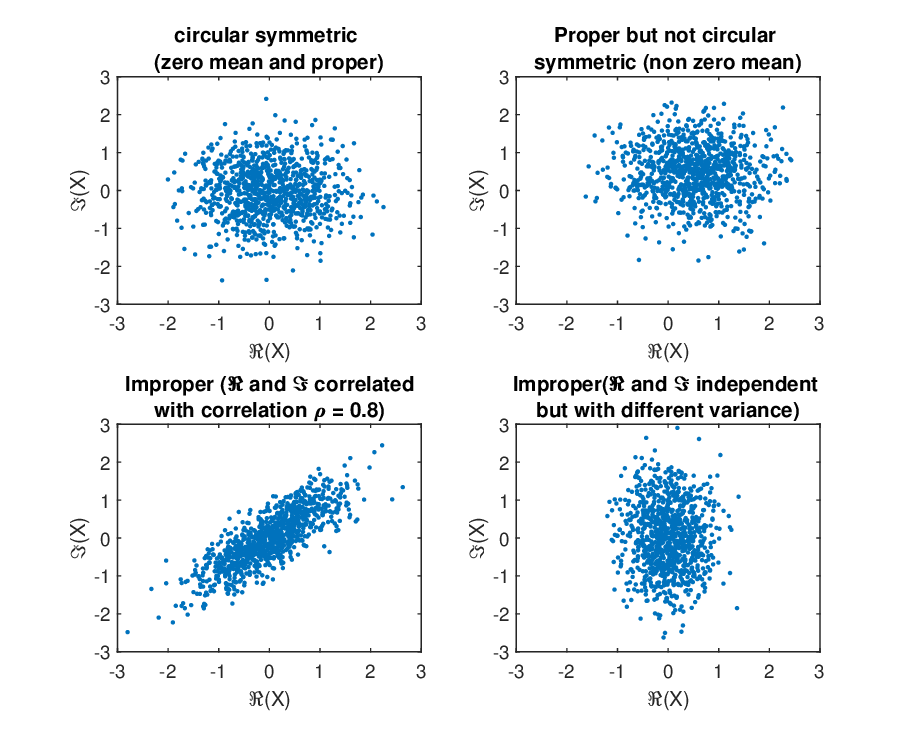}
\caption{Different forms of Gaussian distribution \label{Fig3_ref}}
\end{figure}

For proper complex Gaussian PDF we have  $\tilde{\text{Q}}=0_{\mathscr{T}}$, implying that $\ddot{\text{Q}}$ is block diagonal with $\text{Q}$ and $\text{Q}^*$ being the two blocks. So it is shown in \cite{ComplexPDFBook} that \eqref{vectorGaussDist} reduces to, 
\begin{equation}\label{vectorGaussDist1}
p_{{}_{\pmb{\mathscr{X}}}}(\text{x}) = \dfrac{1}{{(\pi)}^{I_1I_2\dots I_N} (\det(\text{Q})) } \times \exp \Big\{ -(\text{x}-\text{m})^H \text{Q}^{-1} (\text{x} - \text{m}) \Big\}
\end{equation}
which can also be written using tensor representation as:
\begin{equation}\label{TensorProperGaussDist}
p_{{}_{\pmb{\mathscr{X}}}}(\mathscr{X}) = \dfrac{1}{{(\pi)}^{I_1I_2\dots I_N} \det(\mathscr{Q}) } \times \exp \Big\{ -(\mathscr{X}-\mathscr{M})^* *_N \mathscr{Q}^{-1} *_N (\mathscr{X} - \mathscr{M}) \Big\}
\end{equation} 
where $\mathscr{M} = \mathbb{E}[\pmb{\mathscr{X}}]$ is the order-$N$ mean tensor and $\mathscr{Q}=\mathbb{E}[(\pmb{\mathscr{X}}-\mathscr{M}) \circ (\pmb{\mathscr{X}}-\mathscr{M})^*]$ is the order-$2N$ covariance tensor. Further, the PDF of a circular symmetric complex Gaussian tensor is defined as:
\begin{equation}\label{TensorCircSymGaussDist}
p_{\pmb{\mathscr{X}}}(\mathscr{X}) = \dfrac{1}{{(\pi)}^{I_1I_2\dots I_N} \det(\mathscr{Q}) } \times \exp \Big\{ -\mathscr{X}^* *_N \mathscr{Q}^{-1} *_N \mathscr{X} \Big\}.
\end{equation} 

Circular symmetry is a common assumption when considering Gaussian distributed complex random variables, as it implies the real and imaginary parts are independent and have same variance. However, in a more general set up it is important to distinguish between proper (zero pseudo-covariance), circular symmetric (zero mean and proper), and non circular distributions when dealing with complex Gaussian tensors. In case of real tensors, the Gaussian distribution is defined as:
\begin{equation}\label{realgaussiantensor}
p_{{}_{\pmb{\mathscr{X}}}}(\mathscr{X}) = \dfrac{\exp \Big\{  -\frac{1}{2}(\mathscr{X}-\mathscr{M}) *_N \mathscr{Q}^{-1} *_N (\mathscr{X} - \mathscr{M}) \Big\}}{\sqrt{{(2\pi)}^{I_1I_2\dots I_N} \det(\mathscr{Q})} } 
\end{equation}
which for the standard case (zero mean, identity covariance tensor), reduces to \cite{DenizJOAS}:
\begin{equation}\label{realstandard}
p_{{}_{\pmb{\mathscr{X}}}}(\mathscr{X}) = \dfrac{\exp \big( -\frac{1}{2}||\mathscr{X}||^2 \big)}{\sqrt{{(2\pi)}^{I_1I_2\dots I_N}} }. 
\end{equation}
Notationally, we can write  the general case from \eqref{TensorGaussDist} as $\pmb{\mathscr{X}} \sim \mathcal{CN}(\mathscr{M},\mathscr{Q},\tilde{\mathscr{Q}})$ or $\pmb{\mathscr{X}}^a \sim \mathcal{CN}(\mathscr{M}^a,\ddot{\mathscr{Q}})$, the proper case from \eqref{TensorProperGaussDist} as  $\pmb{\mathscr{X}} \sim \mathcal{CN}(\mathscr{M},\mathscr{Q})$, the circular symmetric case from \eqref{TensorCircSymGaussDist} as $\pmb{\mathscr{X}} \sim \mathcal{CN}(0_{\mathscr{T}},\mathscr{Q})$, the real case from \eqref{realgaussiantensor} as  $\pmb{\mathscr{X}} \sim \mathcal{N}(\mathscr{M},\mathscr{Q})$, and the real standard Gaussian case from \eqref{realstandard} as $\pmb{\mathscr{X}} \sim \mathcal{N}(0_{\mathscr{T}},\mathscr{I}_N)$, where $\mathscr{I}_N$ denotes a pseudo-diagonal identity tensor of order $2N$.

\subsection{Gaussian distribution with separable covariance}
The notion of separable covariance as discussed in Section \ref{sectionSepa}, is often used to describe a special case of proper Gaussian distributed random tensors where the covariance can be represented as separable via the Tucker product \cite{HoffCovariance}. A detailed discussion on statistical properties of random tensors with this structure can be found in \cite{DavidThesis, HoffCovariance, MLND}. While most such references deal with real tensors, we can generalize the concepts to complex tensors using the results from Section \ref{sectionSepa}. Let $\pmb{\mathscr{Z}} \in \mathbb{C}^{I_1 \times \dots \times I_K}$ be a tensor of order $K$ with zero mean and variance one entries. Now if we define a set of matrices $\{\text{A}^{(k)} \in \mathbb{C}^{I_k \times I_k} \}_{k=1}^{K}$, then the Tucker product of  $\pmb{\mathscr{Z}}$ with these matrices yield another complex tensor $\pmb{\mathscr{Y}}$: 
\begin{equation}
\pmb{\mathscr{Y}} = \pmb{\mathscr{Z}} {\times}_{1} \text{A}^{(1)} {\times}_{2} \text{A}^{(2)} {\times}_{3} \dots {\times}_{K} \text{A}^{(K)}.
\end{equation}
This Tucker product induces a transformation on the covariance structure of $\pmb{\mathscr{Z}}$. Since $\pmb{\mathscr{Z}}$ contains zero mean unit variance i.i.d. elements, its covariance is an identity tensor. Alternately, we also have  $\Cov[\vecc(\pmb{\mathscr{Z}})] = \mathbb{E}[\vecc(\pmb{\mathscr{Z}})\cdot \vecc(\pmb{\mathscr{Z}})^H]$ as an identity matrix. Let ${ \Sigma}_i = \text{A}^{(i)} \text{A}^{(i)H}$, then using the derivation from \eqref{eq38}-\eqref{KronVecHcorr}, the covariance of $\pmb{\mathscr{Y}}$ can be expressed using Kronecker Delta Matrix Structure by vectorizing the tensor as:
\begin{equation}\label{CovVecTensor}
\Cov[\vecc(\pmb{\mathscr{Y}})] = {\Sigma}_1 \otimes {\Sigma}_2 \otimes \dots \otimes {\Sigma}_K.
\end{equation}
Using this separability, we can analyze the covariance along all the domains of the tensor to account for variance in specific domains separately. For example, with a tensor of order 2, i.e. a matrix, the covariance will be a fourth order tensor but its elements can be described as Kronecker product of two matrices ${\bf \Sigma}_1 \otimes {\bf \Sigma}_2$, in which case ${\bf \Sigma}_1$ represents ``row covariance" and ${\bf \Sigma}_2$ represents ``column covariance". Similarly, for a general $K^{th}$ order tensor, each ${\bf \Sigma}_k$ will correspond to the covariance in the $k^{th}$ mode.
Based on this, a family of normal distributions for random tensors with separable covariance structure as in \eqref{CovVecTensor} can be generated using the Tucker product. A tensor $\pmb{\mathscr{Y}}$ of order $K$ can be written as:
\begin{equation}
\pmb{\mathscr{Y}} = \mathscr{M} + (\pmb{\mathscr{Z}} {\times}_{1} \text{A}^{(1)} {\times}_{2} \text{A}^{(2)} {\times}_{3} \dots {\times}_{K} \text{A}^{(K)})
\end{equation} 
where $\pmb{\mathscr{Z}}$ is a tensor of independent standard normal entries, $\mathscr{M}$ is the mean tensor and $\text{A}^{(i)}$ are matrices such that the $i^{th}$ domain covariance is defined as ${\Sigma}_i =\text{A}^{(i)}\text{A}^{(i)H}$. Notationally, it is often written as $\myvec{y} = \vecc(\pmb{\mathscr{Y}}) \sim \mathcal{CN}(\text{m}, {\Sigma}_1 \otimes \dots \otimes {\Sigma}_K)$ where $ \text{m} = \vecc(\mathscr{M})$. The density function of such a distribution can be written using the complex multivariate Gaussian distribution as:
\begin{equation}\label{NormalDistReal}
p_{{}_{\pmb{\mathscr{Y}}}}(\text{y}) = \dfrac{1}{{(\pi)}^{J} \big( {\prod}_{i=1}^{K} \det(\Sigma_i)^{J/(J_i)} \big) } \times \exp \Big\{ -(\text{y}-\text{m})^H {\Sigma}_{1:K}^{-1} (\text{y} - \text{m}) \Big\}
\end{equation}
where $J = \prod_{k=1}^{K}J_k$ and ${\Sigma}_{1:K}^{-1} = \big( {\Sigma}_1 \otimes {\Sigma}_2 \otimes \dots \otimes {\Sigma}_K \big)^{-1}$. A specific case of this can be seen in the definition of matrix variate normal distribution \cite{MatrixVariateBook}. A random matrix ${\bf X}$ of size $P \times N$ is said to have a matrix variate normal distribution with mean matrix $\text{M}$ and covariance matrix ${{\Sigma}}_1 \otimes {{\Sigma}}_2$, if $\vecc({\bf X}) \sim \mathcal{CN}(\vecc(\text{M}),{{\Sigma}}_1 \otimes {{\Sigma}}_2)$.  

\section{Complex tensor random processes}\label{Sec5}
A complex random tensor process $\{ \pmb{\mathscr{X}}[t]:t \in \mathcal{T}\}$ defined over a probability space $(\Omega, \mathcal{F}, P)$ is a collection of complex random tensors over the same probability space indexed by the variable $t$. When the index set $\mathcal{T}$ is continuous, it denotes a continuous complex random tensor process. When $\mathcal{T}$ is discrete, it denotes a discrete complex random tensor process, also referred as complex random tensor sequence. Consequently, a complex random tensor process, defined over a probability space can be viewed as a function tensor $\pmb{\mathscr{X}}[t,\omega]$ where $t \in \mathcal{T}, \omega \in \Omega$. In other words, a function tensor $\pmb{\mathscr{X}}[t] \in \mathbb{C}_t^{I_1 \times \dots \times I_N}$ is a complex random function tensor if its components are complex random processes. For details on function tensors, refer to \cite{pandey2023linear}.
Throughout this section, unless otherwise specified, the term `random process' applies to both continuous and discrete cases.
\subsection{Auto-correlation and Cross-correlation}
The \textit{mean} of a complex tensor random process $\pmb{\mathscr{X}}[k] \in \mathbb{C}_k^{I_1 \times \dots \times I_N}$ is defined as $\mathscr{M}[k] = \mathbb{E} \big[\pmb{\mathscr{X}}[k] \big]$
with components $\mathscr{M}_{i_1, \dots ,i_N}[k] = \mathbb{E} \big[\pmb{\mathscr{X}}_{i_1, \dots ,i_N}[k] \big]$.

The \textit{auto-correlation} function of a complex tensor random process $\pmb{\mathscr{X}}[k] \in \mathbb{C}_k^{I_1 \times \dots \times I_N}$ is a tensor $\mathscr{R}_{\pmb{\mathscr{X}}}[k,i] \in \mathbb{C}_{(k,i)}^{I_1 \times \dots \times I_N \times I_1 \times \dots \times I_N}$ defined as:
\begin{equation}
\mathscr{R}_{\pmb{\mathscr{X}}}[k,i] = \mathbb{E} \big[\pmb{\mathscr{X}}[k] \circ \pmb{\mathscr{X}}^{*}[k - i] \big].
\end{equation}
The pseudo-diagonal elements of $\mathscr{R}_{\pmb{\mathscr{X}}}[k,i]$, denoted by $\mathscr{R}_{\pmb{\mathscr{X}}_{i_1,\dots ,i_N,i_1,\dots ,i_N}}[k,i]$, are the auto-correlation functions of $\pmb{\mathscr{X}}_{i_1, \dots ,i_N}[k]$ and the cross-correlation between two different components $\pmb{\mathscr{X}}_{i_1, \dots , i_N}[k]$ and $\pmb{\mathscr{X}}_{i_1', \dots ,i_N'}[k]$ is embedded in the off pseudo-diagonal elements $\mathscr{R}_{\pmb{\mathscr{X}}_{i_1,\dots ,i_N,i_1',\dots ,i_N'}}[i]$.

We can define auto-covariance of $\pmb{\mathscr{X}}[k]$ as the auto-correlation of the corresponding centralized (zero mean) tensor process given by  
$(\pmb{\mathscr{X}}[k]- \mathbb{E}[\pmb{\mathscr{X}}[k]])$.

The \textit{cross-correlation} of two different complex tensor random processes  $\pmb{\mathscr{X}}[k] \in \mathbb{C}_k^{I_1 \times \dots \times I_N}$ and $\pmb{\mathscr{Y}}[k] \in \mathbb{C}_k^{J_1 \times \dots \times J_M}$ is a tensor $\mathscr{R}_{\pmb{\mathscr{X}},\pmb{\mathscr{Y}}}[k,i] \in \mathbb{C}_k^{I_1 \times \dots \times I_N \times J_1 \times \dots \times J_M}$ defined as:
\begin{equation}
\mathscr{R}_{\pmb{\mathscr{X}},\pmb{\mathscr{Y}}}[k,i] = \mathbb{E} \big[\pmb{\mathscr{X}}[k] \circ \pmb{\mathscr{Y}}^{*}[k - i] \big]
\end{equation}
where $\mathscr{R}_{\pmb{\mathscr{X}},\pmb{\mathscr{Y}}_{i_1, \dots ,i_N,j_1, \dots ,j_M}}[k,i] = \mathbb{E}\bigg[\pmb{\mathscr{X}}_{i_1, \dots , i_N}[k]\pmb{\mathscr{Y}}^*_{j_1 , \dots , j_M}[k-i]\bigg]$.

Similar to auto-covariance, the cross-covariance between $\pmb{\mathscr{X}}[k]$ and $\pmb{\mathscr{Y}}[k]$ can be defined as the cross-correlation between their respective centralized tensor processes given by  $(\pmb{\mathscr{X}}[k]- \mathbb{E}[\pmb{\mathscr{X}}[k]])$ and $(\pmb{\mathscr{Y}}[k]- \mathbb{E}[\pmb{\mathscr{Y}}[k]])$. 
 
\subsection{Wide Sense Stationary (WSS) and jointly WSS sequences}

A tensor random process $\pmb{\mathscr{X}}[k] \in \mathbb{C}_k^{I_1 \times \dots \times I_N}$ is called wide sense stationary (WSS) if its mean $\mathbb{E} \big[ \pmb{\mathscr{X}}[k] \big]$ is independent of the index $k$ and its auto-correlation $\mathbb{E} \big[\pmb{\mathscr{X}}[k] \circ \pmb{\mathscr{X}}^{*}[k - i] \big]$ depends only on $i$. Two WSS tensor random processes $\pmb{\mathscr{X}}[k] \in \mathbb{C}_k^{I_1 \times \dots \times I_N}$ and $\pmb{\mathscr{Y}}[k] \in \mathbb{C}_k^{J_1 \times \dots \times J_M}$ are jointly WSS if their cross-correlation $\mathbb{E} \big[\pmb{\mathscr{X}}[k] \circ \pmb{\mathscr{Y}}^{*}[k-i] \big]$ depends only on $i$. 

When a tensor random process is passed through a multi-linear time invariant system, the output is given by the contracted convolution of the input tensor with the system tensor \cite{pandey2023linear, AdithyaPaper}.  It can be shown that if the input to a such a system tensor is a WSS tensor process, then the output is also WSS and the input and output tensor processes are jointly WSS. In the following lemma, we establish this result for tensor random sequences, but similarly can be proven for continuous random processes as well. 

\begin{lemma}
If the input $\pmb{\mathscr{X}}[k] \in \mathbb{C}_k^{I_1 \times \dots \times I_N}$ to a multi-linear time invariant system with an impulse  response $\mathscr{H}[k] \in \mathbb{C}_k^{J_1 \times \dots \times J_M \times I_1 \times \dots \times I_N}$ is WSS, then the output which is given by a discrete contracted convolution $\pmb{\mathscr{Y}}[k] = \sum_{n}\mathscr{H}[n]*_N\pmb{\mathscr{X}}[k-n]$, is also WSS, and the input and the output tensor sequences are jointly WSS. 
\end{lemma}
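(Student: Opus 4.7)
The plan is to directly verify the three defining properties: (i) that $\mathbb{E}[\pmb{\mathscr{Y}}[k]]$ is independent of $k$, (ii) that the auto-correlation $\mathbb{E}[\pmb{\mathscr{Y}}[k]\circ \pmb{\mathscr{Y}}^*[k-i]]$ depends only on the lag $i$, and (iii) that the cross-correlation $\mathbb{E}[\pmb{\mathscr{X}}[k]\circ \pmb{\mathscr{Y}}^*[k-i]]$ depends only on $i$. All three will follow from pulling sums and the Einstein product $*_N$ outside the expectation (justified by linearity and Fubini on a deterministic impulse response tensor $\mathscr{H}$), and then invoking the WSS hypothesis on $\pmb{\mathscr{X}}$.

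First I would handle the mean. Since $\mathscr{H}[n]$ is deterministic, linearity of expectation gives $\mathbb{E}[\pmb{\mathscr{Y}}[k]] = \sum_n \mathscr{H}[n]*_N \mathbb{E}[\pmb{\mathscr{X}}[k-n]]$, and because the input mean is constant in $k$, call it $\mathscr{M}_{\pmb{\mathscr{X}}}$, this collapses to $\big(\sum_n \mathscr{H}[n]\big)*_N \mathscr{M}_{\pmb{\mathscr{X}}}$, which is independent of $k$ as required.

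Next I would compute the output auto-correlation. Substituting the convolution on both factors,
\begin{equation}
\mathbb{E}\big[\pmb{\mathscr{Y}}[k]\circ \pmb{\mathscr{Y}}^*[k-i]\big] = \sum_{n}\sum_{m}\mathbb{E}\big[(\mathscr{H}[n]*_N\pmb{\mathscr{X}}[k-n]) \circ (\mathscr{H}[m]*_N\pmb{\mathscr{X}}[k-i-m])^*\big].
\end{equation}
Since $\mathscr{H}$ is deterministic, one can pull the factors out of the outer product and the expectation (using the distributivity of $*_N$ over $\circ$ for the deterministic arguments) so that the remaining random object inside $\mathbb{E}[\cdot]$ is $\pmb{\mathscr{X}}[k-n]\circ \pmb{\mathscr{X}}^*[k-i-m]$. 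By WSS of $\pmb{\mathscr{X}}$, this expectation equals $\mathscr{R}_{\pmb{\mathscr{X}}}[m-n+i]$, which depends only on $m-n+i$, not on $k$. Hence the double sum is a function of $i$ alone, establishing WSS of $\pmb{\mathscr{Y}}$. An almost identical computation for the cross term $\mathbb{E}[\pmb{\mathscr{X}}[k]\circ \pmb{\mathscr{Y}}^*[k-i]] = \sum_m \mathbb{E}[\pmb{\mathscr{X}}[k]\circ(\mathscr{H}[m]*_N\pmb{\mathscr{X}}[k-i-m])^*]$ reduces, after pulling $\mathscr{H}[m]^*$ outside, to $\sum_m \mathscr{R}_{\pmb{\mathscr{X}}}[i+m]$ contracted with $\mathscr{H}[m]^*$, again depending only on $i$, which gives joint WSS.

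The main obstacle I anticipate is bookkeeping: justifying that $\mathscr{H}[n]*_N$ and the outer product with a deterministic tensor can be moved outside the expectation in the correct mode positions, without distorting the order of indices. Concretely, this requires that for deterministic $\mathscr{A}$, $\mathscr{B}$ and random $\pmb{\mathscr{U}}$, $\pmb{\mathscr{V}}$ one has $\mathbb{E}[(\mathscr{A}*_N\pmb{\mathscr{U}})\circ(\mathscr{B}*_N\pmb{\mathscr{V}})^*] = (\mathscr{A}\otimes_{\text{appropriate}}\mathscr{B}^*) \text{ contracted with } \mathbb{E}[\pmb{\mathscr{U}}\circ\pmb{\mathscr{V}}^*]$ in the right modes, which is an elementwise consequence of linearity and can be verified by writing out the Einstein product index-wise as in Section \ref{Sec2}. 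Once this identity is stated cleanly, the rest of the argument is mechanical.
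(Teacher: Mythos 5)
Your proposal is correct and follows essentially the same route as the paper: verify constancy of the output mean, then reduce the output auto-correlation and the input--output cross-correlation to contractions of $\mathscr{R}_{\pmb{\mathscr{X}}}$ with the deterministic impulse response, invoking WSS of the input so that only the lag survives. The index-bookkeeping step you flag is handled in the paper by citing the commutativity and associativity identities for the Einstein product (to move $\mathscr{H}[n]$ and $\mathscr{H}^H[m]$ outside the expectation), and computing $\mathscr{R}_{\pmb{\mathscr{X}},\pmb{\mathscr{Y}}}$ rather than $\mathscr{R}_{\pmb{\mathscr{Y}},\pmb{\mathscr{X}}}$ is an immaterial difference.
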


\begin{proof}
The mean of the output sequence $\pmb{\mathscr{Y}}[k]$ can be written as:
\begin{align}
\mathbb{E} \big[ \pmb{\mathscr{Y}}[k] \big] &= \mathbb{E} \big[ \sum_{n = -\infty}^{+\infty}\mathscr{H}[n]*_N\pmb{\mathscr{X}}[k-n] \big] \nonumber \\
&= \sum_{n = -\infty}^{+\infty}\mathscr{H}[n]*_N \mathbb{E} \big[ \pmb{\mathscr{X}}[k-n]\big].
\end{align}
Since the summation is over all values of $n$ and $\pmb{\mathscr{X}}[k]$ is WSS, so $\mathbb{E} \big[ \pmb{\mathscr{X}}[k-n]\big]$ is constant, hence $\mathbb{E} \big[ \pmb{\mathscr{Y}}[k] \big]$ also does not depend on $k$.

The auto-correlation of $\pmb{\mathscr{Y}}[k]$ is
\begin{align}
\mathbb{E} \big[\pmb{\mathscr{Y}}[k] \circ \pmb{\mathscr{Y}}^{*}[k - i] \big] &= \mathbb{E} \bigg[ \big( \sum_{n = -\infty}^{+\infty}\mathscr{H}[n]*_N \pmb{\mathscr{X}}[k-n]\big) \circ  \big( \sum_{m = -\infty}^{+\infty}\mathscr{H}[m]*_N \pmb{\mathscr{X}}[k-i-m] \big)^{*} \bigg] \notag\\\label{transposestep1}
&= \sum_{n = -\infty}^{+\infty}\sum_{m = -\infty}^{+\infty}\mathbb{E} \bigg[ \big( \mathscr{H}[n]*_N \pmb{\mathscr{X}}[k-n] \big) \circ  \big( \mathscr{H}[m]*_N \pmb{\mathscr{X}}[k-i-m] \big)^{*} \bigg].
\end{align}
Using the commutativity result from  \cite[equation (22)]{pandey2023linear}), we can write \eqref{transposestep1} as
\begin{align}
\mathbb{E} \big[\pmb{\mathscr{Y}}[k] \circ \pmb{\mathscr{Y}}^{*}[k - i] \big] &= \sum_{n = -\infty}^{+\infty}\sum_{m = -\infty}^{+\infty}\mathbb{E} \bigg[ \big( \mathscr{H}[n]*_N\pmb{\mathscr{X}}[k-n] \big) \circ  \big( \{\pmb{\mathscr{X}}[k-i-m]*_N\mathscr{H}^{T}[m] \big)^{*} \bigg] \notag\\
&= \sum_{n = -\infty}^{+\infty}\sum_{m = -\infty}^{+\infty}\mathbb{E} \bigg[ \big( \mathscr{H}[n]*_N \pmb{\mathscr{X}}[k-n] \big) \circ  \big( \pmb{\mathscr{X}}^{*}[k-i-m]*_N \mathscr{H}^{H}[m] \big) \bigg].
\end{align} 
Further using the associativity property of the Einstein product from \cite{pandey2023linear}, we get:
\begin{align}
\mathbb{E} \big[\pmb{\mathscr{Y}}[k] \circ \pmb{\mathscr{Y}}^{*}[k - i] \big] &= \sum_{n = -\infty}^{+\infty}\sum_{m = -\infty}^{+\infty}  \big( \mathscr{H}[n]*_N  \mathbb{E} \Big[ \pmb{\mathscr{X}}[k-n] \circ   \pmb{\mathscr{X}}^{*}[k-i-m]\Big] *_N \mathscr{H}^{H}[m] \big) \\
\label{wss1}
\mathscr{R}_{\pmb{\mathscr{Y}}}[i]&= \sum_{n = -\infty}^{+\infty}\sum_{m = -\infty}^{+\infty} \mathscr{H}[n] *_N \mathscr{R}_{\pmb{\mathscr{X}}}[m+i-n]*_N\mathscr{H}^{H}[m].
\end{align}
Since the output auto-correlation tensor as shown in \eqref{wss1} is obtained by summing over $n$ and $m$ it depends only on $i$. Furthermore since the  mean tensor of $\pmb{\mathscr{Y}}[k]$ is constant, the output tensor random sequence is WSS. Further to show that it is jointly WSS with the input sequence, we note that the cross-correlation between $\pmb{\mathscr{X}}[k]$ and $\pmb{\mathscr{Y}}[k]$ can be written as:
\begin{align}
\mathbb{E} \big[\pmb{\mathscr{Y}}[k] \circ \pmb{\mathscr{X}}^{*}[k - i] \big] = \sum_{n = -\infty}^{+\infty} \mathbb{E} \bigg[ \big(\mathscr{H}[n]*_N \pmb{\mathscr{X}}[k-n]\big) \circ \pmb{\mathscr{X}}^{*}[k-i] \bigg]. 
\end{align}
Using the associativity property of the Einstein product from \cite{pandey2023linear} we get:
\begin{align}
\mathbb{E} \big[\pmb{\mathscr{Y}}[k] \circ \pmb{\mathscr{X}}^{*}[k - i] \big] 
&= \sum_{n = -\infty}^{+\infty}  \mathscr{H}[n]*_N \mathbb{E} \bigg[ \pmb{\mathscr{X}}[k-n]  \circ \pmb{\mathscr{X}}^{*}[k-i] \bigg] \\\label{wss2}
\mathscr{R}_{\pmb{\mathscr{Y}},\pmb{\mathscr{X}}}[i]&= \sum_{n = -\infty}^{+\infty}\mathscr{H}[n] *_N \mathscr{R}_{\pmb{\mathscr{X}}}[i-n]
\end{align}
which depends only on $i$ and hence the cross-correlation of $\pmb{\mathscr{X}}[k]$ and $\pmb{\mathscr{Y}}[k]$ also depends only on $i$. Thus, the output and input are jointly WSS.
\end{proof}

\subsection{Power Spectrum Density (PSD) and Power Cross-spectrum Density (PCD)} 
The power spectrum density (PSD) of a WSS random tensor process is defined as the Fourier transform of its auto-correlation. Thus, for a random tensor sequence $\pmb{\mathscr{X}}[k] \in \mathbb{C}_k^{I_1 \times \dots \times I_N}$, the PSD is defined as
\begin{equation}
\breve{\mathscr{S}}_{\pmb{\mathscr{X}}}[f] = \sum_{i=-\infty}^{+\infty}\mathscr{R}_{\pmb{\mathscr{X}}}[i]e^{-j2\pi f i}
\end{equation}
and for a continuous random tensor process $\pmb{\mathscr{X}}[t] \in \mathbb{C}_t^{I_1 \times \dots \times I_N}$, the PSD is defined as
\begin{equation}
\breve{\mathscr{S}}_{\pmb{\mathscr{X}}}[f] = \int_{-\infty}^{\infty} \mathscr{R}_{\pmb{\mathscr{X}}}[\tau]e^{-j2\pi f \tau} d\tau
\end{equation}
where the PSD $\breve{\pmb{\mathscr{S}}}_{\pmb{\mathscr{X}}}[f] \in \mathbb{C}_f^{I_1 \times \dots \times I_N \times I_1 \times \dots \times I_N}$ is an order $2N$ tensor function.
The pseudo-diagonal elements of $\breve{\mathscr{S}}_{\pmb{\mathscr{X}}}[f]$, denoted by $\breve{\mathscr{S}}_{\pmb{\mathscr{X}}_{i_1,\dots ,i_N,i_1,\dots ,i_N}}[f]$, are the power spectra of $\pmb{\mathscr{X}}_{i_1, \dots ,i_N}$ and the power cross-spectrum between two different components $\pmb{\mathscr{X}}_{i_1, \dots , i_N}$ and $\pmb{\mathscr{X}}_{i_1', \dots ,i_N'}$ is captured by the off pseudo-diagonal elements denoted by $\breve{\mathscr{S}}_{\pmb{\mathscr{X}}_{i_1,\dots ,i_N,i_1',\dots ,i_N'}}[f]$. 

Similarly, the \textit{power cross-spectrum density} (PCD) of two jointly WSS tensor processes $\pmb{\mathscr{X}}[k] \in \mathbb{C}_k^{I_1 \times \dots \times I_N}$ and $\pmb{\mathscr{Y}}[k] \in \mathbb{C}_k^{J_1 \times \dots \times J_M}$ is a tensor $\breve{\mathscr{S}}_{\pmb{\mathscr{Y}},\pmb{\mathscr{X}}}[f] \in \mathbb{C}_f^{J_1 \times \dots \times J_M \times I_1 \times \dots \times I_N}$ given by the Fourier transform of their cross-correlation tensor $\mathscr{R}_{\pmb{\mathscr{Y}},\pmb{\mathscr{X}}}[i]$. The relationship between input and output PSD/PCD can be established when a WSS complex random tensor process is passed through a multi-linear time invariant system. In the following lemma, we establish such a relation for complex random tensor sequences, but similarly can be proven for continuous case as well.

\begin{lemma}
Consider a WSS input random sequence $\pmb{\mathscr{X}}[k] \in \mathbb{C}_k^{I_1 \times \dots \times I_N}$ to a multi-linear time invariant system with an impulse  response $\mathscr{H}[k] \in \mathbb{C}_k^{J_1 \times \dots \times J_M \times I_1 \times \dots \times I_N}$ such that the output sequence $\pmb{\mathscr{Y}}[k] \in \mathbb{C}_k^{J_1 \times \dots \times J_M}$ is also WSS. Let $\breve{\mathscr{S}}_{\pmb{\mathscr{Y}}}[f]$, $\breve{\mathscr{S}}_{\pmb{\mathscr{X}}}[f]$, $\breve{\mathscr{S}}_{{\pmb{\mathscr{Y}}\pmb{\mathscr{X}}}}[f]$ and $\breve{\mathscr{H}}[f]$ denote the output PSD, input PSD, output-input PCD, and the Fourier transform of the impulse response respectively, then we have:
\begin{equation}\label{spec1}
\breve{\mathscr{S}}_{\pmb{\mathscr{Y}}}[f] = \breve{\mathscr{H}}[f] *_N \breve{\mathscr{S}}_{\pmb{\mathscr{X}}}[f]*_N \breve{\mathscr{H}}^H[f]
\end{equation}
\begin{equation}\label{spec2}
\breve{\mathscr{S}}_{{\pmb{\mathscr{Y}},\pmb{\mathscr{X}}}}[f] = \breve{\mathscr{H}}[f]*_N \breve{\mathscr{S}}_{\pmb{\mathscr{X}}}[f]
\end{equation}
\end{lemma}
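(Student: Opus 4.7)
The plan is to derive both \eqref{spec1} and \eqref{spec2} by applying the discrete-time Fourier transform directly to the auto-correlation expression \eqref{wss1} and the cross-correlation expression \eqref{wss2} established in the preceding lemma. The key observation is that these two identities are Einstein-product analogues of the classical convolution-multiplication duality of the DTFT, so once we confirm that $*_N$ is bilinear with respect to summation and that scalar exponentials can be pulled freely through $*_N$, the usual frequency-domain argument carries over without structural change.

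For the cross-PCD identity \eqref{spec2}, I would substitute $\mathscr{R}_{\pmb{\mathscr{Y}},\pmb{\mathscr{X}}}[i] = \sum_{n} \mathscr{H}[n] *_N \mathscr{R}_{\pmb{\mathscr{X}}}[i-n]$ into the DTFT definition of $\breve{\mathscr{S}}_{\pmb{\mathscr{Y}},\pmb{\mathscr{X}}}[f]$, interchange the summations over $i$ and $n$, and split the kernel as $e^{-j2\pi f i} = e^{-j2\pi f n}\,e^{-j2\pi f(i-n)}$ so that one exponential attaches to $\mathscr{H}[n]$ and the other to $\mathscr{R}_{\pmb{\mathscr{X}}}[i-n]$. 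A change of variable $i' = i-n$ then decouples the two sums and produces exactly $\breve{\mathscr{H}}[f] *_N \breve{\mathscr{S}}_{\pmb{\mathscr{X}}}[f]$.

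For the auto-PSD identity \eqref{spec1}, the same strategy extends to three summation indices $n$, $m$, $i$ together with one Hermitian factor. I would perform the change of variable $k = m+i-n$ in order to decouple the triple sum into the product of three independent series, each of which is a DTFT. The leftmost becomes $\breve{\mathscr{H}}[f]$, the middle one $\breve{\mathscr{S}}_{\pmb{\mathscr{X}}}[f]$, and the rightmost, which carries $\mathscr{H}^H[m]\,e^{+j2\pi f m}$, must be identified with $\breve{\mathscr{H}}^H[f]$. This last step relies on the observation that Hermitian transposition commutes with summation and that complex conjugation of the scalar flips the sign of the exponent, giving $\sum_m \mathscr{H}^H[m]\,e^{+j2\pi f m} = \bigl(\sum_m \mathscr{H}[m]\,e^{-j2\pi f m}\bigr)^H = \breve{\mathscr{H}}^H[f]$.

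The main obstacle is essentially bookkeeping rather than anything deep: one must keep the order of the Einstein products correct throughout, since $*_N$ is not commutative, and justify the interchange of the summations through an implicit absolute-summability assumption on $\mathscr{H}[k]$ and $\mathscr{R}_{\pmb{\mathscr{X}}}[i]$. The only conceptual subtlety is verifying that complex scalars commute with $*_N$ and that the Hermitian operation distributes across a sum, both of which follow directly from the elementwise definition of the Einstein product and the Hermitian introduced in Section \ref{Sec2}. Once these points are handled, both \eqref{spec1} and \eqref{spec2} follow by direct computation.
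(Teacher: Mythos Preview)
Your proposal is correct and follows essentially the same route as the paper: both arguments take the DTFT of \eqref{wss1} and \eqref{wss2} and use the convolution--multiplication duality to conclude. The only difference is that the paper invokes this duality as a black-box result from \cite{pandey2023linear} (noting that \eqref{wss1} and \eqref{wss2} are discrete contracted convolutions, so the Fourier transform immediately yields Einstein products), whereas you unroll the computation explicitly via index shifts; your identification of $\sum_m \mathscr{H}^H[m]e^{+j2\pi f m}$ with $\breve{\mathscr{H}}^H[f]$ is exactly the step hidden inside that citation.
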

\begin{proof}
Note that \eqref{wss1} represents the output auto-correlation as a discrete contracted convolution between $\mathscr{H}[i], \mathscr{R}_{\pmb{\mathscr{X}}}[i]$ and $\mathscr{H}^H[i]$. Also \eqref{wss2} represents the output-input cross correlation as a discrete contracted convolution between $\mathscr{H}[i]$ and $\mathscr{R}_{\pmb{\mathscr{X}}}[i]$. Discrete contracted convolution in time domain translates to Einstein product in frequency domain as shown in \cite{pandey2023linear}, thus \eqref{spec1} and \eqref{spec2} are direct consequences of taking Fourier transforms of \eqref{wss1} and \eqref{wss2} respectively.
\end{proof}
\section{Tensor spectrum and asymptotic properties of random tensors}\label{sec6}
Spectral theory of matrices and in particular the study of matrix eigenvalues and singular values has played a crucial role in matrix analysis.
Moreover, the study of the asymptotic behavior of the matrix spectrum when the size of the matrix goes towards infinity has lead to the development of Random Matrix Theory (RMT) \cite{RandomMatrixForWireless}.

The extension of RMT to tensors requires the definition of tensor spectrum. However, there is no consensus on such a notion.
Several definitions of tensor eigenpairs, i.e. tensor eigenvalues and their corresponding eigenvectors, have been proposed as generalizations of their matrices counterparts.
These definitions are often defined through fixed point equations involving a specific tensor contraction and a specific normalization of the eigenvector.

For example, H-eigenpairs have been considered independently in \cite{QiLEigen} and \cite{limsingular} as real solutions of a fixed point equation. 
In \cite{QiLEigen}, Qi also considered Z-eigenvalues (respectively E-eigenvalues) as real (respectively complex) solutions of a second fixed point equation.
Qi et al. introduced in \cite{qiwangwu2008} a generalization of the Z-eigenvalues, relaxing the eigenvector normalization, that they named D-eigenvalues.
However, one of the drawback of these definition if that the computation of such eigenpairs are often NP-hard \cite{chang2013}.
On another line of research, a tensor eigenvalue definition related to eigenvalues of the matrix mapping of tensors defined in \eqref{tranformprop} has been considered e.g. in \cite{itskov, LuEVD,pandey2023linear}.
This definition leads to a notion of eigenpair easier to manipulate.
We will simply call them \textit{tensor eigenpairs}.


\subsection{Random Matrix Theory}
One of the aim of RMT is to study the distribution of the eigenvalues of a random matrix.
In particular, we can define their empirical distribution as follows.
\begin{definition}
Let $\textbf{A}\in\mathbb{C}^{I\times J}$ be a random matrix and note $\lambda_1,\dots, \lambda_{I}$ its (left) singular values \cite{RandomMatrixForWireless}. 
The empirical singular value distribution of $\textbf{A}$ is then the distribution defined by
\begin{equation}
f_I(\lambda) = \frac{1}{I}\sum_{i=1}^{I} {\delta}_{\lambda=\lambda_i}
\end{equation}
where $\lambda\mapsto{\delta}_{\lambda=\cdot}$ denotes the Dirac delta function \cite{RandomMatrixForWireless}.
\end{definition}
Major results in RMT provide proofs of convergence of this empirical distribution under weak conditions on the distributions of the entries of a random matrix (making these results universal since they apply to a large family of distributions and not only to Gaussian distributions for example).
We will present the natural extension of such results to the tensor eigenvalue decomposition (EVD) and singular value decomposition (SVD) considered in \cite{pandey2023linear}.
Before doing so, let us introduce two limit distributions classical in RMT.

\begin{definition}\cite{arnold71}
A semi circle distribution of parameter $\beta$ is a distribution of support $[-\beta, \beta]$ with density given by
\begin{equation}\label{eq:semicircle}
f_{\beta}(x) = \frac{2}{\pi \beta^2}\sqrt{\beta^2-x^2}.
\end{equation}
\end{definition}

\begin{definition}\cite[Section 3.2]{RandomMatrixForWireless}
A Marcenko-Pastur distribution of parameter $c$ is a distribution of support included in $[0, +\infty)$ with density equal to
\begin{equation}\label{eq:marcenko}
f_c(x) =\left\{\begin{array}{ll}
 \Big(1-\frac{1}{c}\Big){\delta}_{x=0} + \frac{1}{2\pi cx}\sqrt{(c_+-x)(x-c_-)}&\text{if $c>1$}\\
\frac{1}{2\pi cx}\sqrt{(c_+-x)(x-c_-)}&\text{if $c\leq 1$}
\end{array}\right.
\end{equation}
with
\begin{equation}
\left\{\begin{array}{l}
c_+ = (1+\sqrt{c})^2\\
c_- = (1-\sqrt{c})^2
\end{array}\right..
\end{equation}
\end{definition}


\subsection{Tensor eigenvalues and eigenvectors}\label{sec:evd}

The EVD of a Hermitian tensor $\mathscr{A} \in \mathbb{C}^{I_1 \times \dots \times I_N \times I_1 \times \dots \times I_N}$ is characterized by the existence of a unitary tensor $\mathscr{U} \in \mathbb{C}^{I_1 \times \dots \times I_N \times I_1 \times \dots \times I_N}$  and a pseudo-diagonal tensor $\mathscr{D} \in \mathbb{C}^{I_1 \times \dots \times I_N \times I_1 \times \dots \times I_N}$ (whose non-zero values are the tensor eigenvalues of $\mathscr{A}$) such that (see \cite{pandey2023linear}) :
\begin{equation}\label{EVDTensor}
\mathscr{A} = \mathscr{U}*_N \mathscr{D} *_N \mathscr{U}^H.
\end{equation}
Then, we can relate the asymptotic distribution of the tensor eigenvalues to the semi-circular distribution in Theorem~\ref{th:semicircle}. 
Figure~\ref{fig:semicircle} illustrates the result of Theorem~\ref{th:semicircle} and shows the accuracy of the limiting distribution as the tensor sizes increases.
\begin{theorem}\label{th:semicircle}
Given an Hermitian tensor $\pmb{\mathscr{A}} \in \mathbb{C}^{I_1 \times \dots \times I_N \times I_1 \times \dots \times I_N }$, with i.i.d. entries having zero-mean and unit variance, as $\prod_k I_k\rightarrow\infty$, the distribution of the eigenvalues of $\frac{1}{\sqrt{\prod_{k=1}^N I_k}}\pmb{\mathscr{A}}$ converges\footnote{The convergence considered in this section is the weak convergence; see \cite{RandomMatrixForWireless} for details.} almost surely to the semi circle distribution of parameter $\beta = 2$ with density equal to \eqref{eq:semicircle}.
\end{theorem}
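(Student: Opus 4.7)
The plan is to reduce the statement to the classical Wigner semicircle law via the tensor-to-matrix mapping $f := f_{I_1,\dots,I_N|I_1,\dots,I_N}$ defined in \eqref{tranformprop}, which provides an isomorphism between $\mathbb{C}^{I_1\times\dots\times I_N\times I_1\times\dots\times I_N}$ and $\mathbb{C}^{I\times I}$ with $I := \prod_{k=1}^N I_k$.

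First, I would establish that the tensor eigenvalues of $\pmb{\mathscr{A}}$ coincide (counted with multiplicity) with the matrix eigenvalues of $\text{A} := f(\pmb{\mathscr{A}})$. Applying $f$ to both sides of the tensor EVD \eqref{EVDTensor} and using the compatibility of $f$ with the Einstein product (stated right after \eqref{tranformprop}), the relation $\mathscr{A} = \mathscr{U}*_N\mathscr{D}*_N\mathscr{U}^H$ transforms into $\text{A} = \text{U}\,\text{D}\,\text{U}^H$, where $\text{U} = f(\mathscr{U})$ is unitary and $\text{D} = f(\mathscr{D})$ is diagonal with the tensor eigenvalues on its diagonal. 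Thus the empirical spectral distribution of $\frac{1}{\sqrt{I}}\pmb{\mathscr{A}}$ is precisely that of $\frac{1}{\sqrt{I}}\text{A}$.

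Next I would verify that $\text{A}$ is a standard complex Wigner matrix. Because $f$ is bijective and acts by reindexing entries, the Hermitian condition $\pmb{\mathscr{A}} = \pmb{\mathscr{A}}^H$ (with partition after the first $N$ modes) translates directly, via the index formula in \eqref{tranformprop}, into $\text{A} = \text{A}^H$. The i.i.d.\ assumption on the independent tensor entries (i.e.\ on the representatives modulo Hermitian symmetry) translates into the usual Wigner hypothesis: independent real diagonal entries and independent complex upper-triangular entries, all zero-mean with unit variance.

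Finally, I would invoke the complex Wigner semicircle theorem for Hermitian matrices: the empirical spectral distribution of $\frac{1}{\sqrt{I}}\text{A}$ converges almost surely, as $I\to\infty$, to the semicircle distribution of parameter $\beta = 2$ given by \eqref{eq:semicircle}. Combined with the identification in the first step and the fact that $\prod_k I_k\to\infty$ is equivalent to $I\to\infty$, this yields the claim. I do not expect any substantive obstacle here: the argument is essentially bookkeeping on top of two off-the-shelf ingredients (the $f$--isomorphism from \cite{pandey2023linear} and Wigner's law). The only point requiring mild care is checking that the Hermitian symmetry on the tensor corresponds bijectively to the Hermitian symmetry on the matrix, so that the i.i.d.\ statement on the tensor does produce a genuine Wigner ensemble on the matrix side rather than a degenerate set of identifications.
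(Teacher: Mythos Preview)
Your proposal is correct and follows essentially the same approach as the paper: matricize via $f_{I_1,\dots,I_N|I_1,\dots,I_N}$, identify the tensor eigenvalues with the matrix eigenvalues of the resulting Hermitian matrix, and then invoke the classical Wigner semicircle law. The paper's proof is terser (it cites \cite[Th.~2.11]{RandomMatrixForWireless} directly without spelling out the Wigner-ensemble verification), but your additional checks on the Hermitian structure and i.i.d.\ hypotheses are exactly the bookkeeping implicit in that citation.
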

\begin{proof}
Define $\textbf{A}=f_{I_1,...,I_N|I_1,...,I_N}(\pmb{\mathscr{A}})$ with $f_{I_1,...,I_N|J_1,...,J_N}$ being the matricization operator defined in \eqref{tranformprop}.
Then the tensor eigenvalues of $\pmb{\mathscr{A}}$ are equal to the eigenvalues of $\textbf{A}$. We can therefore apply \cite[Th. 2.11]{RandomMatrixForWireless} and the result follows.
\end{proof}

\begin{figure}
\include{subfig}
\centering
\subfloat[Tensor of size $10\times 10\times 10 \times 10$]{
%
%
\definecolor{mycolor1}{rgb}{0.00000,0.44700,0.74100}%
\begin{tikzpicture}
\scriptsize
\begin{axis}[%
width=2.621in,
height=1.866in,
at={(0.758in,0.481in)},
scale only axis,
xmin=-2.5,
xmax=2.5,
ymin=0,
ymax=0.4,
axis background/.style={fill=white},
legend style={legend cell align=left, align=left, draw=white!15!black}
]
\addplot [color=black, line width=2.0pt]
  table[row sep=crcr]{%
-2	0\\
-1.95959595959596	0.06365872942525\\
-1.91919191919192	0.0895665391360429\\
-1.87878787878788	0.109129250443286\\
-1.83838383838384	0.125353575699109\\
-1.7979797979798	0.139409977575227\\
-1.75757575757576	0.151901487890119\\
-1.71717171717172	0.163187865565038\\
-1.67676767676768	0.17350445003183\\
-1.63636363636364	0.183015316743589\\
-1.5959595959596	0.191840338337376\\
-1.55555555555556	0.200070292479357\\
-1.51515151515152	0.207775902647265\\
-1.47474747474747	0.215013550218888\\
-1.43434343434343	0.221829044769409\\
-1.39393939393939	0.228260203928808\\
-1.35353535353535	0.234338672780801\\
-1.31313131313131	0.240091240349262\\
-1.27272727272727	0.245540813497393\\
-1.23232323232323	0.250707151398219\\
-1.19191919191919	0.255607428883026\\
-1.15151515151515	0.260256675047744\\
-1.11111111111111	0.264668119316167\\
-1.07070707070707	0.268853467759932\\
-1.03030303030303	0.272823126108214\\
-0.98989898989899	0.276586381481645\\
-0.949494949494949	0.280151551792053\\
-0.909090909090909	0.283526109539519\\
-0.868686868686869	0.286716785135556\\
-0.828282828282828	0.289729653703338\\
-0.787878787878788	0.292570208429322\\
-0.747474747474747	0.295243422880904\\
-0.707070707070707	0.29775380420277\\
-0.666666666666667	0.300105438719035\\
-0.626262626262626	0.302302031169163\\
-0.585858585858586	0.304346938571744\\
-0.545454545454545	0.306243199525667\\
-0.505050505050505	0.307993559611523\\
-0.464646464646465	0.309600493438511\\
-0.424242424242424	0.311066223787309\\
-0.383838383838384	0.312392738222188\\
-0.343434343434343	0.313581803482522\\
-0.303030303030303	0.314634977911674\\
-0.262626262626263	0.31555362213789\\
-0.222222222222222	0.316338908185412\\
-0.181818181818182	0.316991827163208\\
-0.141414141414141	0.317513195652296\\
-0.101010101010101	0.317903660889791\\
-0.0606060606060606	0.318163704827662\\
-0.0202020202020201	0.31829364712625\\
0.0202020202020203	0.31829364712625\\
0.0606060606060606	0.318163704827662\\
0.101010101010101	0.317903660889791\\
0.141414141414141	0.317513195652296\\
0.181818181818182	0.316991827163208\\
0.222222222222222	0.316338908185412\\
0.262626262626263	0.31555362213789\\
0.303030303030303	0.314634977911674\\
0.343434343434343	0.313581803482522\\
0.383838383838384	0.312392738222188\\
0.424242424242424	0.311066223787309\\
0.464646464646465	0.309600493438511\\
0.505050505050505	0.307993559611523\\
0.545454545454545	0.306243199525667\\
0.585858585858586	0.304346938571744\\
0.626262626262626	0.302302031169163\\
0.666666666666667	0.300105438719035\\
0.707070707070707	0.29775380420277\\
0.747474747474747	0.295243422880904\\
0.787878787878788	0.292570208429322\\
0.828282828282828	0.289729653703338\\
0.868686868686869	0.286716785135556\\
0.909090909090909	0.283526109539519\\
0.949494949494949	0.280151551792053\\
0.98989898989899	0.276586381481645\\
1.03030303030303	0.272823126108214\\
1.07070707070707	0.268853467759932\\
1.11111111111111	0.264668119316167\\
1.15151515151515	0.260256675047744\\
1.19191919191919	0.255607428883026\\
1.23232323232323	0.250707151398219\\
1.27272727272727	0.245540813497393\\
1.31313131313131	0.240091240349262\\
1.35353535353535	0.234338672780801\\
1.39393939393939	0.228260203928808\\
1.43434343434343	0.221829044769409\\
1.47474747474747	0.215013550218888\\
1.51515151515152	0.207775902647265\\
1.55555555555556	0.200070292479357\\
1.5959595959596	0.191840338337376\\
1.63636363636364	0.183015316743589\\
1.67676767676768	0.173504450031829\\
1.71717171717172	0.163187865565038\\
1.75757575757576	0.151901487890119\\
1.7979797979798	0.139409977575227\\
1.83838383838384	0.125353575699109\\
1.87878787878788	0.109129250443286\\
1.91919191919192	0.0895665391360429\\
1.95959595959596	0.0636587294252498\\
2	0\\
};
\addlegendentry{Semi-circle distribution \eqref{eq:semicircle} with $\beta=2$}

\addplot[ybar interval, fill=mycolor1, fill opacity=0.6, draw=black, area legend] table[row sep=crcr] {%
x	y\\
-2	0.115384615384615\\
-1.74	0.230769230769231\\
-1.48	0.230769230769231\\
-1.22	0.269230769230769\\
-0.96	0.269230769230769\\
-0.7	0.307692307692308\\
-0.44	0.269230769230769\\
-0.18	0.307692307692308\\
0.0800000000000001	0.346153846153846\\
0.34	0.307692307692307\\
0.6	0.307692307692307\\
0.86	0.307692307692308\\
1.12	0.192307692307692\\
1.38	0.230769230769231\\
1.64	0.153846153846154\\
1.9	0.153846153846154\\
};
\addlegendentry{Empirical distribution histogram}

\end{axis}
\end{tikzpicture}
\hspace{0.2cm}
\subfloat[Tensor of size $30\times 30\times 30 \times 30$]{
%
%
\definecolor{mycolor1}{rgb}{0.00000,0.44700,0.74100}%
\begin{tikzpicture}
\scriptsize
\begin{axis}[%
width=2.621in,
height=1.866in,
at={(0.758in,0.481in)},
scale only axis,
xmin=-2.5,
xmax=2.5,
ymin=0,
ymax=0.4,
axis background/.style={fill=white},
legend style={legend cell align=left, align=left, draw=white!15!black}
]
\addplot [color=black, line width=2.0pt]
  table[row sep=crcr]{%
-2	0\\
-1.95959595959596	0.06365872942525\\
-1.91919191919192	0.0895665391360429\\
-1.87878787878788	0.109129250443286\\
-1.83838383838384	0.125353575699109\\
-1.7979797979798	0.139409977575227\\
-1.75757575757576	0.151901487890119\\
-1.71717171717172	0.163187865565038\\
-1.67676767676768	0.17350445003183\\
-1.63636363636364	0.183015316743589\\
-1.5959595959596	0.191840338337376\\
-1.55555555555556	0.200070292479357\\
-1.51515151515152	0.207775902647265\\
-1.47474747474747	0.215013550218888\\
-1.43434343434343	0.221829044769409\\
-1.39393939393939	0.228260203928808\\
-1.35353535353535	0.234338672780801\\
-1.31313131313131	0.240091240349262\\
-1.27272727272727	0.245540813497393\\
-1.23232323232323	0.250707151398219\\
-1.19191919191919	0.255607428883026\\
-1.15151515151515	0.260256675047744\\
-1.11111111111111	0.264668119316167\\
-1.07070707070707	0.268853467759932\\
-1.03030303030303	0.272823126108214\\
-0.98989898989899	0.276586381481645\\
-0.949494949494949	0.280151551792053\\
-0.909090909090909	0.283526109539519\\
-0.868686868686869	0.286716785135556\\
-0.828282828282828	0.289729653703338\\
-0.787878787878788	0.292570208429322\\
-0.747474747474747	0.295243422880904\\
-0.707070707070707	0.29775380420277\\
-0.666666666666667	0.300105438719035\\
-0.626262626262626	0.302302031169163\\
-0.585858585858586	0.304346938571744\\
-0.545454545454545	0.306243199525667\\
-0.505050505050505	0.307993559611523\\
-0.464646464646465	0.309600493438511\\
-0.424242424242424	0.311066223787309\\
-0.383838383838384	0.312392738222188\\
-0.343434343434343	0.313581803482522\\
-0.303030303030303	0.314634977911674\\
-0.262626262626263	0.31555362213789\\
-0.222222222222222	0.316338908185412\\
-0.181818181818182	0.316991827163208\\
-0.141414141414141	0.317513195652296\\
-0.101010101010101	0.317903660889791\\
-0.0606060606060606	0.318163704827662\\
-0.0202020202020201	0.31829364712625\\
0.0202020202020203	0.31829364712625\\
0.0606060606060606	0.318163704827662\\
0.101010101010101	0.317903660889791\\
0.141414141414141	0.317513195652296\\
0.181818181818182	0.316991827163208\\
0.222222222222222	0.316338908185412\\
0.262626262626263	0.31555362213789\\
0.303030303030303	0.314634977911674\\
0.343434343434343	0.313581803482522\\
0.383838383838384	0.312392738222188\\
0.424242424242424	0.311066223787309\\
0.464646464646465	0.309600493438511\\
0.505050505050505	0.307993559611523\\
0.545454545454545	0.306243199525667\\
0.585858585858586	0.304346938571744\\
0.626262626262626	0.302302031169163\\
0.666666666666667	0.300105438719035\\
0.707070707070707	0.29775380420277\\
0.747474747474747	0.295243422880904\\
0.787878787878788	0.292570208429322\\
0.828282828282828	0.289729653703338\\
0.868686868686869	0.286716785135556\\
0.909090909090909	0.283526109539519\\
0.949494949494949	0.280151551792053\\
0.98989898989899	0.276586381481645\\
1.03030303030303	0.272823126108214\\
1.07070707070707	0.268853467759932\\
1.11111111111111	0.264668119316167\\
1.15151515151515	0.260256675047744\\
1.19191919191919	0.255607428883026\\
1.23232323232323	0.250707151398219\\
1.27272727272727	0.245540813497393\\
1.31313131313131	0.240091240349262\\
1.35353535353535	0.234338672780801\\
1.39393939393939	0.228260203928808\\
1.43434343434343	0.221829044769409\\
1.47474747474747	0.215013550218888\\
1.51515151515152	0.207775902647265\\
1.55555555555556	0.200070292479357\\
1.5959595959596	0.191840338337376\\
1.63636363636364	0.183015316743589\\
1.67676767676768	0.173504450031829\\
1.71717171717172	0.163187865565038\\
1.75757575757576	0.151901487890119\\
1.7979797979798	0.139409977575227\\
1.83838383838384	0.125353575699109\\
1.87878787878788	0.109129250443286\\
1.91919191919192	0.0895665391360429\\
1.95959595959596	0.0636587294252498\\
2	0\\
};
\addlegendentry{Semi-circle distribution \eqref{eq:semicircle} with $\beta=2$}

\addplot[ybar interval, fill=mycolor1, fill opacity=0.6, draw=black, area legend] table[row sep=crcr] {%
x	y\\
-2	0.0763888888888889\\
-1.84	0.152777777777778\\
-1.68	0.1875\\
-1.52	0.222222222222222\\
-1.36	0.256944444444445\\
-1.2	0.256944444444445\\
-1.04	0.277777777777778\\
-0.88	0.284722222222222\\
-0.72	0.305555555555556\\
-0.56	0.305555555555555\\
-0.4	0.326388888888889\\
-0.24	0.3125\\
-0.0800000000000001	0.326388888888889\\
0.0800000000000001	0.3125\\
0.24	0.312500000000001\\
0.4	0.3125\\
0.56	0.298611111111111\\
0.72	0.291666666666667\\
0.88	0.277777777777778\\
1.04	0.263888888888889\\
1.2	0.243055555555556\\
1.36	0.215277777777778\\
1.52	0.1875\\
1.68	0.159722222222223\\
1.84	0.0833333333333333\\
2	0.0833333333333333\\
};
\addlegendentry{Empirical distribution histogram}

\end{axis}
\end{tikzpicture}
\caption{Empirical tensor eigenvalues histogram and its asymptotic distribution for Hermitian random tensors}
\label{fig:semicircle}
\end{figure}
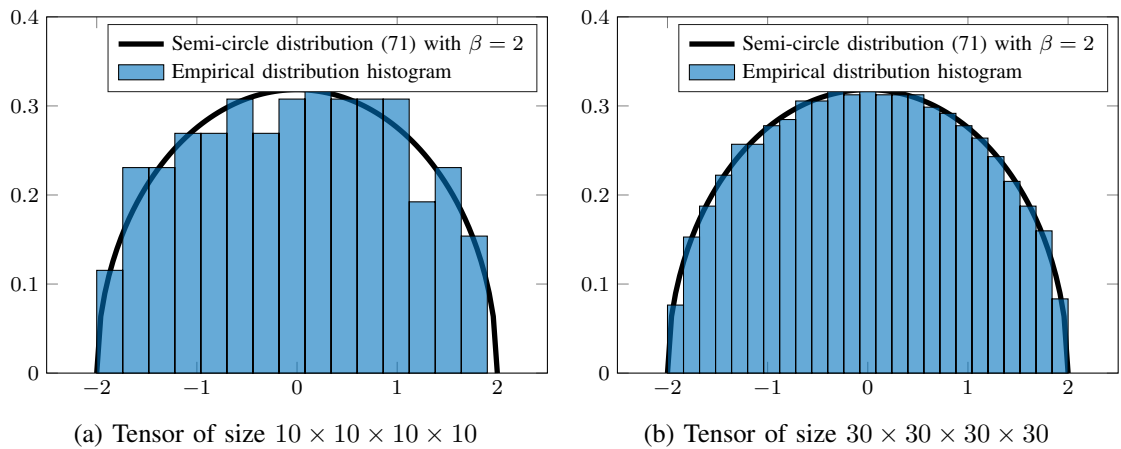

\subsection{Tensor singular values and singular vectors}\label{sec:svd}
The SVD of a tensor $\mathscr{A} \in \mathbb{C}^{I_1 \times \dots \times I_N \times J_1 \times \dots \times J_M}$ is characterized by the existence of unitary tensors $\mathscr{U} \in \mathbb{C}^{I_1 \times \dots \times I_N \times I_1 \times \dots \times I_N}$ and $\mathscr{V} \in \mathbb{C}^{J_1 \times \dots \times J_M \times J_1 \times \dots \times J_M}$ and a pseudo-diagonal tensor $\mathscr{D} \in \mathbb{C}^{I_1 \times \dots \times I_N \times J_1 \times \dots \times J_M}$ (whose non-zero values are the tensor singular values of $\mathscr{A}$) such that (see \cite{pandey2023linear}) :
\begin{equation}\label{SVDTensor}
\mathscr{A} = \mathscr{U}*_N \mathscr{D} *_M \mathscr{V}^H.
\end{equation}
Then, we can relate the asymptotic distribution of the tensor singular values to the Marcenko-Pastur distribution.
\begin{theorem}\label{th:marcenko}
Given a tensor, $\pmb{\mathscr{A}} \in \mathbb{C}^{I_1 \times \dots \times I_N \times J_1 \times \dots \times J_M }$, with i.i.d entries with zero-mean and unit variance, as $\prod_k I_k, \prod_k J_k\rightarrow\infty$ with $\frac{\prod_k I_k}{\prod_k J_k}\rightarrow c\in (0,+\infty)$, the distribution of the squared singular values of $\frac{1}{\sqrt{\prod_{k=1}^N J_k}}\pmb{\mathscr{A}}$ converge almost surely to the Marcenko-Pastur distribution of density \eqref{eq:marcenko}.
\end{theorem}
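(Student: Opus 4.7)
The plan is to reduce Theorem~\ref{th:marcenko} to the classical matrix Marcenko-Pastur theorem via the bijective matricization \eqref{tranformprop}, in direct parallel to the proof of Theorem~\ref{th:semicircle}. First I would set $\textbf{A} = f_{I_1,\dots,I_N|J_1,\dots,J_M}(\pmb{\mathscr{A}})$ and denote $\bar I = \prod_{k=1}^N I_k$ and $\bar J = \prod_{k=1}^M J_k$, so that $\textbf{A}$ is a matrix of size $\bar I \times \bar J$ with $\bar I/\bar J \to c$. Since the map in \eqref{tranformprop} is an entrywise bijection, $\textbf{A}$ inherits from $\pmb{\mathscr{A}}$ the property of having i.i.d. entries with zero mean and unit variance.

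Next I would invoke the SVD characterization \eqref{SVDTensor} together with the Einstein-product/matrix-product isomorphism stated just after \eqref{tranformprop}, namely that $f_{I_1,\dots,I_N|K_1,\dots,K_P}(\mathscr{X}*_M\mathscr{Y}) = f_{I_1,\dots,I_N|J_1,\dots,J_M}(\mathscr{X}) \cdot f_{J_1,\dots,J_M|K_1,\dots,K_P}(\mathscr{Y})$. Applying this isomorphism to both Einstein contractions in $\mathscr{A} = \mathscr{U} *_N \mathscr{D} *_M \mathscr{V}^H$, and using that the matricization of a unitary tensor is a unitary matrix and the matricization of a pseudo-diagonal tensor is a diagonal matrix with the same non-zero entries, the tensor SVD of $\pmb{\mathscr{A}}$ turns into a genuine matrix SVD of $\textbf{A}$. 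Consequently, the tensor singular values of $\pmb{\mathscr{A}}$ coincide with the matrix singular values of $\textbf{A}$, and the squared singular values of $\tfrac{1}{\sqrt{\bar J}}\pmb{\mathscr{A}}$ are exactly the eigenvalues of the Wishart-type matrix $\tfrac{1}{\bar J}\textbf{A}\textbf{A}^H$.

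Finally I would apply the classical matrix Marcenko-Pastur theorem (see the same reference \cite{RandomMatrixForWireless} used in Theorem~\ref{th:semicircle}) to $\tfrac{1}{\bar J}\textbf{A}\textbf{A}^H$: under the hypotheses $\bar I, \bar J \to \infty$ with $\bar I/\bar J \to c \in (0,\infty)$, its empirical spectral distribution converges almost surely in the weak sense to the Marcenko-Pastur distribution with density \eqref{eq:marcenko}, which is exactly the claim.

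There is no real obstacle: the whole argument rests on the observation that the matricization preserves both the i.i.d. structure of the entries and, through the Einstein-product isomorphism, the singular values. The main point requiring attention is the joint-limit regime $\bar I, \bar J \to \infty$ with $\bar I/\bar J \to c$; because each $I_k, J_k \geq 1$ this is already a scalar condition on the products and matches the hypothesis of the matrix Marcenko-Pastur theorem verbatim, so the tensor statement follows as a corollary with the same $1/\sqrt{\bar J}$ normalization.
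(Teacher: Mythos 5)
Your proposal is correct and follows essentially the same route as the paper: matricize $\pmb{\mathscr{A}}$ via \eqref{tranformprop}, note that the tensor singular values coincide with the matrix singular values (which the paper asserts directly and you justify via the Einstein-product isomorphism applied to the SVD), and then invoke the classical Marcenko--Pastur theorem from \cite{RandomMatrixForWireless}.
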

\begin{proof}
Define $\textbf{A}=f_{I_1,...,I_N|J_1,...,J_N}(\pmb{\mathscr{A}})$ where $f_{I_1,...,I_N|J_1,...,J_N}$ denotes the matricization operator defined in \eqref{tranformprop}. Then the tensor singular values of $\pmb{\mathscr{A}}$ are equal to the singular values of $\textbf{A}$. We can therefore apply Theorem 2.13 in \cite{RandomMatrixForWireless} to get the result.
\end{proof}

Similarly to the tensor eigenvalues, we illustrate the result of Theorem~\ref{th:marcenko} in Figure~\ref{fig:marcenko}. 
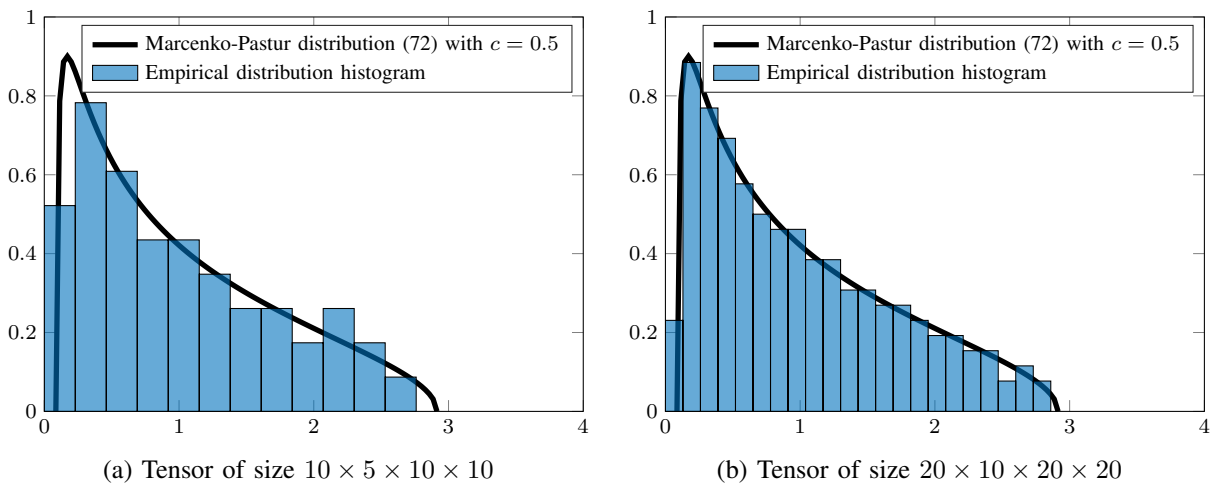
\begin{figure}
\centering
\subfloat[Tensor of size $10\times 5\times 10 \times 10$]{
%
%
\definecolor{mycolor1}{rgb}{0.00000,0.44700,0.74100}%
\begin{tikzpicture}
\scriptsize
\begin{axis}[%
width=2.821in,
height=2.066in,
at={(0.758in,0.481in)},
scale only axis,
xmin=0,
xmax=4,
ymin=0,
ymax=1,
axis background/.style={fill=white},
legend style={legend cell align=left, align=left, draw=white!15!black}
]
\addplot [color=black, line width=2.0pt]
  table[row sep=crcr]{%
0.0857864376269049	0\\
0.114356408583937	0.787249614004345\\
0.142926379540969	0.886233981353344\\
0.171496350498002	0.899914578825413\\
0.200066321455034	0.886089800403947\\
0.228636292412066	0.862310523570274\\
0.257206263369098	0.835209615446203\\
0.28577623432613	0.807563628378639\\
0.314346205283163	0.780579953704442\\
0.342916176240195	0.754769710483123\\
0.371486147197227	0.730318506716542\\
0.400056118154259	0.70725607786645\\
0.428626089111292	0.685538064347222\\
0.457196060068324	0.665086831168446\\
0.485766031025356	0.64581222976621\\
0.514336001982388	0.627622167030847\\
0.54290597293942	0.610427856302289\\
0.571475943896453	0.594146248735935\\
0.600045914853485	0.578700962657039\\
0.628615885810517	0.56402242083675\\
0.657185856767549	0.55004758371961\\
0.685755827724582	0.536719492084972\\
0.714325798681614	0.523986736242092\\
0.742895769638646	0.511802914939299\\
0.771465740595678	0.500126116777846\\
0.80003571155271	0.48891843977801\\
0.828605682509743	0.478145555150151\\
0.857175653466775	0.467776316063098\\
0.885745624423807	0.457782409438925\\
0.914315595380839	0.448138047473916\\
0.942885566337872	0.438819695070829\\
0.971455537294904	0.429805829304649\\
1.00002550825194	0.421076727220498\\
1.02859547920897	0.412614278554655\\
1.057165450166	0.404401820306589\\
1.08573542112303	0.396423990431936\\
1.11430539208007	0.388666598252097\\
1.1428753630371	0.381116509475446\\
1.17144533399413	0.373761543993941\\
1.20001530495116	0.366590384856883\\
1.22858527590819	0.35959249703209\\
1.25715524686523	0.352758054746392\\
1.28572521782226	0.346077876354729\\
1.31429518877929	0.3395433658232\\
1.34286515973632	0.333146460028757\\
1.37143513069335	0.32687958117931\\
1.40000510165039	0.320735593745024\\
1.42857507260742	0.314707765366383\\
1.45714504356445	0.30878973126895\\
1.48571501452148	0.302975461769912\\
1.51428498547852	0.297259232508929\\
1.54285495643555	0.291635597076261\\
1.57142492739258	0.286099361745763\\
1.59999489834961	0.28064556204965\\
1.62856486930664	0.275269440956623\\
1.65713484026368	0.269966428435554\\
1.68570481122071	0.264732122203648\\
1.71427478217774	0.259562269471403\\
1.74284475313477	0.254452749506659\\
1.77141472409181	0.249399556846966\\
1.79998469504884	0.244398784993147\\
1.82855466600587	0.239446610417464\\
1.8571246369629	0.234539276716774\\
1.88569460791993	0.229673078734347\\
1.91426457887697	0.224844346463015\\
1.942834549834	0.220049428526425\\
1.97140452079103	0.215284675013371\\
1.99997449174806	0.210546419411426\\
2.0285444627051	0.205830959348476\\
2.05711443366213	0.201134535802411\\
2.08568440461916	0.196453310376906\\
2.11425437557619	0.191783340161333\\
2.14282434653323	0.187120549589784\\
2.17139431749026	0.182460698581048\\
2.19996428844729	0.177799346068073\\
2.22853425940432	0.173131807798365\\
2.25710423036135	0.168453106986673\\
2.28567420131839	0.163757916000806\\
2.31424417227542	0.159040486721314\\
2.34281414323245	0.154294566478476\\
2.37138411418948	0.149513295450437\\
2.39995408514651	0.144689079975851\\
2.42852405610355	0.139813434194958\\
2.45709402706058	0.134876779474157\\
2.48566399801761	0.129868186691209\\
2.51423396897464	0.124775039838121\\
2.54280393993168	0.119582589140439\\
2.57137391088871	0.114273345550324\\
2.59994388184574	0.10882624160772\\
2.62851385280277	0.103215437878583\\
2.6570838237598	0.0974085727918642\\
2.68565379471684	0.0913641016760201\\
2.71422376567387	0.0850270694757905\\
2.7427937366309	0.0783220194248854\\
2.77136370758793	0.0711402406249339\\
2.79993367854497	0.0633146164154031\\
2.828503649502	0.054563148983625\\
2.85707362045903	0.0443342493780967\\
2.88564359141606	0.0311982528904233\\
2.91421356237309	0\\
};
\addlegendentry{Marcenko-Pastur distribution \eqref{eq:marcenko} with $c=0.5$}

\addplot[ybar interval, fill=mycolor1, fill opacity=0.6, draw=black, area legend] table[row sep=crcr] {%
x	y\\
0	0.521739130434783\\
0.23	0.782608695652174\\
0.46	0.608695652173913\\
0.69	0.434782608695652\\
0.92	0.434782608695652\\
1.15	0.347826086956522\\
1.38	0.260869565217391\\
1.61	0.260869565217391\\
1.84	0.173913043478261\\
2.07	0.260869565217391\\
2.3	0.173913043478261\\
2.53	0.0869565217391304\\
2.76	0.0869565217391304\\
};
\addlegendentry{Empirical distribution histogram}

\end{axis}
\end{tikzpicture}
\hspace{0.2cm}
\subfloat[Tensor of size $20\times 10\times 20 \times 20$]{
%
%
\definecolor{mycolor1}{rgb}{0.00000,0.44700,0.74100}%
\begin{tikzpicture}
\scriptsize
\begin{axis}[%
width=2.821in,
height=2.066in,
at={(0.758in,0.481in)},
scale only axis,
xmin=0,
xmax=4,
ymin=0,
ymax=1,
axis background/.style={fill=white},
legend style={legend cell align=left, align=left, draw=white!15!black}
]
\addplot [color=black, line width=2.0pt]
  table[row sep=crcr]{%
0.0857864376269049	0\\
0.114356408583937	0.787249614004345\\
0.142926379540969	0.886233981353344\\
0.171496350498002	0.899914578825413\\
0.200066321455034	0.886089800403947\\
0.228636292412066	0.862310523570274\\
0.257206263369098	0.835209615446203\\
0.28577623432613	0.807563628378639\\
0.314346205283163	0.780579953704442\\
0.342916176240195	0.754769710483123\\
0.371486147197227	0.730318506716542\\
0.400056118154259	0.70725607786645\\
0.428626089111292	0.685538064347222\\
0.457196060068324	0.665086831168446\\
0.485766031025356	0.64581222976621\\
0.514336001982388	0.627622167030847\\
0.54290597293942	0.610427856302289\\
0.571475943896453	0.594146248735935\\
0.600045914853485	0.578700962657039\\
0.628615885810517	0.56402242083675\\
0.657185856767549	0.55004758371961\\
0.685755827724582	0.536719492084972\\
0.714325798681614	0.523986736242092\\
0.742895769638646	0.511802914939299\\
0.771465740595678	0.500126116777846\\
0.80003571155271	0.48891843977801\\
0.828605682509743	0.478145555150151\\
0.857175653466775	0.467776316063098\\
0.885745624423807	0.457782409438925\\
0.914315595380839	0.448138047473916\\
0.942885566337872	0.438819695070829\\
0.971455537294904	0.429805829304649\\
1.00002550825194	0.421076727220498\\
1.02859547920897	0.412614278554655\\
1.057165450166	0.404401820306589\\
1.08573542112303	0.396423990431936\\
1.11430539208007	0.388666598252097\\
1.1428753630371	0.381116509475446\\
1.17144533399413	0.373761543993941\\
1.20001530495116	0.366590384856883\\
1.22858527590819	0.35959249703209\\
1.25715524686523	0.352758054746392\\
1.28572521782226	0.346077876354729\\
1.31429518877929	0.3395433658232\\
1.34286515973632	0.333146460028757\\
1.37143513069335	0.32687958117931\\
1.40000510165039	0.320735593745024\\
1.42857507260742	0.314707765366383\\
1.45714504356445	0.30878973126895\\
1.48571501452148	0.302975461769912\\
1.51428498547852	0.297259232508929\\
1.54285495643555	0.291635597076261\\
1.57142492739258	0.286099361745763\\
1.59999489834961	0.28064556204965\\
1.62856486930664	0.275269440956623\\
1.65713484026368	0.269966428435554\\
1.68570481122071	0.264732122203648\\
1.71427478217774	0.259562269471403\\
1.74284475313477	0.254452749506659\\
1.77141472409181	0.249399556846966\\
1.79998469504884	0.244398784993147\\
1.82855466600587	0.239446610417464\\
1.8571246369629	0.234539276716774\\
1.88569460791993	0.229673078734347\\
1.91426457887697	0.224844346463015\\
1.942834549834	0.220049428526425\\
1.97140452079103	0.215284675013371\\
1.99997449174806	0.210546419411426\\
2.0285444627051	0.205830959348476\\
2.05711443366213	0.201134535802411\\
2.08568440461916	0.196453310376906\\
2.11425437557619	0.191783340161333\\
2.14282434653323	0.187120549589784\\
2.17139431749026	0.182460698581048\\
2.19996428844729	0.177799346068073\\
2.22853425940432	0.173131807798365\\
2.25710423036135	0.168453106986673\\
2.28567420131839	0.163757916000806\\
2.31424417227542	0.159040486721314\\
2.34281414323245	0.154294566478476\\
2.37138411418948	0.149513295450437\\
2.39995408514651	0.144689079975851\\
2.42852405610355	0.139813434194958\\
2.45709402706058	0.134876779474157\\
2.48566399801761	0.129868186691209\\
2.51423396897464	0.124775039838121\\
2.54280393993168	0.119582589140439\\
2.57137391088871	0.114273345550324\\
2.59994388184574	0.10882624160772\\
2.62851385280277	0.103215437878583\\
2.6570838237598	0.0974085727918642\\
2.68565379471684	0.0913641016760201\\
2.71422376567387	0.0850270694757905\\
2.7427937366309	0.0783220194248854\\
2.77136370758793	0.0711402406249339\\
2.79993367854497	0.0633146164154031\\
2.828503649502	0.054563148983625\\
2.85707362045903	0.0443342493780967\\
2.88564359141606	0.0311982528904233\\
2.91421356237309	0\\
};
\addlegendentry{Marcenko-Pastur distribution \eqref{eq:marcenko} with $c=0.5$}

\addplot[ybar interval, fill=mycolor1, fill opacity=0.6, draw=black, area legend] table[row sep=crcr] {%
x	y\\
0	0.230769230769231\\
0.13	0.884615384615385\\
0.26	0.769230769230769\\
0.39	0.692307692307692\\
0.52	0.576923076923077\\
0.65	0.5\\
0.78	0.461538461538462\\
0.91	0.461538461538462\\
1.04	0.384615384615385\\
1.17	0.384615384615384\\
1.3	0.307692307692307\\
1.43	0.307692307692308\\
1.56	0.26923076923077\\
1.69	0.269230769230769\\
1.82	0.230769230769231\\
1.95	0.192307692307692\\
2.08	0.192307692307692\\
2.21	0.153846153846154\\
2.34	0.153846153846153\\
2.47	0.076923076923077\\
2.6	0.115384615384615\\
2.73	0.0769230769230767\\
2.86	0.0769230769230767\\
};
\addlegendentry{Empirical distribution histogram}

\end{axis}
\end{tikzpicture}
\caption{Empirical tensor singular values histogram and its asymptotic distribution}
\label{fig:marcenko}
\end{figure}

\section{Spiked random tensors}\label{sec7}
We present now asymptotic properties of estimators of a rank-1 tensor ``signal'' perturbed by additive noise through the analysis of Z-eigenvalues and Z-singular values in the context of symmetric and asymmetric tensors respectively.
The studies of spiked tensors have been only developed in the context of real tensors and their extension to complex tensors is still missing to the best of our knowledge.
Note that properties of tensor eigenvalues and singular values of the random tensor underlying the spiked model have been provided in \cite{benarous21}.

\subsection{Tensor Z-eigenvalues and Z- singular values}\label{sec:zeigen}
Let us first define Z-singular values, Z-eigenvalues, and their associated Z-singular vectors and Z-eigenvectors through the variational definition considered in \cite{limsingular}.
\begin{definition}
Given a vector $\myvec{x}\in \mathbb{R}^I$, we define $\myvec{x}^{P}$ as the iterative tensor outer product, i.e.
\begin{equation}
\myvec{x}^{P} = \underbrace{ \myvec{x}\circ \dots \circ \myvec{x}}_{P\text{\ times}}.
\end{equation}
where $\myvec{x}^{P}$ is an order $P$ tensor of size $I \times \dots \times I$.
\end{definition}

\begin{definition}
Given an order-$N$ symmetric tensor $\mathscr{A} \in \mathbb{R}^{I \times \dots \times I}$, the tuple $(\lambda, {\myvec{u}})\in\mathbb{R}^+\times \mathbb{S}_{\mathbb{R}}^{I}$ are coupled Z-eigenvalues and Z-eigenvectors\footnote{These eigenvalues are alternatively called $\ell_2$-eigenvalues in \cite{de2022random}.} if and only if
\begin{equation}\label{eq:defzeig}
\mathscr{A} *_{N-1} \myvec{u}^{N-1}= \lambda\myvec{u}
\end{equation}
where we noted $\mathbb{S}_{\mathbb{R}}^{I}$ the real unit sphere in $\mathbb{R}^I$.
\begin{definition}
Given an order-$N$ tensor $\mathscr{A} \in \mathbb{C}^{I_1 \times \dots \times I_N}$, the tuple $(\lambda, {\myvec{u}_1}, \dots,{\myvec{u}}_N)\in\mathbb{R}^+\times \mathbb{S}_{\mathbb{R}}^{I_1}\times\dots\times \mathbb{S}_{\mathbb{R}}^{I_N}$ contains coupled Z-singular values and Z-singular vectors if and only if
\begin{equation}\label{eq:defzsing}
\mathscr{A} *_{N} \left(\myvec{u}_1\circ\dots\circ \myvec{u}_{i-1}\circ \text{e}_{i,j} \circ \myvec{u}_{i+1}\circ\dots\circ\myvec{u}_{N}\right) = \lambda u_{i,j} \text{\ \quad for $i=1,\dots,N$}.
\end{equation}
where $u_{i,j}$ denotes the $j$-th element of $\myvec{u}_{i}$ and $\text{e}_{i,j}$ denotes the $I_i$-dimensional vector which elements are zeros except its $j$-th element equal to $1$.
\end{definition}
\end{definition}
Unfortunately, the asymptotic distributions of Z-eigenvalues and Z-singular values are not known as for the tensor eigenvalues and the tensor singular values.
We can however illustrate some of their properties in the study of tensor spiked models.

\subsection{Spiked symmetric random tensor: asymptotic properties}
Let us focus first on the symmetric spiked model. 
Consider an order-$N$ random symmetric tensor  $\pmb{\mathscr{W}}$ of size $I\times\dots\times I $  which follows a standard Gaussian distribution as specified in \eqref{realstandard}. Therefore its probability density is defined on the set of symmetric tensors through
$p(\pmb{\mathscr{W}}) \propto \exp(-\frac{1}{2}\|\pmb{\mathscr{W}}\|_F^2)$ where $\|.\|_F$ denotes the Frobenius norm.
Note that the considered symmetry means that its elements do not change under any permutation of its indices. Thus, the distinct entries of $\pmb{\mathscr{W}}$ do not have the same variance.
Indeed, similarly to \cite{de2022random}, we can expand $\|\pmb{\mathscr{W}}\|_F^2$ in the case $N=3$ as 
\begin{equation}
\|\pmb{\mathscr{W}}\|_F^2 = \sum_{i} \mathscr{W}_{iii}^2 + 3\sum_{i\neq j} \mathscr{W}_{iij}^2 + 6\sum_{i<j<k} \mathscr{W}_{ijk}^2.
\end{equation}
Consequently, the diagonal elements $\mathscr{W}_{iii}$ have variance $1$, the elements with two similar indices $\mathscr{W}_{iij}$ have variance $\frac13$ and  the elements with three distinct indices $\mathscr{W}_{ijk}$ have variance $\frac16$.

The rank-1 symmetric spiked model results in observing a rank-1 tensor perturbed by noise, i.e.
\begin{equation}\label{eq:symspike}
\pmb{\mathscr{A}} = \beta \myvec{x}^N + \frac{1}{\sqrt{I}}\pmb{\mathscr{W}},
\end{equation}
with $\beta$ a scalar parameter, $\myvec{x}$ uniformly spread on the unit-sphere and $\pmb{\mathscr{W}}$ a symmetric standard Gaussian tensor.
The main challenge regarding this model is then to recover the \textit{signal} parts $\beta$ and  $\myvec{x}$ while observing $\pmb{\mathscr{A}}$.
For this objective, let us define the $\ell_2$-loss associated with \eqref{eq:symspike} (which is proportional to the log-likelihood of the model) defined on $\mathbb{R}\times \mathbb{S}_{\mathbb{R}}^{I}$ as
\begin{equation}\label{eq:ls_symmetrical}
\ell_s(\lambda,\myvec{u}) := \|\pmb{\mathscr{A}} - \lambda \myvec{u}^N \|^2.
\end{equation}
Then, the Z-eigenvalues and Z-eigenvectors of $\pmb{\mathscr{A}}$ are the critical points of $\ell_s$. Indeed, the tuple $(\lambda, \myvec{u})$ for which the gradient of \eqref{eq:ls_symmetrical} is equal to zero satisfies \eqref{eq:defzeig}, as shown in \cite{limsingular}. We remark that a Z-eigenpair $(\lambda,\myvec{u})$ satisfies also $\pmb{\mathscr{A}}*_N\myvec{u}^{N}$.
Moreover, the Maximum Likelihood (ML) estimator is the global minimum of $\ell_s$. We can call therefore the ML estimator the dominating Z-eigenpair.

In \cite{jagannath20} the spin glass theory is employed in order to derive asymptotic properties (when $I\rightarrow\infty$) of the ML estimator exhibiting a so called Baik-Ben Arous-Péché (BBP) phase transition \cite{bbp05} depending on the \textit{power} level $\beta$.
Indeed, it is shown that there exists some $\beta_c$ such that for $\beta<\beta_c$ the correlation between the ML estimate $\myvec{u}_{ML}$ and $\myvec{x}$ is asymptotically zero when $I$ goes towards $\infty$ while for $\beta>\beta_c$, this correlation is positive.
Recently, Goulart et al. highlighted a different threshold $\beta_s<\beta_c$ (defined in an implicit manner) by considering in \cite{de2022random} the asymptotic behavior of the Z-eigenvectors (and not only the ML estimator). 
\begin{theorem}\cite[Th.3]{de2022random}\label{th:symmetrical_limit}
Let $\pmb{\mathscr{A}}$ be an order-$N$ symmetric tensor generated as in \eqref{eq:symspike} with $N\geq 3$. Fix $\lambda>0$ and suppose that $I\rightarrow \infty$. Assume  there exists a sequence\footnote{The sequence of Z-eigenvectors depends on $I$. We omit the dependency for the sake of brevity.} of Z-eigenvectors $\myvec{u}$ of $\pmb{\mathscr{A}}$ defined in \eqref{eq:symspike} such that
\begin{equation}
\begin{array}{l}
\myvec{x}^T\myvec{u} \xrightarrow{a.s} \alpha\\
\pmb{\mathscr{A}}*_N\myvec{u}^{N}\xrightarrow{a.s} \lambda
\end{array}
\end{equation}
for some $\alpha>0$ and some $\lambda$ such that $\lambda>(N-1)\beta_N$ with $\beta_N = \frac{2}{\sqrt{N(N-1)}}$. 
Then we have $\alpha = \omega_N(\lambda)$ with $\lambda$ being the solution of the fixed-point equation $\lambda = \phi_N(\lambda)$ where
\begin{equation}
\begin{array}{l}
\phi_N(z) = \beta\omega_N(z)^N - \frac{1}{N-1}m_{\beta_N}(\frac{z}{N-1}),\\
\omega_N(z) = \frac{1}{\beta^{\frac{1}{N-2}}}\Big(z + \frac{1}{N}m_{\beta_N}(\frac{z}{N-1})\Big)^{\frac{1}{N-2}},\\
m_{\beta}(z) = \frac{2}{\beta^2}\Big(-z+z\sqrt{1-\frac{\beta^2}{z^2}}\Big).
\end{array}
\end{equation}
\end{theorem}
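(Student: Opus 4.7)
The plan is to work from the fixed-point condition defining Z-eigenpairs of $\pmb{\mathscr{A}}$ and to reduce the asymptotic analysis to that of a single auxiliary symmetric random matrix. Substituting the model \eqref{eq:symspike} into the Z-eigenequation \eqref{eq:defzeig}, and using $\myvec{x}^N *_{N-1}\myvec{u}^{N-1} = (\myvec{x}^T\myvec{u})^{N-1}\myvec{x}$, yields the vector identity
$$\beta\alpha^{N-1}\myvec{x} + \tfrac{1}{\sqrt{I}}\,\pmb{\mathscr{W}}*_{N-1}\myvec{u}^{N-1} = \lambda\myvec{u},$$
with $\alpha=\myvec{x}^T\myvec{u}$. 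Taking inner products of this identity successively with $\myvec{u}$ and with $\myvec{x}$, and using $\|\myvec{u}\|=\|\myvec{x}\|=1$, produces the two scalar relations
$$\beta\alpha^{N} + \tfrac{1}{\sqrt{I}}\,\pmb{\mathscr{W}}*_{N}\myvec{u}^{N} = \lambda,$$
$$\beta\alpha^{N-1} + \tfrac{1}{\sqrt{I}}\,\pmb{\mathscr{W}}*_{N}(\myvec{x}\circ\myvec{u}^{N-1}) = \lambda\alpha.$$
All the remaining work is to identify the almost-sure limits of the two noise contractions as $I\to\infty$ and then to solve the resulting system for $(\alpha,\lambda)$.

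The central auxiliary object would be the symmetric $I\times I$ random matrix
$$\textbf{M} := \tfrac{1}{\sqrt{I}}\,\pmb{\mathscr{W}}*_{N-2}\myvec{u}^{N-2},$$
because then $\myvec{u}^T \textbf{M}\myvec{u} = \tfrac{1}{\sqrt{I}}\pmb{\mathscr{W}}*_{N}\myvec{u}^{N}$ and $\myvec{x}^T \textbf{M}\myvec{u} = \tfrac{1}{\sqrt{I}}\pmb{\mathscr{W}}*_{N}(\myvec{x}\circ\myvec{u}^{N-1})$, so both noise terms in the scalar identities become quadratic forms in $\textbf{M}$. If $\myvec{u}$ were independent of $\pmb{\mathscr{W}}$, the entries of $\textbf{M}$ would be linear Gaussian functionals of the symmetric tensor $\pmb{\mathscr{W}}$ and, after accounting for the variance pattern of a symmetric Gaussian tensor discussed just below \eqref{eq:symspike}, $\textbf{M}$ would be a rescaled Wigner matrix whose empirical spectrum converges to the semi-circle law of parameter $\beta_N = 2/\sqrt{N(N-1)}$, in line with Theorem~\ref{th:semicircle}. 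Its Stieltjes transform is exactly the function $m_{\beta_N}$ appearing in the statement, which is what connects the limit to the explicit formulas in $\omega_N$ and $\phi_N$.

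Granted the semi-circle convergence of $\textbf{M}$, and using the hypothesis $\lambda>(N-1)\beta_N$ to keep $\lambda/(N-1)$ strictly outside the asymptotic bulk, a resolvent analysis combined with Gaussian concentration of quadratic forms identifies the almost-sure limits of $\myvec{u}^T\textbf{M}\myvec{u}$ and $\myvec{x}^T\textbf{M}\myvec{u}$ as explicit expressions in $m_{\beta_N}(\lambda/(N-1))$, $\alpha$ and $\beta$. Inserting these limits into the two scalar identities and eliminating one variable gives the announced expression $\alpha=\omega_N(\lambda)$ together with the self-consistency equation $\lambda=\phi_N(\lambda)$. The main technical obstacle I expect is the statistical dependence between $\myvec{u}$ and $\pmb{\mathscr{W}}$, since $\myvec{u}$ is itself a critical point of a function of $\pmb{\mathscr{W}}$; the standard remedy is a leave-one-out or cavity decomposition that removes the components of $\pmb{\mathscr{W}}$ aligned with $\myvec{u}^{N-2}$ before invoking Wigner-type asymptotics, together with an $\epsilon$-net argument that makes quadratic-form concentration uniform over candidate eigenvectors. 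The threshold $\lambda>(N-1)\beta_N$ is precisely what keeps the resolvent of $\textbf{M}$ bounded and makes the fixed-point equation $\lambda=\phi_N(\lambda)$ well-posed.
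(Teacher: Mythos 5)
First, a point of reference: the paper does not prove Theorem~\ref{th:symmetrical_limit} at all --- it is imported verbatim from \cite[Th.~3]{de2022random} with no argument supplied --- so there is no internal proof to compare against; your proposal has to be judged against the strategy of the cited source. At the level of architecture, your sketch does match that strategy: contract the Z-eigenequation \eqref{eq:defzeig} with $\myvec{u}$ and with $\myvec{x}$ to get the two scalar relations, introduce the contracted matrix $\textbf{M}=\tfrac{1}{\sqrt{I}}\pmb{\mathscr{W}}*_{N-2}\myvec{u}^{N-2}$, use the variance bookkeeping of the symmetric Gaussian tensor (correctly giving semicircle parameter $\beta_N=2/\sqrt{N(N-1)}$ in the independent case), and close the system by identifying the limits of the two quadratic forms through a Stieltjes transform. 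Your algebra for the two scalar identities is correct, and the final formulas are consistent with limits $\tfrac{1}{\sqrt{I}}\pmb{\mathscr{W}}*_N\myvec{u}^N\to-\tfrac{1}{N-1}m_{\beta_N}(\tfrac{\lambda}{N-1})$ and $\tfrac{1}{\sqrt{I}}\pmb{\mathscr{W}}*_N(\myvec{x}\circ\myvec{u}^{N-1})\to-\tfrac{\alpha}{N}m_{\beta_N}(\tfrac{\lambda}{N-1})$.

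The genuine gap is in the step you defer to ``leave-one-out plus an $\epsilon$-net.'' The dependence of $\myvec{u}$ on $\pmb{\mathscr{W}}$ is not a removable nuisance here; it is the source of the factor $N-1$ in the argument of $m_{\beta_N}$. If $\myvec{u}$ were independent of $\pmb{\mathscr{W}}$, a standard BBP/resolvent computation for the rank-one perturbation $\beta\alpha^{N-2}\myvec{x}\circ\myvec{x}+\textbf{M}$ would evaluate the Stieltjes transform at $\lambda$ itself and would place the detectability edge at $\lambda>\beta_N$; the theorem instead evaluates at $\lambda/(N-1)$ and requires $\lambda>(N-1)\beta_N=2\sqrt{(N-1)/N}$, i.e.\ the effective bulk seen by the aligned eigenvector is $N-1$ times wider than the naive Wigner contraction suggests. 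Equivalently, $\tfrac{1}{N-1}m_{\beta_N}(\tfrac{z}{N-1})$ is the Stieltjes transform of a semicircle of radius $(N-1)\beta_N$, not $\beta_N$. This rescaling arises because perturbing $\pmb{\mathscr{W}}$ moves $\myvec{u}$, and the chain rule applied to $\myvec{u}^{N-1}$ in the fixed-point equation contributes $N-1$ copies of the response; in \cite{de2022random} this is captured by explicit Gaussian integration-by-parts (Stein-type) identities for the quadratic forms that track $\partial\myvec{u}/\partial\pmb{\mathscr{W}}$, and it is the technical heart of the proof. Your sketch asserts the correct final expressions but, as written, contains no mechanism that would produce the $1/(N-1)$ rescaling --- a reader executing your plan literally with an independence surrogate would land on the wrong threshold and the wrong fixed-point equation.
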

\begin{figure}
\centering
%
%
\begin{tikzpicture}
\scriptsize
\begin{axis}[%
width=3.021in,
height=2.066in,
at={(0.758in,0.481in)},
scale only axis,
xmin=0,
xmax=5,
xlabel=$\beta$,
ylabel=Alignment,
ymin=-0.01,
ymax=1.03,
axis background/.style={fill=white},
legend style={at={(0.435,0.582)}, anchor=south west, legend cell align=left, align=left, draw=white!15!black}
]
\addplot [color=black, line width=2.0pt]
  table[row sep=crcr]{%
0	0\\
0.128188948708806	0\\
0.256377897417611	0\\
0.384566846126417	0\\
0.512755794835223	0\\
0.640944743544029	0\\
0.769133692252834	0\\
0.89732264096164	0\\
1.02551158967045	0\\
1.15370053837925	0\\
};
\addlegendentry{Limit alignment $\alpha$}

\addplot [color=black, line width=1.5pt, forget plot]
  table[row sep=crcr]{%
1.15470053837925	0.707106781186548\\
1.25329796047209	0.833301374415569\\
1.35189538256493	0.87179473033197\\
1.45049280465777	0.895879024210809\\
1.54909022675061	0.912855737107416\\
1.64768764884345	0.925569335569384\\
1.74628507093629	0.935462715466659\\
1.84488249302913	0.943373808745919\\
1.94347991512197	0.949831859506455\\
2.04207733721481	0.955191059672913\\
2.14067475930765	0.959698783355541\\
2.23927218140049	0.963533598224634\\
2.33786960349333	0.966827858122106\\
2.43646702558617	0.969681823118591\\
2.53506444767901	0.97217284623884\\
2.63366186977185	0.974361550800712\\
2.73225929186469	0.976296100389152\\
2.83085671395753	0.97801522053828\\
2.92945413605037	0.979550380844452\\
3.02805155814321	0.980927398942944\\
3.12664898023605	0.982167638098631\\
3.22524640232889	0.983288913908639\\
3.32384382442172	0.984306189402217\\
3.42244124651456	0.985232113970115\\
3.5210386686074	0.986077445520062\\
3.61963609070024	0.986851384275536\\
3.71823351279308	0.987561838992562\\
3.81683093488592	0.988215640969286\\
3.91542835697876	0.988818717355135\\
4.0140257790716	0.989376232460992\\
4.11262320116444	0.989892703713459\\
4.21122062325728	0.990372097369974\\
4.30981804535012	0.990817907968511\\
4.40841546744296	0.991233224621692\\
4.5070128895358	0.991620786606665\\
4.60561031162864	0.991983030196077\\
4.70420773372148	0.992322128283784\\
4.80280515581432	0.992640024053521\\
4.90140257790716	0.99293845969907\\
5	0.993219001015157\\
};
\addplot [color=red, line width=1.0pt, only marks, mark=+, mark options={solid, fill=red, red}]
  table[row sep=crcr]{%
0	0.219986923687999\\
0.128188948708806	0.171897733093228\\
0.256377897417611	0.0777173147821717\\
0.384566846126417	0.169811262615709\\
0.512755794835223	0.349909644078729\\
0.640944743544029	0.227728637370758\\
0.769133692252834	0.170677045899541\\
0.89732264096164	0.076597955416401\\
1.02551158967045	0.0624241118256037\\
1.15370053837925	0.866771990067387\\
1.15470053837925	0.0302878092287078\\
1.25329796047209	0.246032819754287\\
1.35189538256493	0.666413946413072\\
1.45049280465777	0.878365764748175\\
1.54909022675061	0.931518667249871\\
1.64768764884345	0.913950722014577\\
1.74628507093629	0.940537196737079\\
1.84488249302913	0.905306400206858\\
1.94347991512197	0.891012877011108\\
2.04207733721481	0.980012379132985\\
2.14067475930765	0.97458715321593\\
2.23927218140049	0.973985194696286\\
2.33786960349333	0.982338294478058\\
2.43646702558617	0.974047710189451\\
2.53506444767901	0.975079251713513\\
2.63366186977185	0.983467850054279\\
2.73225929186469	0.985736613064466\\
2.83085671395753	0.982079606435315\\
2.92945413605037	0.972486532407369\\
3.02805155814321	0.981566511329575\\
3.12664898023605	0.992437262569028\\
3.22524640232889	0.982749010786624\\
3.32384382442172	0.984678788658857\\
3.42244124651456	0.989227180313509\\
3.5210386686074	0.983054757847039\\
3.61963609070024	0.988288637784845\\
3.71823351279308	0.988769679457254\\
3.81683093488592	0.992313988700653\\
3.91542835697876	0.991769656124261\\
4.0140257790716	0.994292034533498\\
4.11262320116444	0.990353728202791\\
4.21122062325728	0.989649032078625\\
4.30981804535012	0.9895271674888\\
4.40841546744296	0.991478725395378\\
4.5070128895358	0.99176123243856\\
4.60561031162864	0.992706812592279\\
4.70420773372148	0.991390907994095\\
4.80280515581432	0.988533383049842\\
4.90140257790716	0.994804426014695\\
5	0.995528204013999\\
};
\addlegendentry{Empirical alignment $x^Tu$}

\addplot [color=black, dashed, line width=1.0pt, forget plot]
  table[row sep=crcr]{%
1.15470053837925	0\\
1.15470053837925	1.1\\
};
\end{axis}
\end{tikzpicture}%
\caption{Illustration of limit and empirical alignments for a symmetric tensor of size $20\times 20\times 20$ with respect to $\beta$.}
\label{fig:symmetrical}
\end{figure}
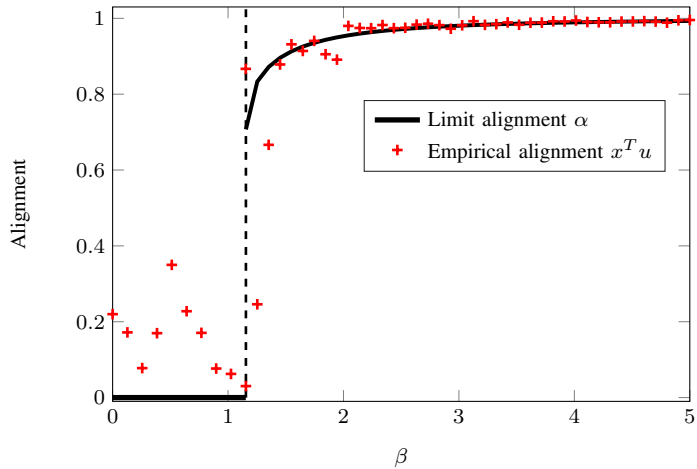

On the practical side, the classical way to numerically approximate a Z-eigenvector of $\pmb{\mathscr{A}}$ is to use the tensor power iteration initialized with some arbitrary vector $\myvec{x}_0$ and iterating until convergence \cite{delathauwer00}
\begin{equation}
\myvec{x}_{t+1} = \frac{\pmb{\mathscr{A}}*_{N-1}\myvec{x}_t^{N-1}}{\|\pmb{\mathscr{A}}*_{N-1}\myvec{x}_t^{N-1}\|}.
\end{equation}
We compare in Figure~\ref{fig:symmetrical} the empirical alignement $\myvec{x}^T\myvec{u}$ where $\myvec{u}$ is obtained with a tensor power iteration with the limit alignement $\alpha$ provided by Theorem~\ref{th:symmetrical_limit}. 
We observe in particular a phase transition at $\beta=\sqrt{\frac43}$.

\subsection{Spiked asymmetric random tensor: asymptotic properties}

Let us now focus on the asymmetric spiked model $\pmb{\mathscr{A}} \in \mathbb{R}^{I_1 \times \dots \times I_N}$ which results from observing a rank-1 asymmetric tensor perturbed by a Gaussian random tensor
\begin{equation}\label{eq:asymspike}
\pmb{\mathscr{A}} = \beta \myvec{x}_1\circ\dots\circ \myvec{x}_N + \frac{1}{\sqrt{\sum_{k=1}^N I_k}}\pmb{\mathscr{W}},
\end{equation}
with $\myvec{x}_1,\dots,\myvec{x}_N$ uniformly spread on the respective unit-spheres $\mathbb{S}_{\mathbb{R}}^{I_k}$ for $1\leq k\leq N$ and $\pmb{\mathscr{W}}$ a tensor with i.i.d standard Gaussian entries.
Note that the normalization by $\sqrt{\sum_{k=1}^N I_k}$ in \eqref{eq:asymspike} comes from the fact that the tensor spectral norm of $\pmb{\mathscr{W}}$ is upper bounded by $\sqrt{\sum_{k=1}^N I_k}$ (see \cite[Lemma~4]{seddik23}).
We recall that the spectral norm of $\pmb{\mathscr{A}}$ is the supremum of $\pmb{\mathscr{A}}*_N\left( \myvec{u}_1\circ\dots \circ \myvec{u}_N\right)$ for any $\myvec{u}_k\in\mathbb{S}_{\mathbb{R}}^{I_k}$ with $1\leq k\leq N$.
Similarly to the Z-eigenvalues and Z-eigenvectors in the symmetric case, we write the $\ell_2$-loss function associated with \eqref{eq:asymspike} (proportional to the log-likelihood of the model), defined on $\mathbb{R}\times \mathbb{S}_{\mathbb{R}}^{I_1}\times \dots\times \mathbb{S}_{\mathbb{R}}^{I_N}$, as
\begin{equation}\label{eq:ls_asymmetrical}
\ell_a(\lambda,\myvec{u}_1,\dots,\myvec{u}_N) = \|\pmb{\mathscr{A}} - \lambda \myvec{u}_1\circ\dots\circ \myvec{u}_N \|^2.
\end{equation}
Then, the Z-singular values and  Z-singular vectors of $\pmb{\mathscr{A}}$ are the critical points of $\ell_a$. Indeed, the tuple $(\lambda, \myvec{u}_1,\dots, \myvec{u}_N)$ for which the gradient of \eqref{eq:ls_asymmetrical} is equal to zero satisfies \eqref{eq:defzsing}, as shown in \cite{limsingular}.
Moreover, the Maximum Likelihood (ML) estimator is the global minimum of $\ell_a$. Hence, we can call the ML estimator the dominating Z-singular value and Z-singular vector. Recently, the asymptotic properties of sequences of Z-singular vectors associated with the rank-1 signal were provided by using similar RMT tools as in the symmetric case.
The result is presented in the following theorem.
\begin{theorem}\cite[Th.8]{seddik23}
Let $\pmb{\mathscr{A}}$ be an order-$N$ tensor generated as in \eqref{eq:asymspike} with $N\geq 3$.
Suppose that $I_1,\dots,I_N\rightarrow\infty$ such that $\frac{I_k}{\sum_{\ell=1}^N I_\ell}\rightarrow c_k\in (0,1)$ and suppose that there exists a sequence of critical points $\myvec{u}_1,\dots,\myvec{u}_N$ of \eqref{eq:ls_asymmetrical} such that
\begin{equation}
\begin{array}{l}
\myvec{x}_k^T\myvec{u}_k \xrightarrow{a.s} \alpha_k \text{\quad for $1\leq k\leq N$}\\
\pmb{\mathscr{A}}*_N\left( \myvec{u}_1\circ\dots \circ \myvec{u}_N\right)\xrightarrow{a.s} \lambda
\end{array}
\end{equation}
for some $\alpha_1,\dots,\alpha_N>0$ and some $\lambda>0$.
Define $(g_k(z))_{1\leq k\leq N}$ as the solution of the set of equations $g_k(z)^2- \Big(\sum_{j=1}^N g_j(z) +z\Big)g_k(z) - c_k=0 $ for $1\leq k\leq N$ and $\mathcal{S}$ the set of $z$ such that $(g_k(z))_{1\leq k\leq N}$ exist.
Assume that $\lambda\in\mathcal{S}$.

Then, we have $\alpha_k=q_k(\lambda)$ for $1\leq k\leq N$ where $\lambda$ is the solution of $\lambda + \sum_{k=1}^N g_k(\lambda) - \beta\prod_{k=1}^N q_k(\lambda) = 0$ with
\begin{equation}
q_k(z) = \sqrt{1-\frac{g_k(z)^2}{c_k}} \text{\quad for $1\leq k\leq N$}.
\end{equation}
\end{theorem}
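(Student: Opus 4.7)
The plan is to combine the variational characterization of Z-singular vectors with a Random Matrix Theory (RMT) analysis of the random tensor contractions that appear in the critical point equations. The critical point equations \eqref{eq:defzsing} applied to the spiked model \eqref{eq:asymspike} read, for each mode $i=1,\dots,N$,
\begin{equation}\label{eq:critproof}
\beta\Big(\prod_{k\neq i}\myvec{x}_k^T\myvec{u}_k\Big)\myvec{x}_i \;+\; \frac{1}{\sqrt{\sum_\ell I_\ell}}\,\pmb{\mathscr{W}}*_{N-1}\Big(\myvec{u}_1\circ\dots\circ\myvec{u}_{i-1}\circ\myvec{u}_{i+1}\circ\dots\circ\myvec{u}_N\Big) \;=\; \lambda\myvec{u}_i .
\end{equation}
Projecting \eqref{eq:critproof} first on $\myvec{u}_i$ and then on $\myvec{x}_i$, and using the hypothesized convergences $\myvec{x}_k^T\myvec{u}_k\to \alpha_k$ and $\pmb{\mathscr{A}}*_N(\myvec{u}_1\circ\dots\circ\myvec{u}_N)\to\lambda$, yields two scalar relations per mode linking $\lambda$, $\alpha_i$, and the ``noise alignments'' $\myvec{u}_i^T\myvec{z}_i$ and $\myvec{x}_i^T\myvec{z}_i$, where $\myvec{z}_i$ denotes the noise contraction appearing in \eqref{eq:critproof}.

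The heart of the proof is then to compute the almost-sure limits of those noise alignments. My plan is to proceed by a linearization (sometimes called Hermitization) of the tensor contraction problem into a block matrix of size $\sum_\ell I_\ell \times \sum_\ell I_\ell$, along the lines of the companion matrix used in \cite{seddik23}. Concretely, I would stack the $N$ critical point equations into a single block-linear system with unknown $(\myvec{u}_1,\dots,\myvec{u}_N)$ and noise blocks built from partial contractions of $\pmb{\mathscr{W}}$. This reduces the asymptotic analysis to studying the resolvent $(\myvec{M}(\pmb{\mathscr{W}})-z\myvec{I})^{-1}$ of that block matrix, which is amenable to standard RMT tools (Gaussian concentration, Stein's lemma, and the Lindeberg/interpolation method) because $\pmb{\mathscr{W}}$ has i.i.d.\ Gaussian entries.

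I would then apply a Schur-complement identity to the resolvent to express the scalar noise alignments in terms of per-block normalized traces $g_k(z):=\tfrac{1}{I_\Sigma}\tr[\text{block}_k(\myvec{M}-z\myvec{I})^{-1}]$, where $I_\Sigma=\sum_\ell I_\ell$. A standard fixed-point argument (deterministic equivalent) shows that $g_k(z)$ concentrates around a deterministic function satisfying the quadratic self-consistent equation
\begin{equation*}
g_k(z)^2 - \Big(\sum_{j=1}^N g_j(z) + z\Big)g_k(z) - c_k = 0,
\end{equation*}
which is exactly the equation stated in the theorem. Substituting these limits back into the scalar relations obtained from \eqref{eq:critproof} produces $\alpha_k=q_k(\lambda):=\sqrt{1-g_k(\lambda)^2/c_k}$ and, combining with $\lambda=\beta\prod_k\alpha_k+\tfrac{1}{\sqrt{I_\Sigma}}\myvec{u}_i^T\myvec{z}_i$ after summation over $i$, the final fixed-point equation $\lambda+\sum_k g_k(\lambda)-\beta\prod_k q_k(\lambda)=0$.

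The main obstacle is precisely the step where one replaces the random traces by their deterministic equivalents, because the vectors $\myvec{u}_k$ are themselves functions of $\pmb{\mathscr{W}}$: a naive application of Gaussian concentration is not valid. The standard remedy, which I would use, is to first prove the result conditional on an arbitrary deterministic family of unit vectors (so Gaussianity of $\pmb{\mathscr{W}}$ decouples from the contraction directions), and then remove the conditioning by a net/epsilon-argument on the compact product of spheres, exploiting the Lipschitz dependence of the relevant quantities on $(\myvec{u}_1,\dots,\myvec{u}_N)$. Verifying that $\lambda\in\mathcal{S}$ (the regime where $g_k$ is well-defined) and uniqueness of the solution of the self-consistent equations would close the argument.
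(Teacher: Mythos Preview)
The paper does not supply its own proof of this theorem: it is quoted verbatim as \cite[Th.~8]{seddik23} and the text moves directly to the Conclusions section afterwards. So there is nothing in the present paper to compare your attempt against; the authors are simply importing the result from Seddik et al.

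That said, your sketch is broadly faithful to the methodology of \cite{seddik23} (and its symmetric predecessor \cite{de2022random}): expand the critical-point equations for the spiked model, project on $\myvec{u}_i$ and $\myvec{x}_i$ to obtain scalar relations, linearize the coupled system into a block matrix of size $\sum_\ell I_\ell$, and study the block-resolvent traces via deterministic equivalents, which yields precisely the quadratic self-consistent equations for $g_k$ and hence the formulas $\alpha_k=q_k(\lambda)$ and the fixed-point equation for $\lambda$. The only place I would flag is your proposed remedy for the dependence between $(\myvec{u}_1,\dots,\myvec{u}_N)$ and $\pmb{\mathscr{W}}$: an $\epsilon$-net over the product of spheres has cardinality exponential in $\sum_\ell I_\ell$, so to union-bound you need concentration of the relevant functionals at rate $e^{-c\sum_\ell I_\ell}$, not just the usual RMT fluctuations. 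In \cite{seddik23} this coupling is handled more directly through the structure of the resolvent of the linearized block matrix (the critical point equations themselves place $(\myvec{u}_k)$ in the kernel of a shifted block operator, so one works with the resolvent at the random point $\lambda$ rather than conditioning on fixed directions). Your route can be made to work, but the net argument needs to be paired with a sufficiently strong concentration estimate, which is the genuinely delicate step.
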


\section{Conclusions}\label{sec8}
Starting with a brief review of tensor algebra, this paper considered the subject of  random tensors in an attempt to provide a centralized reference for this topic. The complete probabilistic characterization of complex random tensors was presented in terms of the joint distribution of tensor components. For this purpose we considered the composite real representation of complex tensors, as well as the augmented complex representation involving the tensor itself and its complex conjugate.The second order characterization of order $N$ complex tensors, both proper and improper, was presented in terms of covariance and pseudo-covariance being expressed as order $2N$ tensors. In particular, we presented various structures where correlation can span across a subset of modes thereby studying the separability of correlation across some or all modes. This study culminated with the presentation of the Kronecker tensor correlation model which is an extension of the corresponding matrix model commonly used in the area of Multi-Input Multi-Output (MIMO) wireless communication systems. Next  complex Gaussian tensors were presented where the joint PDF function of the components was expressed using the Einstein product while accounting for both covariance and pseudo-covariance tensors. Furthermore, we considered random tensor processes, and their transformation by a tensor multi-linear time invariant system. The eigenvalues and singular values of Hermitian tensors were considered, along with their asymptotic probabilistic characterization in terms of the semi-circular distribution for eigenvalues and Marcenko-Pastur distribution for singular values, using tools from Random Matrix Theory.  Finally, the spiked symmetric and asymmetric real Gaussian tensor models were presented along with corresponding asymptotics. The phase transition phenomenon for the power level of such models was exposed. Such properties so far have been explored only for real tensors, and their extension to the complex case is a topic for future research.

\appendix 
\section*{Proof of Lemma \ref{lemma1}}
We prove Lemma \ref{lemma1} using the characteristic function of Gaussian tensors.

Consider the channel tensor $\pmb{\mathscr{H}} \in \mathbb{C}^{J_1 \times \dots \times J_M \times I_1 \times \dots \times I_N }$ whose entries are zero mean jointly complex Gaussian. If the correlation tensor of the channel is given by $\mathscr{R} \in \mathbb{C}^{J_1 \times \dots \times J_M \times I_1 \times \dots \times I_N \times J_1 \times \dots \times J_M \times I_1 \times \dots \times I_N}$, the characteristic function of such a tensor channel is given as \cite{MMSEJournal}:
\begin{equation}\label{channelCFtensor}
\Phi_{\pmb{\mathscr{H}}}(\mathscr{W}) = \exp \bigg\{ - \dfrac{1}{4} \Big(\mathscr{W}^* *_{N+M} \mathscr{R} *_{N+M} \mathscr{W} \Big) \bigg\}
\end{equation}
where the tensor $\mathscr{W}$ is of same size as $\pmb{\mathscr{H}}$. 

Finding joint second order moments using the characteristic function relies on complex partial derivatives, which we define next. The complex derivative of a scalar function $f$ is defined as \cite{VerduGrad}: 
\begin{align}
\dfrac{\partial f}{\partial x^*} &= \dfrac{1}{2}\Big( \dfrac{\partial f}{\partial \Re(x)} + j \dfrac{\partial f}{\partial \Im(x)} \Big) \\
\dfrac{\partial f}{\partial x} &= \dfrac{1}{2}\Big( \dfrac{\partial f}{\partial \Re(x)} - j \dfrac{\partial f}{\partial \Im(x)} \Big)
\end{align}
Thus, we have $\partial x/\partial x =1$ and $\partial x/\partial x^* =\partial x^*/\partial x =0$ \cite{HaykinAdaptiveFilterTheory}. Extending the definition of complex gradient vector from \cite{HaykinAdaptiveFilterTheory} to tensors, we define the first derivative of a scalar function $f$ with respect to an order-$N$ complex tensor $\mathscr{X} \in \mathbb{C}^{I_1 \times \dots \times I_N}$ as:
\begin{equation}\label{firstordeDer}
[\nabla_{\mathscr{X}} f]_{i_1,\dots,i_N} = \dfrac{\partial f}{\partial \Re(\mathscr{X}_{i_1,\dots,i_N})} + j \dfrac{\partial f}{\partial \Im(\mathscr{X}_{i_1,\dots,i_N})}  =2 \cdot \dfrac{\partial f}{\partial \mathscr{X}^*_{i_1,\dots,i_N}}.
\end{equation}
Note that $\nabla_{\mathscr{X}} f$ is an order-$N$ tensor where its each element is specified by \eqref{firstordeDer}.  Further, the second derivative of the function $f$ with respect to the order-$N$ tensor $\mathscr{X}$ is described using an order $2N$ tensor where each element is derived by taking the derivative of \eqref{firstordeDer} with respect to $\mathscr{X}_{i_1',\dots,i_N'}$  as:
\begin{equation}\label{secondorderDer}
[\nabla^2_{\mathscr{X},\mathscr{X}^*} f]_{i_1,\dots,i_N,i_1',\dots,i_N'} = 4 \cdot \dfrac{\partial^2 f}{\partial \mathscr{X}^*_{i_1,\dots,i_N} \partial \mathscr{X}_{i_1',\dots,i_N'}}
\end{equation}

Second derivative of characteristic functions can be used to find the joint moments. Thus the joint moment between two elements of the channel tensor can be found using the second derivative of the channel's characteristic function $\Phi_{\pmb{\mathscr{H}}}(\mathscr{W})$ being evaluated at an all zero tensor, $0_{\mathscr{T}}$. Hence, the joint moment is written as:
\begin{align}\label{charfunjwot}
\mathbb{E}[\pmb{\mathscr{H}}_{j_1,\dots,j_M,i_1,\dots,i_N} \cdot \pmb{\mathscr{H}}_{j_1',\dots,j_M',i_1',\dots,i_N'}^* ] = - [\nabla^2_{\mathscr{W}, \mathscr{W}^*} \Phi_{\pmb{\mathscr{H}}}]_{j_1,\dots,j_M,i_1,\dots,i_N,j_1',\dots,j_M',i_1',\dots,i_N'} \Bigg\rvert_{\mathscr{W}=0_{\mathscr{T}}}.
\end{align} 

For simplicity of notation, let $\theta = \Big(\mathscr{W}^* *_{N+M} \mathscr{R} *_{N+M} \mathscr{W} \Big)$, then \eqref{channelCFtensor} becomes $\Phi_{\pmb{\mathscr{H}}}(\mathscr{W}) = \exp(-\theta/4)$. Upon expanding the Einstein product, $\theta $ can be written as:
\begin{equation}\label{thetaeqw}
\theta = \sum_{j_1,\dots,j_M,i_1,\dots,i_N,j_1',\dots,j_M',i_1',\dots,i_N'} \mathscr{W}^*_{j_1,\dots,j_M,i_1,\dots,i_N} \mathscr{R}_{j_1,\dots,j_M,i_1,\dots,i_N,j_1',\dots,j_M',i_1',\dots,i_N'} \mathscr{W}_{j_1',\dots,j_M',i_1',\dots,i_N'}
\end{equation}
On substituting the tensor $\mathscr{R}$ from \eqref{SepcortenTh} into \eqref{thetaeqw}, we get:
\begin{equation} 
\theta=\sum_{j_1,\dots,j_M,i_1,\dots,i_N,j_1',\dots,j_M',i_1',\dots,i_N'} \mathscr{W}^*_{j_1,\dots,j_M,i_1,\dots,i_N} (\mathscr{G}_{R})_{j_1,\dots,j_M,j_1',\dots,j_M'} \cdot (\mathscr{G}_{T})_{ i_1,\dots,i_N,i_1',\dots,i_N'} \mathscr{W}_{j_1',\dots,j_M',i_1',\dots,i_N'} \label{Theta2eq}
\end{equation}
Using \eqref{firstordeDer}, the first derivative of $\Phi_{\pmb{\mathscr{H}}}(\mathscr{W})$ is given as:
\begin{equation}\label{nambWerh}
[\nabla_{\mathscr{W}^*} \Phi_{\pmb{\mathscr{H}}}]_{j_1',\dots,j_M',i_1',\dots,i_N'} = 2 \cdot \dfrac{\partial \Phi_{\pmb{\mathscr{H}}}}{\partial \mathscr{W}_{j_1',\dots,j_M',i_1',\dots,i_N'} }.
\end{equation}
Since $\Phi_{\pmb{\mathscr{H}}}(\mathscr{W})= \exp(-\theta/4)$, using chain rule of derivatives in \eqref{nambWerh} we get:
\begin{equation}\label{nablWHji}
[\nabla_{\mathscr{W}^*} \Phi_{\pmb{\mathscr{H}}}]_{j_1',\dots,j_M',i_1',\dots,i_N'} =-\dfrac{1}{2} \exp(-\theta/4) \dfrac{\partial \theta}{ \partial \mathscr{W}_{j_1',\dots,j_M',i_1',\dots,i_N'}}= -\dfrac{1}{2} \Phi_{\mathscr{H}}\dfrac{\partial \theta}{ \partial \mathscr{W}_{j_1',\dots,j_M',i_1',\dots,i_N'}} 
\end{equation}
Further, using \eqref{secondorderDer} the second derivative is given as:
\begin{equation}\label{nablWWconj}
[\nabla^2_{\mathscr{W}, \mathscr{W}^*} \Phi_{\pmb{\mathscr{H}}}]_{j_1,\dots,j_M,i_1,\dots,i_N,j_1',\dots,j_M',i_1',\dots,i_N'} = 4 \cdot \dfrac{\partial ^2 \Phi_{\pmb{\mathscr{H}}}(\mathscr{W})}{\partial \mathscr{W}_{j_1,\dots,j_M,i_1,\dots,i_N}^* \partial \mathscr{W}_{j_1',\dots,j_M',i_1',\dots,i_N'} } 
\end{equation}
We can further expand \eqref{nablWWconj} by applying product rule of derivatives on \eqref{nablWHji} to get: 
\begin{align}
[\nabla^2_{\mathscr{W}, \mathscr{W}^*} \Phi_{\pmb{\mathscr{H}}}]_{j_1,\dots,j_M,i_1,\dots,i_N,j_1',\dots,j_M',i_1',\dots,i_N'} &= \underbrace{ \Phi_{\mathscr{H}}\Big( \dfrac{\partial \theta}  {4 \cdot \partial \mathscr{W}_{j_1',\dots,j_M',i_1',\dots,i_N'}} \Big)\cdot \Big( \dfrac{\partial \theta}  {4 \cdot \partial \mathscr{W}^*_{j_1',\dots,j_M',i_1',\dots,i_N'}} \Big)}_{\zeta} \nonumber \\ &- \Phi_{\mathscr{H}} \dfrac{\partial^2 \theta}{\partial \mathscr{W}_{j_1,\dots,j_M,i_1,\dots,i_N}^* \partial \mathscr{W}_{j_1',\dots,j_M',i_1',\dots,i_N'}} \label{ParDev2nd}
\end{align}
Also, on taking the double derivative of \eqref{Theta2eq} we get: 
\begin{equation}
\dfrac{\partial^2 \theta}{\partial \mathscr{W}_{j_1,\dots,j_M,i_1,\dots,i_N}^* \partial \mathscr{W}_{j_1',\dots,j_M',i_1',\dots,i_N'}} = (\mathscr{G}_{R})_{j_1,\dots,j_M,j_1',\dots,j_M'} \cdot (\mathscr{G}_{T})_{ i_1,\dots,i_N,i_1',\dots,i_N'}  
\end{equation}
where $\mathscr{G}_R$ and $\mathscr{G}_T$ are as defined in \eqref{SepcortenTh}. Also $\zeta$ in \eqref{ParDev2nd} when evaluated at $\mathscr{W}=0_{\mathscr{T}}$ gives $0$, and $\Phi_{\pmb{\mathscr{H}}}$ when evaluated at $\mathscr{W}=0_{\mathscr{T}}$ gives $1$. Hence \eqref{ParDev2nd} when evaluated at  $\mathscr{W}=0_{\mathscr{T}}$ gives
\begin{align}\label{GrGtresNabla}
[\nabla^2_{\mathscr{W}, \mathscr{W}^*} \Phi_{\pmb{\mathscr{H}}}]_{j_1,\dots,j_M,i_1,\dots,i_N,j_1',\dots,j_M',i_1',\dots,i_N'} \Big\rvert_{\mathscr{W}=0_{\mathscr{T}}} =  - (\mathscr{G}_{R})_{j_1,\dots,j_M,j_1',\dots,j_M'} \cdot (\mathscr{G}_{T})_{ i_1,\dots,i_N,i_1',\dots,i_N'}  
\end{align}
Based on the definition of the transmit correlation tensor from \eqref{G_T_3}, the pseudo-diagonal elements of the transmit correlation tensor for any receive element would be
\begin{equation}
(\mathscr{G}_{T}^{(\bar{j}_1,\dots,\bar{j}_M)})_{i_1,\dots,i_N,i_1,\dots,i_N} = \mathbb{E}[|\pmb{\mathscr{H}}_{\bar{j}_1,\dots,\bar{j}_M,i_1,\dots,i_N}|^2 ] = 1
\end{equation} 
Similarly the pseudo-diagonal elements of the receive correlation tensor from \eqref{G_R_3} for any transmit element would be
\begin{equation}
(\mathscr{G}_{R}^{(\bar{i}_1,\dots,\bar{i}_N)})_{j_1,\dots,j_M,j_1,\dots,j_M} = \mathbb{E}[|\pmb{\mathscr{H}}_{j_1,\dots,j_M,\bar{i}_1,\dots,\bar{i}_N}|^2 ] = 1
\end{equation}
Hence the pseudo-diagonal elements of both the transmit and receive correlation tensors do not depend on $(\bar{i}_1,\dots,\bar{i}_N)$ and $(\bar{j}_1,\dots,\bar{j}_M)$ respectively. Now we look at the off pseudo-diagonal elements. For the transmit correlation tensor we have
\begin{align}\label{gtdots}
(\mathscr{G}_{T}^{(\bar{j}_1,\dots,\bar{j}_M)})_{i_1,\dots,i_N,i_1',\dots,i_N'} &= \mathbb{E}[\pmb{\mathscr{H}}_{\bar{j}_1,\dots,\bar{j}_M,i_1,\dots,i_N} \cdot \pmb{\mathscr{H}}_{\bar{j}_1,\dots,\bar{j}_M,i_1',\dots,i_N'}^* ]. 
\end{align}
From \eqref{charfunjwot} and \eqref{gtdots}, we get: 
\begin{align}\label{GTgotsjM}
(\mathscr{G}_{T}^{(\bar{j}_1,\dots,\bar{j}_M)})_{i_1,\dots,i_N,i_1',\dots,i_N'} =  - [\nabla^2_{\mathscr{W}, \mathscr{W}^*} \Phi_{\pmb{\mathscr{H}}}]_{\bar{j}_1,\dots,\bar{j}_M,i_1,\dots,i_N,\bar{j}_1,\dots,\bar{j}_M,i_1',\dots,i_N'} \Big\rvert_{\mathscr{W}=0_{\mathscr{T}}}. 
\end{align}
From \eqref{GrGtresNabla} and \eqref{GTgotsjM}, we get: 
\begin{align}
(\mathscr{G}_{T}^{(\bar{j}_1,\dots,\bar{j}_M)})_{i_1,\dots,i_N,i_1',\dots,i_N'} = \underbrace{(\mathscr{G}_{R})_{\bar{j}_1,\dots,\bar{j}_M,\bar{j}_1,\dots,\bar{j}_M}}_{\text{pseudo-diagonal elements}} \cdot (\mathscr{G}_{T})_{ i_1,\dots,i_N,i_1',\dots,i_N'},
\end{align}
where the pseudo-diagonal elements of $\mathscr{G}_R$ are 1 for any $(\bar{j}_1,\dots,\bar{j}_M)$. Hence we see that the elements of transmit correlation tensor do not depend on the receive element indices $(\bar{j}_1,\dots,\bar{j}_M)$  i.e. the transmit correlation tensor is uniform across all the receive elements. Now we look at the off pseudo-diagonal elements of the receive correlation tensor, where we have
\begin{align}\label{Grbari}
(\mathscr{G}_{R}^{(\bar{i}_1,\dots,\bar{i}_N)})_{j_1,\dots,j_M,j_1',\dots,j_M'} = \mathbb{E}[\pmb{\mathscr{H}}_{j_1,\dots,j_M,\bar{i}_1,\dots,\bar{i}_N} \cdot \pmb{\mathscr{H}}_{j_1',\dots,j_M',\bar{i}_1,\dots,\bar{i}_N}^* ]
\end{align}
From \eqref{charfunjwot} and \eqref{Grbari}, we get
\begin{align}\label{GRbari2}
(\mathscr{G}_{R}^{(\bar{i}_1,\dots,\bar{i}_N)})_{j_1,\dots,j_M,j_1',\dots,j_M'} =  - [\nabla^2_{\mathscr{W}, \mathscr{W}^*} \Phi_{\pmb{\mathscr{H}}}]_{j_1,\dots,j_M,\bar{i}_1,\dots,\bar{i}_N,j_1',\dots,j_M',\bar{i}_1,\dots,\bar{i}_N} \Big\rvert_{\mathscr{W}=0_{\mathscr{T}}}.
\end{align}
Further, from \eqref{GrGtresNabla} and \eqref{GRbari2}, we get
\begin{align}
(\mathscr{G}_{R}^{(\bar{i}_1,\dots,\bar{i}_N)})_{j_1,\dots,j_M,j_1',\dots,j_M'} = (\mathscr{G}_{R})_{j_1,\dots,j_M,j_1',\dots,j_M'} \cdot \underbrace{(\mathscr{G}_{T})_{ \bar{i}_1,\dots,\bar{i}_N,\bar{i}_1,\dots,\bar{i}_N}}_{\text{pseudo-diagonal elements}},
\end{align}
where the pseudo-diagonal elements of $\mathscr{G}_T$ are 1 for any $(\bar{i}_1,\dots,\bar{i}_N)$. Hence we see that the receive correlation tensor elements do not depend on the transmit element indices $(\bar{i}_1,\dots,\bar{i}_N)$  i.e. the receive correlation tensor is uniform across all the transmit elements.   

\end{spacing}

\bibliography{AllRef}
\bibliographystyle{IEEEtran}

\end{document}